\newtheorem{theorem}{Theorem}[section]
\newtheorem{lemma}[theorem]{Lemma}
\newtheorem{proposition}[theorem]{Proposition}
\newtheorem{conjecture}[theorem]{Conjecture}
\theoremstyle{definition}
\newtheorem{definition}[theorem]{Definition}
\newtheorem{remark}[theorem]{Remark}
\numberwithin{equation}{section}
\renewcommand\bigskip{\medskip}
\def\to{\rightarrow}
\def\wto{\rightharpoonup}
\def\D{\mathcal{D}}
\def\cP{\mathcal P}
\def\F{\mathcal F}
\def\N{\mathbb N}
\def\R{\mathbb R}
\def\Z{\mathbb Z}
\DeclareMathOperator{\sd}{{\rm{s}\textrm{-}\rm{dim}}}
\DeclareMathOperator{\dimH}{dim_H}
\DeclareMathOperator{\dimP}{dim_P}
\DeclareMathOperator{\spt}{spt}
\DeclareMathOperator{\ess-inf}{ess-inf}
\def\l{\left}
\def\r{\right}
\DeclareMathOperator{\dist}{{\rm dist}}
\begin{document}

\title[Projection theorems with countably many exceptions ]{Projection theorems with countably many exceptions and applications to the exact overlaps conjecture}

\author[Meng Wu]{Meng Wu}
\address{School of mathematics,  Hunan University,  Changsha,  410082, China; Department of Mathematical Sciences, P.O. Box 3000, 90014 University of Oulu, Finland}
\email{meng.wu@hnu.edu.cn}


\subjclass[2010]{}
\keywords{}

\begin{abstract}

We establish several optimal estimates for exceptional parameters in the projection of fractal measures:
(1)	For a parametric family of self-similar measures satisfying a transversality condition, the set of parameters leading to a dimension drop is at most countable.
(2)	For any ergodic CP-distribution $Q$  on $\R^2$, the Hausdorff dimension of its orthogonal projection is $\min\{1, \dim Q\}$ in all but at most countably many directions.
Applications of our projection results  include:
(i) For any planar Borel probability measure with uniform entropy dimension $\alpha$, the packing dimension of its orthogonal projection is at least $\min\{1, \alpha\}$  in all but at most countably many directions.
(ii) For any planar set $F$, the Assouad dimension of its orthogonal projection is at least $ \min\{1, \dim_{\rm A} F\} $  in all but at most countably many directions.
\end{abstract}

\maketitle

\section{Introduction}
\subsection{Background and main results}
In the present paper,  we propose to resharpen and extend the classical  {\em Marstrand projection theorem} for a class of fractal measures possessing  certain geometric regularities.  Marstrand's projection theorem \cite{Marstrand1954} is a fundamental result in fractal geometry and geometric measure theory.  It states that for every Borel set $K$ in the plane,  the   Hausdorff dimension of the   orthogonal projection of  $K$ along a  generic direction  is preserved: for Lebesgue almost every $\pi\in G(2,1)$,
\begin{equation}\label{eq:1}
\dim_{\rm H} \pi  K = \min (1,\dim_{\rm H}K).
\end{equation}
Here $G(2,1)$ is the set of one-dimensional linear subspaces of $\R^2$,  $\pi $ stands for the orthogonal projection from $\mathbb{R}^2$ onto $\pi$  and $\dim_{\rm H}$ denotes the Hausdorff dimension.  This theorem has been quite influential,  its various variants or extensions often play key roles in many problems in fractal geometry,  geometric measure theory and dynamical systems (see e.g. \cite{FFJ,BHR,Shmerkin2015,HS2012,HS2015,BFLM}).   We note that there have been  recent major advances concerning sharpening of Marstrand's projection for general Borel sets (see e.g.  \cite{Orponen-Shmerkin-2023,Ren-Wang}). 

Let us call a direction $\pi\in G(2,1)$ exceptional  (for orthogonal projections of $K$)  if the equality  \eqref{eq:1} doesn't hold.  
Marstrand tells us that  the exceptional  directions form a set of zero Lebesgue measure.  Sometimes,  it is possible to give better estimates on the size of the exceptional set (in terms of Hausdorff dimension), but in general it is almost impossible to explicitly determine the exceptional directions.  In many applications of Marstrand's projection theorem,  however,  it is usually the estimate on the size of the exceptional set that really matters.   
In fact, there is a great desire to strengthen this classical result by providing classes of sets where there are no,  or very few,  exceptional directions.   While for general fractal sets,  explicitly determining the exceptional directions is intractable,  there is a widely accepted philosophy for sets with structures:

\medskip

{\em 
{\bf General Philosophy (rigidity phenomena).} For fractal sets with regular arithmetical or geometrical structures,  the exceptional set (of projections) should be very small and could only be caused by some evident algebraic or combinatorial reasons.
}
\medskip

In the world of fractal sets,    self-similar sets are among  the simplest and most fundamental class of objects, they possess very rigid  geometric structures.  A self-similar set has the property that it can be decomposed into parts which are exact replicas of the whole.  Specific to self-similar sets,   we have the following 
\begin{conjecture}\label{conj:proj-self-similar:1}
For any self-similar set  in the plane, the exceptional set  is at most
countable and explicitly determinable.
\end{conjecture}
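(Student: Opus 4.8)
\medskip

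\noindent\textbf{Plan of proof.}\quad The strategy is to reduce Conjecture~\ref{conj:proj-self-similar:1} to the projection theorem for ergodic planar CP-distributions via the classical passage from self-similar sets to self-similar measures, and then to pin down the exceptional directions by unwinding that theorem in the self-similar setting. Write the defining IFS of $K$ as $\Phi=\{x\mapsto\rho_iO_ix+t_i\}_{i\in\Lambda}$ with $\rho_i\in(0,1)$, $O_i\in O(2)$, $t_i\in\R^2$. If $K$ lies on a line $\ell$, then $\pi_\theta|_K$ is bi-Lipschitz for every $\theta\ne\ell^{\perp}$, so $\ell^{\perp}$ is the only possible exception and we are done; hence assume $K$ spans $\R^2$. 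Using the standard fact that $\dim_{\rm H}K=\sup\{\dim_{\rm H}\mu:\mu\text{ a self-similar measure on }K\}$, choose self-similar measures $\mu_n$ on $K$ with $\dim_{\rm H}\mu_n\to\dim_{\rm H}K$. Since a countable union of at most countable sets is at most countable, since $\dim_{\rm H}\pi_\theta K\ge\dim_{\rm H}\pi_\theta\mu_n$ for each $n$, and since $\dim_{\rm H}\pi_\theta K\le\min\{1,\dim_{\rm H}K\}$ trivially, it suffices to prove: for every planar self-similar measure $\mu$ there is an at most countable set $E(\mu)\subset G(2,1)$, determined explicitly by $\Phi$, with $\dim_{\rm H}\pi_\theta\mu=\min\{1,\dim_{\rm H}\mu\}$ for all $\theta\notin E(\mu)$.

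For the ``at most countable'' half, I would invoke the CP-distribution machinery. Every self-similar measure on $\R^2$ is uniformly scaling and generates an ergodic CP-distribution $Q$ with $\dim Q=\dim_{\rm H}\mu$; crucially this needs no separation hypothesis, matching the generality of the conjecture. The method of local entropy averages gives $\dim_{\rm H}\pi_\theta\mu\ge\dim\pi_\theta Q$ for \emph{every} $\theta$, while $\dim\pi_\theta Q\le\min\{1,\dim Q\}$ always; hence the required equality holds whenever $\dim\pi_\theta Q=\min\{1,\dim Q\}$, and the set of $\theta$ where this fails is at most countable by the projection theorem for ergodic planar CP-distributions.

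For explicit determinability, I would unwind the CP argument. If the group $\langle O_i:i\in\Lambda\rangle$ is infinite, one recovers the known fact that there are \emph{no} exceptions. Otherwise this group is finite, $E(\mu)$ is invariant under it, and a dimension drop $\dim_{\rm H}\pi_\theta\mu<\min\{1,\dim_{\rm H}\mu\}$ should be forced by an arithmetic resonance of the one-dimensional IFS $\{x\mapsto\rho_ix+\langle t_i,u_\theta\rangle\}_{i\in\Lambda}$ obtained by projecting the generators onto a unit vector $u_\theta$ along $\theta$: heuristically $\pi_\theta\mu$ is a self-similar measure on $\R$ whose entropy $h$ and Lyapunov exponent $\chi$, hence their ratio $h/\chi\ge\dim_{\rm H}\mu$, do not depend on $\theta$, so $\dim_{\rm H}\pi_\theta\mu=\min\{1,h/\chi\}\ge\min\{1,\dim_{\rm H}\mu\}$ as soon as the projected IFS is exponentially separated, and for algebraic data the failure of exponential separation in direction $\theta$ reduces to $\langle t_{\bi}-t_{\bj},u_\theta\rangle=0$ for some pair of words with $\rho_{\bi}=\rho_{\bj}$ --- a countable, effectively listable family. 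Promoting this picture to all parameters, including transcendental data and systems with planar coincidences, is precisely what the paper's parametric transversality result does for the one-parameter family $\theta\mapsto\pi_\theta\mu$.

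The main obstacle is the CP-distribution projection theorem itself --- the improvement of Marstrand's ``Lebesgue-almost every $\theta$'' in \eqref{eq:1} to ``all but countably many $\theta$''. I would attack it through a rigidity dichotomy: for each $\theta$, either $\dim\pi_\theta Q=\min\{1,\dim Q\}$, or $\theta$ is ``resonant''; and then show that the resonant set is genuinely at most countable, rather than merely of zero Lebesgue measure or of small Hausdorff dimension. To force countability I would combine dimension conservation for CP-distributions, $\dim Q=\dim\pi_\theta Q+(\text{typical fibre dimension})$, with either an inverse theorem for entropy growth under convolution (excluding a drop unless a rigid algebraic coincidence occurs) or a transversality comparison between nearby directions that makes two distinct resonant directions incompatible. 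Extracting true countability from such an argument --- and then describing the resonant set explicitly enough to justify the word ``determinable'' --- is where I expect the real difficulty; everything upstream of it is essentially bookkeeping.
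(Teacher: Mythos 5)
First, a framing point: the statement you are addressing is labeled a \emph{conjecture} in the paper and is not proved there; the surrounding text even says that ``Conjecture~\ref{conj:proj-self-similar:1} (for self-similar sets without rotations) in its full generality is still open.'' What the paper establishes is only the countability half, and for that it goes through Theorem~\ref{thm:main:-2} (whose engine is Theorem~\ref{thm:main:pertubate-increase:1}, the transversality perturbation theorem), not through Conjecture~\ref{conj:proj-self-similar:1} itself. You correctly flag in your final paragraph that ``explicitly determinable'' is the genuine obstruction, but once you concede that, the write-up cannot stand as a proof of the conjecture; at best it is a sketch of the countability half.

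On the countability half, your route is close in spirit to the paper's Theorem~\ref{thm:main-proj-upper-unif-entropy-dim} rather than to its Theorem~\ref{thm:main:-2}. You pass to self-similar measures and then appeal to the CP-distribution projection theorem via two claims: (i) every planar self-similar measure is uniformly scaling and generates an ergodic CP-distribution $Q$ with $\dim Q=\dim_{\rm H}\mu$, ``with no separation hypothesis''; and (ii) \cite[Theorem~8.1]{HS2012} gives $\dim_{\rm H}\pi_\theta\mu\ge\dim\pi_\theta Q$ for every $\theta$. Claim (ii) is fine granted (i), but (i) is the load-bearing step, and the paper deliberately avoids relying on it. Lemma~\ref{lemma:unif-dim-meas-generate-unif-dim-CP-dist} only extracts CP-distributions along subsequences from the weaker hypothesis of upper uniform entropy dimension, and accordingly Theorem~\ref{thm:main-proj-upper-unif-entropy-dim} only controls \emph{packing} dimension. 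Recovering Hausdorff dimension in your argument needs both that the planar measure is genuinely uniformly scaling and that the projected measures are exact dimensional; the latter is clear in the rotation-free case (projections are self-similar on $\R$), but both facts require a citation or a proof that your proposal does not supply.

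There is also a concrete gap in your finite-rotation-group branch. After disposing of infinite $\langle O_i\rangle$ via Peres--Shmerkin, you write the projected system as the honest one-dimensional IFS $\{x\mapsto\rho_ix+\langle t_i,u_\theta\rangle\}$ and then reason about exponential separation of it. That representation is only valid when every $O_i$ is the identity: if some $O_i$ is a nontrivial element of the (finite) group, $\pi_\theta\circ f_i$ does not factor through $\pi_\theta$, and $\pi_\theta\mu$ is not self-similar for any one-dimensional IFS. Passing to an iterate $\Phi^k$ does not in general trivialize the orthogonal parts --- for instance, if one generator has trivial orthogonal part and another is a nontrivial rotation, every iterate contains both --- so the reduction to the rotation-free case is not automatic. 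The paper avoids this by stating Remark~(1) after Theorem~\ref{thm:main:-2} only for systems \emph{without} rotations; handling a nontrivial finite orthogonal group would require an additional argument along the lines of \cite{Hochman-MAMS}.
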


When the self-similar set in question involves irrational rotations in its construction, Conjecture \ref{conj:proj-self-similar:1} was confirmed by Peres and Shmerkin \cite{Peres-Shmerkin}. In such a situation, there are no exceptional directions. The methods used to study projections of self-similar sets with rotations do not apply to self-similar sets without rotations.

For self-similar sets without rotations, Conjecture \ref{conj:proj-self-similar:1} is closely related to the study of self-similar sets on $\R$
with overlaps. This is because projections of such self-similar sets remain self-similar sets on 
$\R$,  often exhibiting overlaps in their construction. In this case, Conjecture \ref{conj:proj-self-similar:1} predicts that for a self-similar set without rotations, an exceptional projection can occur only if there exist two distinct cylinders in the construction that have exactly the same projection. Such exceptional projections form at most a countable set.
This conjecture is widely known as the {\em exact overlaps conjecture}, one of the central conjectures in fractal geometry. It has motivated many recent breakthroughs in the dimension theory of self-similar sets (see, e.g., \cite{Hochman-ICM,Varju-ICM}).

While Conjecture 1 (for self-similar sets without rotations) in its full generality is still open,  in recent years there have been some spectacular progress towards this conjecture (see e.g.  \cite{BV2019,Hochman2014, Hochman-MAMS,Rapaport-Ann.ENS,Shmerkin2014,Shmerkin2019,Varju-Annals,Rapaport-Varju}),  starting from the breakthrough of Hochman \cite{Hochman2014} who introduced the additive combinatorial methods  into the study of self-similar sets which has been revolutionary in the domain.  In \cite{Hochman2014},  it is shown that for every self-similar set in the plane, the set of exceptional directions is of Hausdorff (and Packing) dimension zero.  There are subsequent works following Hochman's,  many important special cases of Conjecture 1 have since been  proved to be true (see \cite{Hochman2014,  BV2019, Shmerkin2019,  Varju-Annals,Rapaport-Ann.ENS,  Rapaport-Varju,  FengFeng}).

Following the work of Hochman and the subsequent developments,  it is natural to ask whether additive combinatorial methods can be used to show that, for any self-similar set, the exceptional set is at most countable. In this paper, we confirm that this is indeed the case.  We prove a more general result for parametric family of self-similar measures satisfying a tranversality condition.  

Let us first recall the relevant notion.  
Let $\F=\{f_i(x)=r_ix+t_i\}_{i\in \Lambda}$ be a family of contracting affine maps on $\R$.  Such a family $\F$ is called an iterated function system (IFS).  Given  a probability vector $p=(p_i)_{i\in \Lambda}$,  there exists a unique probability measure $\mu_p$ on $\R$ satisfying 
\[\mu_p=\sum_{i\in \Lambda}p_if_i\mu_p.\]
The measure $\mu_p$ is called the self-similar measure associated to $\F$ and $p$. The so called similarity dimension of $\mu_p$ is  given by $\sd \mu_p=\frac{\sum_ip_i\log p_i}{\sum_ip_i\log |r_i|}$. 
\begin{theorem}\label{thm:main:-2}
Let $J$ be an open  interval, and for each $t\in J$, let $\F_t$ be an IFS on $\R$. For a given probability vector $p=(p_i)_{i\in \Lambda}$, let $\mu_{p,t}$ be the self-similar measure associated to $\F_t$ and $p$.  Suppose that the family $\{\F_t\}_{t\in J}$ satisfies the $\beta$-transversality condition.  Then the following set is at most countable:
\[
\{t\in J: \dim \mu_{p,t}<\min (1,\sd \mu_{p,t})\}.
\]
\end{theorem}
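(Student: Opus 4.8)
The plan is to derive this from Hochman's entropy--increase criterion for self-similar measures on $\R$ and then to \emph{locate} the exceptional parameters using the quantitative content of $\beta$-transversality. For an IFS $\F_t=\{f_i^t(x)=r_i^tx+t_i^t\}_{i\in\Lambda}$ and a finite word $I=i_1\cdots i_n$ write $\varphi_I^t=f_{i_1}^t\circ\cdots\circ f_{i_n}^t$, $r_I^t=\prod_k r_{i_k}^t$, and set
\[
\Delta_n(t)=\min\bigl\{\,|\varphi_I^t(0)-\varphi_J^t(0)|+|r_I^t-r_J^t|\ :\ I\neq J\in\Lambda^n\,\bigr\}.
\]
By Hochman's theorem \cite{Hochman2014} applied to $\mu_{p,t}$, if $\dim\mu_{p,t}<\min(1,\sd\mu_{p,t})$ then $-\tfrac1n\log\Delta_n(t)\to\infty$ (super-exponential concentration of cylinders). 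Hence it suffices to prove that
\[
E:=\bigl\{t\in J:\ -\tfrac1n\log\Delta_n(t)\to\infty\bigr\}
\]
is at most countable, and I expect the bulk of $E$ to be accounted for by exact overlaps.

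From $\beta$-transversality I would first record the following. There are $\delta>0$ and a global Lipschitz bound $L$ for the maps $t\mapsto f_i^t$ such that each difference function $g_{\bi,\bj}(t):=\pi_t(\bi)-\pi_t(\bj)$, with $\bi,\bj\in\Lambda^{\N}$ and $i_1\neq j_1$, is $L$-Lipschitz and obeys $|g_{\bi,\bj}(t)|\le\delta\Rightarrow|g_{\bi,\bj}'(t)|\ge\delta$. Two consequences follow at once: (a) between two zeros of $g_{\bi,\bj}$ the function must attain magnitude $\ge\delta$ at an interior critical point, so consecutive zeros are $\ge\delta/L$ apart; thus on any compact subinterval $g_{\bi,\bj}$ has $O(1/\delta)$ zeros and $\{t:|g_{\bi,\bj}(t)|<\epsilon\}$ is a union of $O(1/\delta)$ intervals of length $O(\epsilon/\delta)$; (b) consequently the set
\[
\mc{O}:=\bigl\{t\in J:\ \varphi_I^t=\varphi_J^t\ \text{for some }I\neq J\in\Lambda^{*}\bigr\}
\]
of parameters admitting an exact overlap is a countable union (over pairs of finite words) of finite sets, hence at most countable. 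It therefore remains to bound $E\setminus\mc{O}$.

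So fix $t_0\in E$. For every $m$ and all $n\ge N(m)$ pick a minimizing pair $I_n\neq J_n\in\Lambda^n$ realizing $\Delta_n(t_0)<e^{-mn}$, factor out their longest common prefix $w_n$, and write $I_n=w_na_nI_n'$, $J_n=w_nb_nJ_n'$ with $a_n\neq b_n$ and $q_n:=|a_nI_n'|=|b_nJ_n'|=n-|w_n|$. Since $\varphi_{I_n}^{t_0}=\varphi_{w_n}^{t_0}\circ\varphi_{a_nI_n'}^{t_0}$ and likewise for $J_n$, the bound becomes $d\bigl(\varphi_{a_nI_n'}^{t_0},\varphi_{b_nJ_n'}^{t_0}\bigr)\le e^{-mn}/|r_{w_n}^{t_0}|\le e^{-mn}r_{\min}^{-(n-q_n)}$, which for $m>|\log r_{\min}|$ and $n$ large is $\le e^{-mq_n/2}$ (here $d$ is the metric on affine maps used to define $\Delta_n$). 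Appending one fixed reference word $\bar k$ to both of $a_nI_n'$ and $b_nJ_n'$ produces infinite words $\bi_n,\bj_n$ with distinct first symbols, and since the common tail cancels to first order one gets $|g_{\bi_n,\bj_n}(t_0)|\le(|\pi_{t_0}(\bar k)|+1)e^{-mq_n/2}$. If $\liminf_n q_n<\infty$, then along a subsequence $a_nI_n'$ and $b_nJ_n'$ run over two fixed words of equal length with distinct first symbols whose cylinder maps agree at $t_0$ in both translation part and ratio; hence $\varphi_u^{t_0}=\varphi_v^{t_0}$ for distinct finite words $u,v$, i.e.\ $t_0\in\mc{O}$.

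The remaining, and main, difficulty is the case $q_n\to\infty$. Here the above only shows that $t_0$ is a super-exponentially fast limit of parameters $z_n$ with $\pi_{z_n}(\bi_n)=\pi_{z_n}(\bj_n)$, while the witnessing words change and lengthen with $n$, so this argument alone does not place $t_0$ in $\mc{O}$. Excluding such ``deep'' near-coincidences --- showing that super-exponential concentration persisting over all scales without an exact overlap can occur for at most countably many parameters --- is a parametric sharpening of the exact overlaps conjecture for transversal families and is the technical heart of the theorem. The plan is to feed the bounds $d(\varphi_{a_nI_n'}^{t_0},\varphi_{b_nJ_n'}^{t_0})\le e^{-mq_n/2}$, valid \emph{simultaneously} for all large $n$ with $m$ as large as we wish, into the bounded--multiplicity estimate (a): the sets $\{t:\Delta_n(t)<e^{-mn}\}$ are unions of intervals whose lengths shrink super-exponentially while their number is controlled by the generating tree, which should force the nested near-coincidences through $t_0$ to follow a bounded-branching subtree, so that the intersection over $n$ collapses to a countable (indeed locally finite) set. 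Making this rigorous --- quantifying the interplay between the tree combinatorics and the constant $\delta$ uniformly as $q_n\to\infty$ --- is where the real work lies, and where one needs the full quantitative force of $\beta$-transversality rather than merely its almost-everywhere consequences.
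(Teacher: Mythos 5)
Your proposal has a genuine, acknowledged gap at the critical step, and the route you chose is in fact different from what the paper does. You reduce to showing that the set of parameters where cylinders concentrate super-exponentially is at most countable, and then you split into the case of bounded $q_n$ (which you correctly resolve, landing in the exact-overlap set $\mc{O}$) and $q_n\to\infty$, which you explicitly leave open. The final paragraph's sketch --- that the nested sets $\{t:\Delta_n(t)<e^{-mn}\}$ should ``follow a bounded-branching subtree'' --- is not going to close easily: the number of candidate pairs $I\neq J\in\Lambda^n$ is $|\Lambda|^{2n}$, so even though each bad interval shrinks super-exponentially, the number of intervals grows exponentially and there is no obvious mechanism forcing the intersection over $n$ to be countable. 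Worse, you would be trying to prove something a priori stronger than the theorem (countability of the full super-exponential-concentration set rather than only the dimension-drop set), and the known Baker / B\'ar\'any--K\"aenm\"aki constructions of super-exponential condensation without exact overlaps show this is delicate territory.

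The paper avoids this problem entirely by taking a completely different route. Rather than analyzing the combinatorics of near-coincident cylinders, it proves the local perturbation statement (Theorem \ref{thm:main:pertubate-increase:1}): if $\dim\mu_{p,t_0}<\min(1,\sd\mu_{p,t_0})$, then there exist $\delta,\epsilon>0$ such that $\dim\mu_{p,t}\ge\dim\mu_{p,t_0}+\delta$ for all $t\in(B(t_0,\epsilon)\cap J)\setminus\{t_0\}$. Countability of the exceptional set then follows by a short measure-theoretic argument (if it were uncountable, put a non-atomic measure on it, use Egorov to get a uniform $\delta,\epsilon$ on a set of positive measure, and derive a contradiction from two nearby points). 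The content of the perturbation statement is proved via Hochman's \emph{inverse theorem for entropy} (Theorem \ref{theorem:Hochman-inverse-thm}), not via the super-exponential-separation criterion: one decomposes $H(\mu_t,\D_{3nm})$ over blocks at scale $n\approx-\tfrac1m\log|t-t_0|$, uses the fact that a dimension drop at $t_0$ forces a definite entropy discrepancy $\sum a_Q\sum\tfrac{p_I}{a_Q}\log\tfrac{a_Q}{p_I}\gtrsim\delta_1 n$ in a positive fraction of blocks, and then the key transversality lemma (Lemma \ref{lemma:pf-thm-pertubate-increase-2:4}) converts near-coincidence at $t_0$ (low entropy of the discrete cylinder-center measure at $t_0$) into spreading at $t$ (entropy $\ge\rho n$ of the cylinder-center measure at $t$), which by the inverse theorem yields a quantitative entropy gain for $\mu_t$. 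Your observation (a) about the zeros of the transversal difference functions is a reasonable first step, but it is only used here at a single scale $n\sim-\log|t-t_0|$ and through entropy, not to control an intersection over all scales. The step you are missing is precisely this shift from ``locate the exceptional parameters directly'' to ``show that perturbing away from an exceptional parameter strictly raises dimension,'' which lets the soft countability argument do the final work.
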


 We refer to Section \ref{section: result-projection-measure-self-similar-IFS:1} for details on the notion of the $\beta$-transversality condition and examples of IFSs that satisfy it. 
\begin{remark}
\begin{itemize}
\item[(1)] The family of orthogonal projections of any self-similar measure without rotations satisfies the $\beta$-transversality condition. Thus, Theorem \ref{thm:main:-2} implies that the exceptional set for projections of any such measure is at most countable.
\item[(2)]  It seems plausible that the exceptional estimates in Theorem \ref{thm:main:-2} also hold for certain nonlinear IFSs on $\R$. For instance, for a one-parameter family of Furstenberg measures (as studied in \cite[Section 6.1]{HochmanSolomyak}) satisfying an analogous transversality condition to the one imposed in Theorem \ref{thm:main:-2}, it may be possible to combine arguments from \cite{HochmanSolomyak} with those in the present paper to show that the exceptional set for the dimension of Furstenberg measures is at most countable. However, we shall not pursue this further here.
\end{itemize}
\end{remark}

 In fact, we prove the following stronger result, from which Theorem \ref{thm:main:-2} follows as an immediate corollary. This theorem is of independent interest.
 \begin{theorem}\label{thm:main:pertubate-increase:1}
Let $J, \{\F_t\}$ and $\mu_{p,t}$ be as in Theorem \ref{thm:main:-2}.  Suppose that the family $\{\F_t\}_{t\in J}$ satisfies the the $\beta$-transversality condition.   Then for any $t\in J$ with $\dim \mu_{p,t}<\min (1,\sd \mu_{p,t})$,  there exist $\delta>0$ and $\epsilon>0$ such that 
\[
 \dim \mu_{p,t'} \ge \dim \mu_{p,t}+\delta  \  \  \  \textrm{ for } t'\in \left([t-\epsilon,t+\epsilon]\cap J\right)\setminus \{t\}. 
\]
\end{theorem}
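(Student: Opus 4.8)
The goal is to produce, given $t_0\in J$ with $\alpha:=\dim\mu_{p,t_0}<\min(1,s_0)$ where $s_0:=\sd\mu_{p,t_0}$, a number $\delta_0>0$ and a window $(t_0-\epsilon_0,t_0+\epsilon_0)\cap J$ on which $\dim\mu_{p,t'}\ge\alpha+\delta_0$ for all $t'\ne t_0$. The starting point is the entropy formula for self-similar measures: with $\eta_{n,t}:=\sum_{\mathbf{i}\in\Lambda^n}p_{\mathbf{i}}\,\delta_{f^t_{\mathbf{i}}(0)}$ the $n$-th generation cylinder-position measure and $N(n)$ the dyadic scale at which level-$n$ cylinders have size $\approx 2^{-N(n)}$ (so $N(n)/n$ tends to a positive constant), one has $\dim\mu_{p,t}=\lim_n\tfrac{1}{N(n)}H(\eta_{n,t},\D_{N(n)})$ after the usual reductions — restricting to words of typical contraction ratio, by a large deviations estimate, and averaging over translates of the dyadic partitions to suppress boundary effects. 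The hypothesis on $t_0$ thus says exactly that the cylinder-position measures at $t_0$ carry a fixed entropy deficit below $\min(1,s_0)N(n)$ at every scale $n$.

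The engine is a quantitative form of Hochman's inverse theorem for the entropy of convolutions, applied through the self-renormalising relation $\eta_{a+b,t}=\eta_{a,t}\ast(\text{rescaled }\eta_{b,t})$ (valid up to the reductions above). Because of the entropy deficit, a multiscale analysis of this relation, in the spirit of Hochman's proof, forces $\mu_{p,t_0}$ to be concentrated on a set $\mathcal{M}\subseteq\N$ of scales of positive lower density: for $m\in\mathcal{M}$ the level-$m$ cylinder-position measure is $\theta$-far from uniform at resolution $2^{-m}$, with $\theta>0$ controlled by the deficit $\min(1,s_0)-\alpha$, this being witnessed by many near-coincidences $|f^{t_0}_{\mathbf{i}}(0)-f^{t_0}_{\mathbf{j}}(0)|\lesssim 2^{-m}$ among words $\mathbf{i}\ne\mathbf{j}$ of the relevant length; and, quantitatively, destroying a $\theta$-proportion of these near-coincidences raises $H(\eta_{m,\cdot},\D_{N(m)})$ by at least $c\,N(m)$ for some $c=c(\theta)>0$.

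Transversality is then used to destabilise the near-coincidences. Fix $t'\ne t_0$ and put $\rho=|t'-t_0|$. If $\mathbf{i}\ne\mathbf{j}$ have a near-coincidence at $t_0$ and their longest common prefix $u$ satisfies $|r_u|\gtrsim 2^{-m/2}$, then after stripping $u$ the quantity $f^t_{\mathbf{i}'}(0)-f^t_{\mathbf{j}'}(0)$ (with distinct leading symbols) is $\lesssim 2^{-m/2}$ at $t_0$, and by the $\beta$-transversality condition it is pushed away from $0$ at a definite rate as $t$ leaves $t_0$, so for all $m$ large in terms of $\rho$ it exceeds a fixed positive power of $\rho$ at $t'$; undoing the stripping, $|f^{t'}_{\mathbf{i}}(0)-f^{t'}_{\mathbf{j}}(0)|$ is then bounded below by $|r_u|$ times that power. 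The near-coincidences between words with a longer common prefix are removed by a renewal/stopping-time decomposition — a deep common prefix is a rescaled copy of the whole IFS, so those coincidences re-enter the analysis at coarser scales, and a counting argument shows they cannot account for the concentration on all of $\mathcal{M}$ by themselves. Feeding this back into the entropy formula at $t'$: for $m\in\mathcal{M}$ large compared with $\log(1/\rho)$ a $\theta$-proportion of the level-$m$ near-coincidences of $\eta_{m,t_0}$ has been resolved at $t'$, whence $H(\eta_{m,t'},\D_{N(m)})\ge H(\eta_{m,t_0},\D_{N(m)})+c\,N(m)=(\alpha+c)N(m)+o(N(m))$; since the limit $\lim_m\tfrac{1}{N(m)}H(\eta_{m,t'},\D_{N(m)})=\dim\mu_{p,t'}$ exists, letting $m\to\infty$ along $\mathcal{M}$ gives $\dim\mu_{p,t'}\ge\alpha+c=:\alpha+\delta_0$.

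The main obstacle is that $\delta_0$ and the window must be taken uniform in $t'$: a naive run of the above degrades both as $\rho\to0$, because the resolved near-coincidences are visible only at scales exceeding $\log(1/\rho)$ and with a separation power of $\rho$ that itself shrinks. Handling this requires (i) a parameter-uniform version of the quantitative inverse theorem — in particular the renewal decomposition must be executed with constants independent of $t'$, keeping the relevant "shallow" scales large compared with $\log(1/\rho)$, so that $\theta$ and $c=c(\theta)$ depend only on the deficit $\min(1,s_0)-\alpha$ — and (ii) a compactness argument at the level of the magnification dynamics / CP-distributions generated by the $\mu_{p,t}$: as $t'\to t_0$, $\mu_{p,t'}$ converges weakly to $\mu_{p,t_0}$ and the associated distributions on micromeasures converge as well, while $\beta$-transversality forbids the ``degenerate'' portion of the scenery of $\mu_{p,t_0}$ responsible for the deficit from persisting along the family, which pins a fixed amount of recovered entropy in the limit; this is where the CP-distribution machinery developed elsewhere in the paper enters. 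Theorem~\ref{thm:main:-2} follows immediately, since Theorem~\ref{thm:main:pertubate-increase:1} exhibits every point of the exceptional set as a strict local minimum of $t\mapsto\dim\mu_{p,t}$, and a real function has at most countably many strict local minima.
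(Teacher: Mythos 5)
Your proposal correctly identifies the two driving ingredients — the entropy deficit forces near-coincidences among cylinder positions, and the $\beta$-transversality condition destabilises those coincidences as $t$ moves, which should be amplified by Hochman's inverse theorem into a dimension increase. But the organizing device that actually makes the paper's argument work is absent from your outline, and its absence is exactly what creates the ``uniformity obstacle'' you then try to patch with a speculative compactness/CP-distribution step.

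The paper does not fix a set $\mathcal{M}$ of bad scales for $t_0$ and then try to push the estimate uniformly over all nearby $t'$. Instead, it runs the argument at a scale \emph{matched to} $|t'-t_0|$: given $t'$, set $n$ so that $|t'-t_0|=2^{-nm\pm m}$ with $m=\lfloor\beta+3\rfloor$, and compare the entropies $H(\mu_{t_0},\D_{3nm})$ and $H(\mu_{t'},\D_{3nm})$ at that single coupled scale. Exact dimensionality and uniform entropy dimension of $\mu_{t_0}$ (which are free for self-similar measures) guarantee the entropy-deficit picture holds at \emph{every} sufficiently large $n$, so no selection of a positive-density scale set is needed. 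The partition is by words $I\in\Lambda_n$ grouped into dyadic cells $Q\in\D_n$; a positive-measure collection $B_1$ of cells carries the hidden positional entropy $\delta_1$ (coming from $\sd\mu_{t_0}-\dim\mu_{t_0}>0$), while a nearly full collection $B_2$ has small total entropy at scale $3nm$ by exact dimensionality. On $B_1\cap B_2$, Hochman's inverse theorem applied to the convolution $\nu_{Q,r}^{t_0}=\eta_{Q,r}^{t_0}*r\mu_{t_0}$ forces $H(\eta_{Q,r}^{t_0},\D_{3nm})=o(n)$; then transversality (Lemma 3.7 of the paper, proved by a mean-value argument directly at scale $3nm$ for parameters $2^{-nm}$ apart) forces $H(\eta_{Q,r}^{t'},\D_{3nm})\gtrsim n$; a second application of the inverse theorem then lifts $H(\nu_Q^{t'},\D_{3nm})$ above $(3m-1)n(\dim\mu_{t_0}+\delta_2)$. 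Summing over $Q$ and using the standard $\dim\eta\ge\frac1k H(\eta,\D_k)-\frac{C}{k}$ for self-similar $\eta$ gives $\dim\mu_{t'}\ge\dim\mu_{t_0}+\delta_3/2$ with $\delta_3$ depending only on $\delta_1,\beta,\mu_{t_0}$ — uniformity is automatic precisely because the scale was tied to $|t'-t_0|$.

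Concretely, three things in your proposal are gaps rather than details to be filled in. First, the ``renewal/stopping-time decomposition'' to dispose of deep common prefixes is not carried out; in the paper this issue never arises because the transversality lemma is applied to pairs $I\neq J\in\Lambda_n$ whose positions agree at scale $3nm$ at $t_0$, and $\beta$-transversality together with the derivative bound \eqref{eq:definition-IFS-regularity:1} gives separation $\gtrsim 2^{-2nm}$ at $t'$ directly, regardless of the depth of the common prefix. Second, the compactness/CP-distribution step (ii) is not present in the paper and is not needed — the scale-matching removes the degeneration as $\rho\to0$ that you are trying to control. Third, your derivation of the positive-density scale set $\mathcal{M}$ (``multiscale analysis in the spirit of Hochman's proof'') is asserted rather than proved, and the subsequent claim that it persists uniformly as $t'\to t_0$ is exactly the uniformity you flag as the main obstacle; without the scale-matching, that claim would require an argument of roughly the same difficulty as the whole theorem. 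You should replace the $\mathcal{M}$-based multiscale framework and the CP-distribution compactness step by the single-scale comparison at $n\approx -\frac{1}{m}\log|t'-t_0|$, which is what makes $\delta$ and $\epsilon$ come out uniform without further work.
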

 
Let us show how to deduce Theorem \ref{thm:main:-2} from Theorem \ref{thm:main:pertubate-increase:1}.  
\begin{proof}[Proof of Theorem \ref{thm:main:-2}]
Let 
\[
E=\{t\in J: \dim \mu_{p,t}<\min (1,\sd \mu_{p,t})\}.
\]
It is easy to check that $E$ is a Borel set.  Suppose that $E$ is uncountable.  Then there exists a non-atomic Borel probability measure $\nu$ supported on a compact subset of $E$.  By Theorem \ref{thm:main:pertubate-increase:1}, for each $t\in E$, 
there exist $\delta(t)>0$ and $\epsilon(t)>0$ such that for all $t'\in \left(B(t,\epsilon(t))\cap J\right)\setminus \{t\}$, we have 
\begin{equation}\label{eq:proof-Theorem-self-similar-transversality:100}
\dim \mu_{p,t'} \ge \dim \mu_{p,t}+\delta(t).
\end{equation}
By Egorov's theorem,  there exist a Borel set $E_1\subset E$ with $\nu(E_1)>0$ and a constant $\rho>0$ such that $\delta(t)\ge \rho$ and $\epsilon(t)\ge \rho$ for each  $t\in E_1$.  Since $\nu$ is non-atomic,  the subset $E_1$ must be uncountable and in particular,  infinite.  Thus there exist $t_1,t_2\in E_1$ such that $0<|t_1-t_2|<\rho$.  By \eqref{eq:proof-Theorem-self-similar-transversality:100}, we must have
\[
\dim \mu_{p,t_1}\ge \dim \mu_{p,t_2}+\rho \ \textrm{ and } \dim \mu_{p,t_2}\ge \dim \mu_{p,t_1}+\rho.
\]
This is  clearly impossible.
\end{proof}

Our methods also enable us to establish sharp projection theorems for a broader class of measures that satisfy only a statistical self-similarity property. These measures, known as CP-chain measures, were introduced by Furstenberg \cite{Furstenberg69} and later significantly developed by Hochman and Shmerkin \cite{HS2012} as a key tool for studying the local structure of measures, particularly projections of fractal measures.
Briefly, given a finite measure $\mu$ on $\R^d$ and a point $x\in {\rm supp} (\mu)$, we consider the sequence of measures $\mu^{\D_n(x)}$, obtained by restricting $\mu$ to the dyadic cell $\D_n(x)$ containing $x$, then normalizing and rescaling it to the unit cube. The essential requirement on $\mu$ is that, for $\mu$-typical $x$, the sequence $(\mu^{\D_n(x)})_n$ is generic for some distribution $Q$ on probability measures. This limiting distribution $Q$ is called a CP-distribution, and its dimension, denoted $\dim Q$, is defined as the average dimension of measures with respect to $Q$.
For further details on CP-distributions, see Section \ref{subsection: properties-of-CP-dist:1}.

\begin{theorem}\label{thm:proj-thm-cp-distribution:-2}
Let $Q$ be an ergodic CP-distribution on $\R^2$ with respect to the dyadic partition.  Then the following set is at most countable:
\[
\{\pi\in G(2,1): \dim \pi_\theta(Q)<\min(1,\dim Q)\}.
\]
\end{theorem}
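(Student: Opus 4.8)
The plan is to follow the same blueprint as the proof of Theorem~\ref{thm:main:-2}: establish a local ``perturbation'' statement and then pass to countability by the Egorov argument already used. Concretely, I would aim to prove the following Claim: \emph{if $Q$ is an ergodic CP-distribution on $\R^2$ with $\dim\pi_{\theta_0}(Q)<\min(1,\dim Q)$, then there exist $\delta>0$ and $\epsilon>0$ such that $\dim\pi_{\theta}(Q)\ge \dim\pi_{\theta_0}(Q)+\delta$ for all $\theta\in B(\theta_0,\epsilon)\setminus\{\theta_0\}$.} Granting the Claim, the theorem follows verbatim as Theorem~\ref{thm:main:-2} follows from Theorem~\ref{thm:main:pertubate-increase:1}: the set $E=\{\theta\in G(2,1):\dim\pi_\theta(Q)<\min(1,\dim Q)\}$ is Borel, so were it uncountable it would carry a non-atomic Borel probability measure $\nu$; Egorov's theorem then yields $E_1\subseteq E$ with $\nu(E_1)>0$ on which $\delta(\theta),\epsilon(\theta)\ge\rho$ for a fixed $\rho>0$; non-atomicity makes $E_1$ infinite, hence it contains $\theta_1\ne\theta_2$ with $|\theta_1-\theta_2|<\rho$, and applying the Claim at $\theta_1$ and at $\theta_2$ forces $\dim\pi_{\theta_1}(Q)\ge\dim\pi_{\theta_2}(Q)+\rho$ and $\dim\pi_{\theta_2}(Q)\ge\dim\pi_{\theta_1}(Q)+\rho$ simultaneously, a contradiction.

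To prove the Claim I would first recast $\{\pi_\theta(Q)\}_\theta$ symbolically. Coding the points of a $Q$-typical measure $\mu$ on $[0,1]^2$ by their base-$2$ digit expansions, the magnification dynamics exhibit the digit sequence as distributed according to a shift-invariant ergodic measure $P$ on sequences of dyadic quadrants, whose entropy rate is $\dim Q$; and, by the Hochman--Shmerkin theory of local entropy averages, $\dim\pi_\theta(Q)$ equals the dimension of the pushforward of $P$ under the coding map $\omega\mapsto a_\theta\cdot\sum_k\omega_k 2^{-k}$ of the iterated function system $\mathcal F_\theta=\{x\mapsto\tfrac12 x+\tfrac12 d:\ d\in\{0,\cos\theta,\sin\theta,\cos\theta+\sin\theta\}\}$, whose similarity dimension equals $\dim Q$. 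In this way $\{\pi_\theta(Q)\}_\theta$ becomes a parametric family of self-similar-type measures driven by a \emph{fixed} ergodic source, and, just as for orthogonal projections of genuine self-similar measures (cf.\ the Remark after Theorem~\ref{thm:main:-2}), the family $\{\mathcal F_\theta\}$ satisfies the $\beta$-transversality condition.

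I would then run the mechanism behind Theorem~\ref{thm:main:pertubate-increase:1} in this marginally broader setting. A dimension drop at $\theta_0$ forces, via Hochman's inverse theorem for the entropy of convolutions, a \emph{super-exponential} coincidence of length-$n$ cylinders at $\theta_0$ --- equivalently, a vector $w_n\in 2^{-n}\Z^2\setminus\{0\}$ with $|a_{\theta_0}\cdot w_n|$ decaying faster than every exponential --- whereas $\beta$-transversality guarantees that any such near-coincidence is destroyed at a definite rate once $\theta$ is moved off $\theta_0$; since, by the local-entropy-average formula, $\dim\pi_\theta(Q)$ is (up to normalization) the average over scales of a single-scale projected entropy, a surplus of $\ge\delta$ at a positive density of scales for $P$-typical sequences produces the claimed dimension gap.

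The main obstacle is the Claim, and within it two points require care. First, one must justify the reduction $\dim\pi_\theta(Q)=\dim(\text{coded measure})$ and the identification of the similarity dimension with $\dim Q$; this is where the CP-chain toolkit (stationarity of the component distribution, exact dimensionality of $Q$-typical measures, local entropy averages) enters, and one must separately dispose of degenerate $Q$ supported on measures carried by a single line, for which at most one direction is exceptional. Second, and more substantially, Theorem~\ref{thm:main:pertubate-increase:1} is stated for Bernoulli self-similar measures, whereas the coded measure is driven by a general shift-invariant source, so the transversality-plus-inverse-theorem argument must be carried out at that generality; the genuinely delicate point is the upgrade from a ``super-exponential coincidence at $\theta_0$'' to a \emph{uniform} dimension gap over the whole punctured neighbourhood of $\theta_0$ --- it is exactly this quantitative strengthening, beyond the zero-dimensional exceptional set furnished by Hochman's original argument, that makes the countability conclusion possible.
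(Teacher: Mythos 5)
Your high-level framing — that countability should follow by finding a non-atomic measure $\nu$ on the would-be uncountable exceptional set, applying Egorov, and pitting two close exceptional directions against each other — does match the shape of the paper's argument. However, the paper does \emph{not} prove your local perturbation Claim for CP-distributions (no analogue of Theorem~\ref{thm:main:pertubate-increase:1} is established at this level of generality), and your proposed route to the Claim via a symbolic reduction to a self-similar-type IFS $\mathcal F_\theta$ is not how the paper proceeds; it is also where your argument has a genuine gap.

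The key difficulty is the one you flag yourself at the end but do not resolve: once you code a $Q$-typical $\mu$ by its dyadic digit expansion, the driving measure $P$ on the shift is a general ergodic measure, not a Bernoulli measure, and the resulting ``coded'' measure is \emph{not} a self-similar measure. The engine of Section~\ref{section:easy-proof-proj-self-similar-without-rotation} is the convolution identity $\mu_t=\sum_{I\in\Lambda_{jn}}p_I f_{I,t}\mu_t$, which lets one apply Hochman's inverse theorem to the pair (discrete part, scaled copy of $\mu_t$) at every scale and every dyadic cell. That identity is precisely what fails for a general ergodic source, and the paper explicitly points this out at the start of Section~\ref{section: proj-CP-distributions:1} (``the lack of convolution structures when examining the local scenery of the projected measures''). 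Simply invoking $\beta$-transversality of $\{\mathcal F_\theta\}$ and running the Section~3 machinery ``in this marginally broader setting'' is not a valid step; one would need a new argument replacing the convolution decomposition, and you do not supply one.

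The paper's actual replacement for the missing convolution structure is entirely absent from your proposal, and it is the heart of Section~\ref{section: proj-CP-distributions:1}. Instead of trying to establish a dimension gap for all $\theta$ near a fixed $\theta_0$, the paper directly takes two exceptional directions $\pi_1,\pi_2$ at dyadic distance $\approx 2^{-l}$, uses Proposition~\ref{prop:slice-cp-distribution:1} to extract that the conditional measures $\mu_{\pi_1^{-1}(z)}$ along $\pi_1$-fibers are $(n,l_0,\epsilon_0)$-dyadic spreading, and then exploits the geometric fact that $\pi_2$ applied to a $\pi_1$-fiber behaves, up to error $O(2^{-2l})$, like the \emph{sumset} $\bigcup_{a\in A}(a+B_a)$ (see the Hausdorff-distance estimate preceding \eqref{eq:thm-proj-cp-distribution-1:14}). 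The contradiction then comes from the combinatorial entropy-increase result Theorem~\ref{thm:entropy-increase-1}, whose proof uses the asymmetric Balog--Szemer\'edi--Gowers theorem and Bourgain-style regularization (Lemma~\ref{lemma:regularization measures}), together with the porosity estimate Lemma~\ref{lemma:porous-set-small-mass}, to show that the required covering number $N_{2^{-2l}}(\pi_2(\tilde D_\mu))$ is too large for $\pi_2$ to be exceptional. None of these ingredients — dyadic spreading of slices, the BSG-based Theorem~\ref{thm:entropy-increase-1}, the covering-number contradiction — appear in your sketch, and they cannot be substituted by a transversality-plus-inverse-theorem argument without first proving, in the ergodic-source setting, exactly the perturbation claim whose proof is the open part of your plan.

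A smaller but real issue: even in the self-similar setting, the paper's Theorem~\ref{thm:main:pertubate-increase:1} gives a dimension gap for $t'$ in a \emph{punctured} neighbourhood of $t$, with $\delta,\epsilon$ depending on $t$; it does not assert that a super-exponential cylinder coincidence at $t$ is ``destroyed at a definite rate'' for nearby $t'$ in the strong uniform sense your sketch suggests. The mechanism is subtler (Lemma~\ref{lemma:pf-thm-pertubate-increase-2:4} compares the entropy of the discrete atoms at the two parameters at scale $2^{-3nm}$, not a single cylinder coincidence), and transporting it verbatim to the CP setting is not straightforward because one no longer has an analogue of $\Lambda_n$ and the Bernoulli weights $p_I$.

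In summary: your outer scaffolding (Borel set, non-atomic $\nu$, Egorov, two close directions) is correct and is also used by the paper, but the inner mechanism you propose is a different and incomplete route. The gap you acknowledge — upgrading the Section~3 argument from Bernoulli to general ergodic sources — is exactly what the paper's proof circumvents by working with fiber conditional measures and additive-combinatorial sumset estimates rather than with convolutions.
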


Thanks to the work of Furstenberg, Hochman, and Shmerkin on CP-distributions and their connection to the local structure of fractal measures, results on CP-distribution projections can be directly applied to derive projection theorems for general measures with certain regularity properties. In particular, this allows Theorem \ref{thm:proj-thm-cp-distribution:-2} to be used in the study of projections of regular fractal measures.  Below, we list some of these applications. For further details and additional applications, see Section \ref{section: precise-statements-results:0}.

Our first application concerns projections of measures satisfying a regularity condition known as {\em uniform entropy dimension}, introduced in \cite{Hochman2014}.  Roughly speaking, a measure $\mu$ has uniform entropy dimension $\alpha$ if, for $\mu$-most $x \in \text{supp}(\mu)$  and most $n \in \mathbb{N}$, the entropies of the component measures $\mu^{\mathcal{D}_n(x)}$ concentrate around $\alpha$.
For more details, see Section \ref{section: meas-uniform-ent-dimension:0}. We denote the packing dimension by $\dimP$.

\begin{theorem}\label{thm:main:-1}
Let $\mu$ be a finite Borel measure on $\R^2$.  Suppose that $\mu$ has upper uniform entropy dimension $\alpha$.  Then 
\[
\dim_{\rm P}\pi \mu=\min(1,\alpha)
\]
for all $\pi\in G(2,1)\setminus E$ where $E$ is at most countable.
\end{theorem}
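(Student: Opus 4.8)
The plan is to reduce Theorem~\ref{thm:main:-1} to the CP-distribution projection theorem, Theorem~\ref{thm:proj-thm-cp-distribution:-2}, through the scenery/CP-distribution machinery of Furstenberg and Hochman--Shmerkin, and then to collapse the resulting (a priori uncountable) family of exceptional directions to an at most countable set by the counting device already used in the proof of Theorem~\ref{thm:main:-2}. First, the inequality $\dim_{\rm P}\pi\mu\le\min(1,\alpha)$ holds for every $\pi\in G(2,1)$: the measure $\pi\mu$ is carried by a line, so $\dim_{\rm P}\pi\mu\le 1$, and since $\pi$ is Lipschitz we have $\dim_{\rm P}\pi\mu\le\dim_{\rm P}\mu\le\alpha$, the bound $\dim_{\rm P}\mu\le\alpha$ being a standard consequence of $\mu$ having upper uniform entropy dimension $\alpha$ (Section~\ref{section: meas-uniform-ent-dimension:0}). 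It therefore remains to prove that $\dim_{\rm P}\pi\mu\ge\min(1,\alpha)$ for all $\pi$ outside an at most countable set.

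For the lower bound I would use the following consequence of the hypothesis, drawn from the theory of CP-distributions and the ergodic decomposition of the scenery flow (Sections~\ref{subsection: properties-of-CP-dist:1} and \ref{section: meas-uniform-ent-dimension:0}): for $\mu$-a.e.\ $x$ there is an ergodic CP-distribution $\mathbf{Q}_x$ on $\R^2$, arising from the dyadic scenery $\bigl(\mu^{\D_n(x)}\bigr)_n$ of $\mu$ at $x$, with $\dim\mathbf{Q}_x\ge\alpha$, such that for every $\pi\in G(2,1)$ the local-entropy-averages estimate gives
\[
\overline{d}(\pi\mu,\pi x)\ \ge\ \dim\pi\mathbf{Q}_x
\]
for $\mu$-a.e.\ $x$, where $\overline{d}$ denotes the upper local dimension (it is the \emph{upper} uniform entropy dimension that is relevant here, since packing dimension is governed by upper local dimensions). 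By Theorem~\ref{thm:proj-thm-cp-distribution:-2} applied to the ergodic CP-distribution $\mathbf{Q}_x$, the set $E_x:=\{\pi\in G(2,1):\dim\pi\mathbf{Q}_x<\min(1,\dim\mathbf{Q}_x)\}$ is at most countable, and for $\pi\notin E_x$ we have $\dim\pi\mathbf{Q}_x=\min(1,\dim\mathbf{Q}_x)\ge\min(1,\alpha)$, hence $\overline{d}(\pi\mu,\pi x)\ge\min(1,\alpha)$.

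Now set $E:=\{\pi\in G(2,1):\mu(\{x:\pi\in E_x\})>0\}$. If $\pi\notin E$, then $\pi\notin E_x$ for $\mu$-a.e.\ $x$, so $\overline{d}(\pi\mu,\pi x)\ge\min(1,\alpha)$ for $\mu$-a.e.\ $x$, and by the standard characterization of the packing dimension of a measure this forces $\dim_{\rm P}\pi\mu\ge\min(1,\alpha)$, hence equality. It thus suffices to show that $E$ is at most countable. Suppose it were uncountable. Being Borel, $E$ then supports a non-atomic Borel probability measure $\nu$, and since each $E_x$ is countable we have $\nu(E_x)=0$ for every $x$. Fubini's theorem gives
\[
\int_{E}\mu\bigl(\{x:\pi\in E_x\}\bigr)\,d\nu(\pi)\ =\ \int\nu(E_x)\,d\mu(x)\ =\ 0,
\]
while the integrand on the left-hand side is strictly positive for every $\pi\in E$ and $\nu(E)=1$, a contradiction. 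Hence $E$ is at most countable, which completes the proof.

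The last two paragraphs are soft, the counting step being identical in spirit to the proof of Theorem~\ref{thm:main:-2}. The main obstacle is the CP-distribution input of the second paragraph: producing, for $\mu$-a.e.\ $x$, an \emph{ergodic} CP-distribution $\mathbf{Q}_x$ of dimension at least $\alpha$ attached to the scenery of $\mu$ at $x$, together with the packing lower bound $\overline{d}(\pi\mu,\pi x)\ge\dim\pi\mathbf{Q}_x$ \emph{uniformly in the direction} $\pi$. This requires combining the ergodic decomposition of the scenery flow, the local-entropy-averages technique for projections of fractal measures, and the relevant measurability statements for CP-distributions; once these ingredients are assembled in Sections~\ref{subsection: properties-of-CP-dist:1} and \ref{section: meas-uniform-ent-dimension:0}, Theorem~\ref{thm:proj-thm-cp-distribution:-2} supplies the countability of the exceptional set.
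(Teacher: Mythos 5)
Your overall strategy is the right one: reduce to Theorem \ref{thm:proj-thm-cp-distribution:-2} via CP-distributions, then collapse the exceptional directions by a soft countability argument. Your Fubini step in the final paragraph is in fact a cleaner variant of the paper's counting device, and the deduction $\overline{D}(\pi\mu,\pi(x))\ge\dim\pi Q_x$ from local entropy averages (Theorem 5.4 of \cite{HS2012}) is also essentially what the paper uses. However, there is a genuine gap in your second paragraph, and it is exactly the point that forces the paper into a considerably more elaborate argument.

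You claim that for $\mu$-a.e.\ $x$ there is a single \emph{ergodic} CP-distribution $\mathbf{Q}_x$ arising from the scenery $(\mu^{\D_n(x)})_n$ with $\dim\mathbf{Q}_x\ge\alpha$ and $\overline{D}(\pi\mu,\pi(x))\ge\dim\pi\mathbf{Q}_x$ for every $\pi$. This does not hold, and cannot be extracted from the hypothesis. Under upper uniform entropy dimension the scenery need not converge; Lemma \ref{lemma:unif-dim-meas-generate-unif-dim-CP-dist} only produces a subsequential limit $Q_x$, and this $Q_x$ is a CP-distribution but is in general \emph{not ergodic}. One cannot fix this by simply passing to an ergodic component: writing $Q_x=\int Q_x^\omega\,d\tau(\omega)$, the condition $\dim Q_x^\omega\ge\alpha-\epsilon$ singles out a $\tau$-positive but possibly proper subset of components, while the bound $\int\dim\pi Q_x^\omega\,d\tau(\omega)\le\overline{D}(\pi\mu,\pi(x))$ is only an average and says nothing about any fixed component across all $\pi$ simultaneously. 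There is no reason a single $\omega$ should satisfy both $\dim Q_x^\omega\ge\alpha$ and $\dim\pi Q_x^\omega\le\overline{D}(\pi\mu,\pi(x))$ for every direction $\pi$, which is what your Fubini argument requires. (There is also a secondary measurability issue in defining $E=\{\pi:\mu(x:\pi\in E_x)>0\}$ once $\mathbf{Q}_x$ is merely one of many subsequential limits, but the ergodicity problem is the more fundamental one.)

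The paper circumvents this by not attempting to produce any single ergodic $\mathbf{Q}_x$. Instead it runs a contradiction argument with Egorov and ergodic decomposition: assuming an uncountable exceptional set carrying a non-atomic $\nu$, it combines the upper bound $\int\dim\pi\eta\,dQ_x(\eta)\le\min(1,\alpha)-\delta/4$ (valid on a $\nu$-positive set of $\pi$) with $Q_x(\eta:|\dim\eta-\alpha|<\epsilon_1)>1-\epsilon_1$, and averages over the ergodic decomposition to find one component $Q_x^\omega$ for which the set $\{\pi:\dim\pi Q_x^\omega<\min(1,\dim Q_x^\omega)-\delta'\}$ has positive $\nu$-measure, hence is uncountable. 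This is what contradicts Theorem \ref{thm:proj-thm-cp-distribution:-2}. Your outline would become correct if you replaced the claimed pointwise ergodic $\mathbf{Q}_x$ with precisely this averaging and decomposition step; as written, the existence of $\mathbf{Q}_x$ is asserted but not available.
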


Many measures in fractal geometry and dynamical systems satisfy the assumption of uniform entropy dimensionality. These include, for example, all self-similar and self-conformal measures, many self-affine measures, all Ahlfors-David regular measures, and numerous dynamically defined Cantor measures (such as $\times p$-invariant ergodic measures on the torus).

We note that under the assumption of Theorem \ref{thm:main:-1},  our conclusion only concerns the Packing dimensions of the projected measures.  Indeed, one cannot strengthen our result by replacing the packing dimension with the Hausdorff dimension in the conclusion—there exists a measure with positive uniform entropy dimension but zero Hausdorff dimension.
It is worth pointing out that in many situations when our theorem is applicable, the projected measures usually have equal Hausdorff and Packing dimensions.    In such situation, our results do give information on Hausdorff dimension of projections.
As with the case of previous variants of Marstrand's projection theorem,  we anticipate that our results will have applications for various problems.  

Our second application of Theorem \ref{thm:proj-thm-cp-distribution:-2} is a sharp projection theorem for Assouad dimension of general planar sets.  We use $\dim_{\rm A}F$ to denote the Assouad dimension of a set $F$.  

\begin{theorem}\label{thm:proj-Assouad:-2}
Let $F \subset \R^2$ be any non-empty set. Then the following set is at most countable
\[
\{\pi\in G(2,1): \dim_{\rm A}\pi  F<\min(1,\dim_{\rm A}F)\}.
\]
\end{theorem}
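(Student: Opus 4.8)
The plan is to reduce the statement to the CP-distribution projection theorem, Theorem~\ref{thm:proj-thm-cp-distribution:-2}, via the characterisation of the Assouad dimension by weak tangents (microsets). Replacing $F$ by its closure changes neither $\dim_{\rm A}F$ nor $\dim_{\rm A}\pi F$, so I may assume $F$ closed; set $s=\dim_{\rm A}F$, the case $s=0$ being trivial. The exceptional set $\{\pi\in G(2,1):\dim_{\rm A}\pi F<\min(1,s)\}$ equals $\bigcup_{t}\mathcal E_t$, where $t$ ranges over the rationals in $(0,\min(1,s))$ and $\mathcal E_t:=\{\pi\in G(2,1):\dim_{\rm A}\pi F<t\}$; so it suffices to prove that each $\mathcal E_t$ is at most countable.

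Fix such a $t$. The first step is to realise $t$ by an ergodic CP-distribution carried by microsets of $F$. By the weak-tangent characterisation of the Assouad dimension there is a microset $E\subseteq[0,1]^2$ of $F$ — a Hausdorff limit of sets $(\lambda_kF+b_k)\cap[0,1]^2$ with $\lambda_k\to\infty$ — with $\dim_{\rm H}E=s>t$; recall also that a microset of a microset is a microset. By Frostman's lemma pick a compact $E_0\subseteq E$ carrying a Borel probability measure $\mu$ with $\mu(B(x,r))\le Cr^{t}$ for all $x,r$. Generating a CP-distribution from $\mu$ in the usual Ces\`aro sense (see \cite{HS2012}) — averaging the empirical distributions of the dyadic components $\mu^{\D_n(x)}$ and taking a weak-$*$ limit — produces a CP-distribution $P$ on $\R^2$ with respect to the dyadic partition; the Frostman bound gives $H(\mu,\D_N)\ge tN-O(1)$, hence $\dim P\ge t$. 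Moreover each $\mu^{\D_n(x)}$ is supported on a set $(2^nE_0+b)\cap[0,1]^2$, a miniset of $E_0$; since being supported on a miniset of $E_0$ is preserved under weak-$*$ limits, $P$-a.e.\ measure is supported on a microset of $E_0$, hence on a microset of $F$. Passing to an ergodic component $Q$ of $P$ yields an ergodic CP-distribution with $\dim Q\ge\dim P\ge t$, still carried by measures supported on microsets of $F$.

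Next I would apply Theorem~\ref{thm:proj-thm-cp-distribution:-2} to $Q$: the set $\mathcal E'_t:=\{\pi:\dim\pi(Q)<\min(1,\dim Q)\}$ is at most countable, and I claim $\mathcal E_t\subseteq\mathcal E'_t$. Let $\pi\notin\mathcal E'_t$, so $\dim\pi(Q)\ge\min(1,\dim Q)\ge t$. Since $\dim\pi(Q)$ is the $Q$-average of $\dim_{\rm H}(\pi\nu)$, for every $\eta>0$ there is a set of $\nu$'s of positive $Q$-measure with $\dim_{\rm H}(\pi\nu)>t-\eta$; fix such a $\nu_0$ and a microset $M$ of $F$ supporting $\nu_0$. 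As $\pi$ intertwines a homothety $x\mapsto\lambda x+b$ of $\R^2$ with the homothety $u\mapsto\lambda u+\pi b$ of $\R$ of the same ratio, and $\pi([0,1]^2)$ is an interval, a fixed affine rescaling of that interval onto $[0,1]$ carries $\pi M$ into a weak tangent of $\pi F$; therefore $\dim_{\rm A}\pi F\ge\dim_{\rm H}(\pi M)\ge\dim_{\rm H}(\pi\nu_0)>t-\eta$. Letting $\eta\to0$ gives $\dim_{\rm A}\pi F\ge t$, i.e.\ $\pi\notin\mathcal E_t$. Hence $\mathcal E_t\subseteq\mathcal E'_t$ is at most countable, and the theorem follows.

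The geometric-measure-theoretic ingredients above — the weak-tangent characterisation of $\dim_{\rm A}$, Frostman's lemma, and the behaviour of minisets and microsets under orthogonal projection — are routine. The main obstacle will be the second paragraph: the assertion that \emph{the Assouad dimension of $F$ is attained, up to any $\epsilon>0$, by an ergodic CP-distribution carried by microsets of $F$}. One must check that the CP-distribution generated by a Frostman measure on an extremal microset has dimension at least $t$, that its typical measures are genuinely supported on microsets of $F$, and that both properties survive the passage to an ergodic component. With this in hand, Theorem~\ref{thm:proj-thm-cp-distribution:-2} supplies the remainder.
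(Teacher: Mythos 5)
Your overall strategy mirrors the paper's: both proofs pass to a microset of $F$, exhibit an ergodic CP-distribution carried on microsets with dimension close to $\dim_{\rm A}F$, invoke Theorem~\ref{thm:proj-thm-cp-distribution:-2}, and finally pull the lower bound back to $\dim_{\rm A}\pi F$ using the fact that projections of microsets are (up to a fixed rescaling) microsets of the projection. The paper, however, imports the crucial existence statement wholesale as Lemma~\ref{lemma:proof-proj-Assouad:1}(2) -- citing Furstenberg and Fraser's book for an ergodic CP-distribution generated pointwise by a measure on an extremal microset with $\dim Q \ge \dim_{\rm A}F$ -- and uses \cite[Theorem 8.1]{HS2012} for the inequality $\dim_{\rm H}\pi\mu\ge\dim\pi Q$. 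You instead re-derive this machinery from scratch (Frostman measure on a maximal weak tangent, Ces\`aro averaging of sceneries, passage to an ergodic component), which is more self-contained and arguably more instructive, at the cost of only getting dimension $\ge t$ for each rational $t<\min(1,\dim_{\rm A}F)$ -- but that is enough.

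Two steps in your write-up are stated more strongly than what is actually true and need patching, though neither is fatal. First, the inference ``$H(\mu,\D_N)\ge tN-O(1)$, hence $\dim P\ge t$'' is not immediate: the dimension of the CP-distribution generated at a \emph{fixed} $\mu$-typical $x$ is the Ces\`aro limit $\lim_N \frac{1}{N}\sum_{k\le N} H(\mu^{\D_k(x)},\D_1)$, whereas the entropy bound controls only the $\mu$-average of these quantities over $x$ (via the telescoping identity). You need to argue that since the $\mu$-average is $\ge t-o(1)$, some positive-$\mu$-measure set of $x$'s produces $\dim P_x\ge t$, and then choose such an $x$. Second, ``an ergodic component $Q$ of $P$'' does not automatically satisfy $\dim Q\ge\dim P$; since $\dim P$ is the $\tau$-average of $\dim Q_\omega$ over the ergodic decomposition, there is at least one (indeed a positive-$\tau$-measure set of) $\omega$ with $\dim Q_\omega\ge t$, and since the property ``$Q_\omega$-a.e.\ measure is supported on a microset of $F$'' holds for $\tau$-a.e.\ $\omega$, you can find an $\omega$ satisfying both simultaneously. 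With these two corrections your argument is complete and offers a nice alternative to citing the Furstenberg/Fraser microset lemma as a black box.
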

For more details about Assouad dimension,  we refer to Section \ref{section: assouad-projection:0}.

\begin{remark}
While preparing this preprint for publication, we learned that T. Orponen \cite{Orponen-2024} has obtained related results on Theorem \ref{thm:main:-1}. As a corollary of his results, Orponen showed that if the measure $\mu$ is Ahlfors-regular, then the exceptional set in Theorem \ref{thm:main:-1} has Hausdorff dimension zero. From this perspective, Orponen’s results appear weaker than ours; however, he also provides a discretized and quantitative version \cite[Theorem 1.3]{Orponen-2024}, which seems to be of independent interest and cannot be obtained using our methods.  Orponen’s approach differs from the one used in this paper.  
\end{remark}

\subsection{Precise statements of results}\label{section: precise-statements-results:0}
\subsubsection{Self-similar sets and measures}\label{section: result-projection-measure-self-similar-IFS:1}

Let $J\subset \R$ be an open interval. For each $t\in J$, let $\F_t=\{f_{i,t}(x)=r_{i,t}x+s_{i,t}\}_{i\in \Lambda}$ be an affine IFS on $\R$.  We shall use the metric $d$ on $\Lambda^\N$ defined by $d(x,y)=2^{-|x\wedge y|}$,  where $x\wedge y$ is the longest common initial segment of $x$ and $y$ and $|x\wedge y|$ is the length of the segment (i.e.,  $|x\wedge y|=\min(k:x_k\neq y_k)-1$).  For $x,y\in \Lambda^\N$, let $\Delta_{x,y}(t)=f_{x,t}(0)-f_{y,t}(0)$,  where $f_{x,t}(0)=\lim_{n\to \infty} f_{x_1,t}\circ\ldots \circ f_{x_n,t}(0)$.    

We always assume that for all $i\in \Lambda$, the maps $t\mapsto r_{i,t}$ and  $t\mapsto s_{i,t}$ are $C^2$ on $J$ and  for any compact $J'\subset J$  there exist $C>0$ such that 
\begin{equation}\label{eq:definition-IFS-regularity:1}
\left|\frac{d^k}{dt^k}f_{x,t}(0)\right| \le C 
\end{equation}
for all $x\in \Lambda^\N,  t\in J'$ and $k=1,2$.
\begin{definition}\label{definition:transversality:1}
The family $\{\F_t\}_{t\in J}$ satisfies the $\beta$-transversality condition on $J$ if there exist constants $\beta\ge 0$ and  $C_\beta$ such that for any $t\in J$ and $x,y\in \Lambda^\N$,  the condition $|\Delta_{x,y}(t)|\le C_\beta d(x,y)^\beta$ implies 
\begin{equation}\label{eq:definition-tranversal:1}
|\Delta_{x,y}'(t)|\ge C_\beta d(x,y)^\beta.  
\end{equation}
\end{definition}

This notion of transversality was introduced in \cite{Peres-Schlag}. In the literature, it is often assumed that $\beta\le 1$,  however, we do not impose this requirement in this paper. Clearly, the larger $\beta$  is, the weaker the transversality condition becomes.

Let us restate our result on parametric family of self-similar measures (Theorem \ref{thm:main:-2})  
\begin{theorem}\label{thm:proj-parametric-self-similar:1}
Let $J$ be an open  interval, and for each $t\in J$, let $\F_t$ be an IFS on $\R$. For a given probability vector $p=(p_i)_{i\in \Lambda}$, let $\mu_{p,t}$ be the self-similar measure associated to $\F_t$ and $p$.  Suppose that the family $\{\F_t\}_{t\in J}$ satisfies the $\beta$-transversality condition.  Then the following set is at most countable:
\[
\{t\in J: \dim \mu_{p,t}<\min (1,\sd \mu_{p,t})\}.
\]
\end{theorem}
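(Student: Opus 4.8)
Theorem~\ref{thm:proj-parametric-self-similar:1} is a verbatim restatement of Theorem~\ref{thm:main:-2}, which has already been deduced from Theorem~\ref{thm:main:pertubate-increase:1}; accordingly, the plan is to prove the ``pointwise dimension jump'' of Theorem~\ref{thm:main:pertubate-increase:1}. Fix $t_0 \in J$ with $\dim\mu_{p,t_0} = \alpha < \min(1,\sd\mu_{p,t_0})$. Since each $t \mapsto r_{i,t}$ is $C^2$, the map $t \mapsto \sd\mu_{p,t}$ is continuous, so we may fix $\eta > 0$ and a compact subinterval $J' \subset J$ with $t_0 \in \mathrm{int}\, J'$ such that $\min(1,\sd\mu_{p,t}) \ge \alpha + 3\eta$ for all $t \in J'$. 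Arguing by contradiction, suppose the conclusion fails; negating the quantifiers produces, for each $m$, a point $t_m \in (B(t_0,1/m)\cap J)\setminus\{t_0\}$ with $\dim\mu_{p,t_m} < \alpha + 1/m$, and passing to a subsequence we obtain $t_k \to t_0$ with $t_k \ne t_0$, $t_k \in J'$, and
\[
\dim\mu_{p,t_k} \le \alpha + \eta < \min(1,\sd\mu_{p,t_k}) - \eta
\]
for all large $k$. It suffices to derive a contradiction from the existence of such a sequence.

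Two ingredients of Hochman's machinery \cite{Hochman2014} will be used. Write $\varphi_{\bi,t} = f_{i_1,t}\circ\cdots\circ f_{i_\ell,t}$ for $\bi\in\Lambda^\ell$, let $\nu_{\ell,t} = \sum_{\bi\in\Lambda^\ell} p_{\bi}\,\delta_{\varphi_{\bi,t}(0)}$ be the level-$\ell$ atomic measure, and recall that self-similar measures are exact dimensional with $\dim\mu_{p,t} = \lim_m m^{-1}H(\mu_{p,t},\D_m)$. \emph{(A)} Since $\dim\mu_{p,t} < \min(1,\sd\mu_{p,t})$ at $t = t_0$ and at each $t = t_k$, Hochman's theorem gives super-exponential concentration of level-$\ell$ cylinders: $-\ell^{-1}\log\rho_\ell(t)\to\infty$, where $\rho_\ell(t) = \min\{|\varphi_{\bi,t}(0) - \varphi_{\bj,t}(0)| + |r_{\bi,t} - r_{\bj,t}| : \bi \ne \bj \in \Lambda^\ell\}$. \emph{(B)} The self-similar form of Hochman's inverse theorem for entropy provides, for each $\epsilon > 0$, a $\delta = \delta(\epsilon) > 0$ such that a dimension deficit of at least $\epsilon$ forces, at all but an $\epsilon$-proportion of levels $\ell$, the atomic measure $\nu_{\ell,t}$ to be carried up to $p$-mass $1-\epsilon$ by at most $2^{\epsilon\ell}$ clusters, each of diameter $\le 2^{-\ell/\delta}$. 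We apply (B) at $t_0$ with $\epsilon$ chosen small relative to $\eta$.

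The core of the argument is to transport the rigid clustering of the $\F_{t_0}$-cylinders to a lower bound for $\dim\mu_{p,t_k}$, using the $\beta$-transversality (Definition~\ref{definition:transversality:1}) together with the $C^2$-bound \eqref{eq:definition-IFS-regularity:1}; transversality enters precisely as the hypothesis forbidding such clustering from persisting off $t_0$. If $\bi \ne \bj \in \Lambda^\ell$ lie in a common $t_0$-cluster, then $|\varphi_{\bi,t_0}(0) - \varphi_{\bj,t_0}(0)| \le 2^{-\ell/\delta}$, which (since $d(\bi\bar 1,\bj\bar 1) \ge 2^{-\ell}$, with $\bar 1$ the constant sequence, and $1/\delta > \beta$ for $\ell$ large) forces $|\Delta_{\bi\bar 1,\bj\bar 1}'(t_0)| \ge C_\beta 2^{-\beta\ell}$ through Definition~\ref{definition:transversality:1}; combined with the uniform second-derivative bound \eqref{eq:definition-IFS-regularity:1}, the separation of these two words opens up to order $2^{-\beta\ell}\,|t - t_0|$ as $t$ leaves $t_0$, provided $|t - t_0| \lesssim 2^{-\beta\ell}$. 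Since $|t_k - t_0| \to 0$, this resolves the level-$\ell$ structure of $\F_{t_k}$ at every scale $\ell$ up to $n_k \asymp \log(1/|t_k - t_0|)$, and $n_k \to \infty$.

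Turning this resolution-up-to-$n_k$ into a contradiction with $\dim\mu_{p,t_k} \le \alpha + \eta$ is the step I expect to be the main obstacle. At a fixed scale $\ell$ comparable to $n_k$, resolution of the cylinders means the convolution representation $\mu_{p,t_k} \approx \nu_{\ell,t_k}\ast(\text{a measure of size} \lesssim 2^{-c\ell})$ loses no entropy, so that $\ell^{-1}H(\mu_{p,t_k},\D_{C\ell}) \ge \min(1,\sd\mu_{p,t_k}) - O(\epsilon)$ for a suitable constant $C$; one then bootstraps this single-scale estimate to the limit $\dim\mu_{p,t_k} = \lim_m m^{-1}H(\mu_{p,t_k},\D_m)$ by replicating the resolved structure at the scales $2n_k, 3n_k,\dots$ via the exact self-similarity $\mu_{p,t_k} = \sum_{\bi\in\Lambda^\ell} p_{\bi}\,\varphi_{\bi,t_k}\mu_{p,t_k}$, obtaining $\dim\mu_{p,t_k} \ge \min(1,\sd\mu_{p,t_k}) - O(\epsilon) > \alpha + \eta$ once $\epsilon$ is small, the desired contradiction. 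The delicate points are: carrying out the entropy bookkeeping so that ``clusters open up'' becomes a genuine entropy gain at scales $\asymp n_k$ uniformly in $k$ — in particular controlling the geometry of the clusters well enough that near-collisions between distinct $t_0$-clusters at the perturbed parameter do not cancel the gain; handling the $\epsilon$-proportion of exceptional scales and the $\epsilon$-fraction of $p$-mass outside the clusters, for which (A) at $t_k$ and a Fubini/transversality average of Peres--Schlag type \cite{Peres-Schlag} should suffice; and making the self-similar bootstrap robust, which requires the resolution at scale $n_k$ to survive composition.
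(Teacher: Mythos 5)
Your reduction to Theorem~\ref{thm:main:pertubate-increase:1} is exactly the paper's, and your two ingredients---Hochman's inverse theorem for entropy and the transversality condition forcing clusters of cylinder endpoints to separate under perturbation---are the right ones. The gap lies in the step you yourself flag as the ``main obstacle'', namely converting ``clusters open up'' into a lower bound for $\dim\mu_{p,t_k}$, and the intermediate target you aim at there is wrong: you claim that resolving the level-$\ell$ cylinders at some scale $2^{-C\ell}$ should force $\ell^{-1}H(\mu_{p,t_k},\D_{C\ell}) \ge \min(1,\sd\mu_{p,t_k}) - O(\epsilon)$. This is far too strong. Even if \emph{every} pair of distinct depth-$\ell$ cylinders at $t_k$ were separated by $2^{-C\ell}$ with $C$ a fixed constant (and transversality only gives separation $\gtrsim |t_k-t_0|\cdot 2^{-\beta\ell}$, so $C$ cannot be taken close to $1$), the entropy of $\mu_{p,t_k}$ at scale $\D_{C\ell}$ decomposes as roughly $\ell\,\sd\mu_{p,t_k}$ from the atomic part plus $(C-1)\ell\,\dim\mu_{p,t_k}$ from within the cylinders; dividing by $C\ell$ gives $\frac{\sd + (C-1)\dim\mu_{p,t_k}}{C}$, which is close to $\min(1,\sd)$ only if $\dim\mu_{p,t_k}$ already is---a circularity. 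The self-similar bootstrap you invoke cannot repair this, since the Peres--Solomyak bound $\dim\eta \ge k^{-1}H(\eta,\D_k) - C/k$ does not improve on whatever single-scale estimate you feed it.

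What the paper actually proves is much weaker, and that weakness is essential: only $\dim\mu_{p,t} \ge \dim\mu_{p,t_0} + \delta$ for some fixed small $\delta>0$, not near-recovery of the similarity dimension. The mechanism is Hochman's inverse theorem used in contrapositive form: one first shows (via the uniform entropy dimension of $\mu_{p,t_0}$ and a Markov-type selection, rather than your super-exponential concentration statement (A)) that for a positive mass of dyadic cells $Q\in\D_n$ the atomic measure $\frac{1}{a_Q}\sum_{I\in A_Q}p_I\delta_{f_{I,t_0}(0)}$ has cylinder-entropy $\gtrsim\delta_1 n$ but entropy $o(n)$ at scale $\D_{3nm}$; then transversality (Lemma~\ref{lemma:pf-thm-pertubate-increase-2:4}) gives that the corresponding atomic measure at parameter $t$ has entropy $\gtrsim\rho_1 n$ at scale $\D_{3nm}$; and finally one applies the inverse theorem again, in the form ``if the convolution $\eta*r\mu_t$ had only entropy $\dim\mu_{t_0}+\tau$ per level, $\eta$ would have to be concentrated'', to conclude that $H(\nu_Q^t,\D_{3nm})$ \emph{exceeds} $(3m-1)n\dim\mu_{t_0}$ by a definite amount $\delta_2 n$. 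That last contrapositive use of the inverse theorem is precisely what your proposal lacks; without it, the separation produced by transversality has no route into an entropy lower bound.
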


Let us present a class of IFSs for which the transversality condition is (partially) satisfied.  Let $n\ge 2$ and $t_1,\ldots,t_n\in \R$. For $\lambda\in (0,1)$,  consider the IFS $\F_\lambda=\{f_{i,\lambda}(x)=\lambda x+t_i\}_{i=1}^n$.  Given probability vector $p=(p_i)_{i=1}^n$, let $\nu_{p,\lambda}$ be the self-similar measure associated to $\F_\lambda$ and $p$.

When $n=2$ and $p=(\frac{1}{2},\frac{1}{2})$, the measures $\nu_{p,\lambda}$ correspond to the classical Bernoulli convolutions.  Varj\'u \cite{Varju-Annals} proved the exact overlaps conjecture in this case, implying that the exceptional parameters $\lambda$ such that $\dim \nu_\lambda<\min(1,\sd \nu_\lambda)$ form at most a countable set and must satisfy certain algebraic equations (i.e., there exist $x_1^n\neq y_1^n\in \Lambda^n$ such that $f_{x_1^n,\lambda}(0)=f_{y_1^n,\lambda}(0)$).   The proof of Varj\'u combines several deep results on self-similar measures,   which consist of hard analysis.  By applying Theorem \ref{thm:proj-parametric-self-similar:1} and leveraging existing transversality results for Bernoulli convolutions \cite{Solomyak-Annals, Peres-Solomyak-simple}, we provide a soft alternative proof that the set of exceptional parameters remains at most countable.

We now suppose $n\ge 3$.  Let $b=\frac{\max_{i\neq j}|t_i-t_j|}{\min_{i\neq j}|t_i-t_j|}$.  By \cite[Corollary 5.2]{Peres-Solomyak-1998}  and \cite[Lemma 5.9]{Peres-Schlag},  for any $\epsilon>0$,  there exist $\beta>0$ such that the IFSs $\{\F_{\lambda}\}_\lambda$ satisfy the $\beta$-transversality condition on the interval $[\epsilon,(1+\sqrt{b})^{-1}]$.  It follows that, by Theorem \ref{thm:proj-parametric-self-similar:1}, the following set is at most countable:
\[
\left\{\lambda\in \left(0,(1+\sqrt{b})^{-1}\right): \dim \nu_{p, \lambda}<\min(1,\sd \nu_{p, \lambda})\right\}.
\]

\subsubsection{Measures with uniform entropy dimension}\label{section: meas-uniform-ent-dimension:0}

\begin{definition}\label{def:upper-unif-entrop-dim:1}
Let $\mu\in \cP(\R^d)$. The {\em upper uniform entropy dimension} of $\mu$ is defined to be the supremum of $\alpha\ge 0$ such that 
\begin{equation}\label{eq:def:upper-unif-entrop-dim:1}
\liminf_{\epsilon\to0}\limsup_{l\to\infty}\limsup_{n\to\infty}\mu\left( x:\left| \left\{ 1\le k\le n: \left|H(\mu^{\D_k(x)},\D_l)-l\alpha\right| \le l \epsilon \right\} \right| \ge n(1-\epsilon) \right) =1.
\end{equation} 
\end{definition}
Here $H(\eta,\mathcal{A})$ denotes the Shannon entropy of a measure $\eta$ with respect to a partition $\mathcal{A}$.
Thus,  if $\mu$ has  upper uniform entropy dimension $\alpha$, then for any $\epsilon>0$,  there exist $l\in \N$ such that 
\begin{equation}\label{eq:def:upper-unif-entrop-dim:2}
\limsup_{n\to\infty}\mu\left( x:\left| \left\{ 1\le k\le n: \left|H(\mu^{\D_k(x)},\D_l)-l\alpha\right| \le l \epsilon \right\} \right| \ge n(1-\epsilon) \right) \ge 1-\epsilon.
\end{equation}

As we mentioned above, many measures with dynamical origins or regular geometric structures are uniform entropy dimensional. 
\begin{remark}\label{remark:1}
\begin{itemize}
\item[(1)]  Uniform entropy dimension first appeared in work of Hochman \cite{Hochman2014}, a measure $\mu\in \cP(\R^d)$ has uniform entropy dimension $\alpha$ of for any $\epsilon>0$ exists $l\in \N$ such that 
\[
\liminf_{n\to\infty}\mu\left( x: \left| \left\{ 1\le k\le n: \left|H(\mu^{\D_k(x)},\D_l)-l\alpha\right| \le l \epsilon \right\} \right| \ge n(1-\epsilon) \right) \ge 1-\epsilon.
\]
It is clear that if a measure has uniform entropy dimension $\alpha$, then it has upper uniform entropy dimension at least $\alpha.$
\item[(2)] Recall that the Packing dimension of measure $\eta\in \cP(\R^d)$ is defined as
\[
\dimP \eta=\inf\{ \dimP A: \eta(A)>0 \}.
\]
 If a measure has upper uniform entropy dimension $\alpha$,   then its packing dimension is at least $\alpha$.  The  upper uniform entropy dimension of a measure doesn't tell anything about  the Hausdorff dimension of the measure: for any $\alpha\ge 0$,  there exists measure whose upper uniform entropy dimension is $\alpha$ but whose Hausdorff dimension is zero.
\end{itemize}
\end{remark}

\begin{theorem}\label{thm:main-proj-upper-unif-entropy-dim}
Let $\mu\in \cP(\R^2)$. Suppose that $\mu$ has upper uniform entropy dimension $\alpha$.
Then the following set is at most countable
\begin{equation}\label{eq:thm:main-proj-upper-unif-entropy-dim:1}
\{\pi\in G(2,1): \dimP \pi\mu <\min (1,\alpha)\}.
\end{equation}
\end{theorem}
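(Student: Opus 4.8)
The plan is to deduce Theorem~\ref{thm:main-proj-upper-unif-entropy-dim} from the projection theorem for ergodic CP-distributions (Theorem~\ref{thm:proj-thm-cp-distribution:-2}) via the transference between measures of uniform entropy dimension and CP-distributions, in the spirit of Hochman and Shmerkin~\cite{HS2012}. It suffices to produce, for each $j\in\N$, an \emph{ergodic} CP-distribution $Q_j$ on $\R^2$ with respect to the dyadic partition such that
\begin{equation}\label{eq:plan-transference}
\dim Q_j\ \ge\ \alpha-\tfrac1j
\qquad\text{and}\qquad
\dim_{\rm P}\pi\mu\ \ge\ \dim\pi_\theta Q_j\ \text{ for every }\pi\in G(2,1).
\end{equation}
Granting this, the theorem follows: if $\dim_{\rm P}\pi\mu<\min(1,\alpha)$ then, since $\min(1,\alpha-\tfrac1j)\to\min(1,\alpha)$, for all large $j$ we get $\dim\pi_\theta Q_j\le\dim_{\rm P}\pi\mu<\min(1,\dim Q_j)$, so $\pi$ is exceptional for $Q_j$; hence the set in \eqref{eq:thm:main-proj-upper-unif-entropy-dim:1} is contained in a countable union of exceptional sets of ergodic CP-distributions, each at most countable by Theorem~\ref{thm:proj-thm-cp-distribution:-2}, and is therefore at most countable.

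To build $Q_j$, fix a small $\epsilon=\epsilon(j)>0$ and take $l=l(\epsilon)\in\N$ from the definition of upper uniform entropy dimension, so that on a set of $x$ of $\mu$-measure close to $1$ a proportion close to $1$ of the scales $k\le n$ (for arbitrarily large $n$) satisfy $|H(\mu^{\D_k(x)},\D_l)-l\alpha|\le l\epsilon$. I would then run the standard generation procedure for CP-distributions: average the recentred components $\tfrac1N\sum_{n=1}^N\delta_{\mu^{\D_n(x)}}$ (keeping the marked base point so that the Markov/adaptedness structure is retained), integrate over $x\sim\mu$, and pass to a weak-$*$ accumulation point $P$ as $N\to\infty$; this $P$ is a CP-distribution with respect to the dyadic partition. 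By the standard relation between the dimension of a generated CP-distribution and the average first-level entropies of its generating components, the entropy concentration above forces $\dim P\ge\alpha-C\epsilon$ for an absolute constant $C$, and choosing $\epsilon$ small gives $\dim P\ge\alpha-\tfrac1j$. Finally, since $\dim P$ is the $P$-average of the dimensions of its ergodic components, some ergodic component $Q_j$ satisfies $\dim Q_j\ge\dim P\ge\alpha-\tfrac1j$.

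The substantive point is the second half of \eqref{eq:plan-transference}: the transference inequality $\dim_{\rm P}\pi\mu\ge\dim\pi_\theta Q_j$ for \emph{every} $\pi$. I would establish this, for any ergodic CP-distribution $Q$ that $\mu$ generates, by the local entropy averages method of Hochman--Shmerkin~\cite{HS2012} in its packing-dimension form. Roughly, for $\mu$-almost every $x$ the upper local dimension of $\pi\mu$ at $\pi x$ dominates, up to an error $o_l(1)$, the $\limsup$ over $N$ of the Cesàro averages $\tfrac1N\sum_{n\le N}\tfrac1l H(\pi(\mu^{\D_n(x)}),\D_l)$; since $\dim_{\rm P}\pi\mu$ equals the essential infimum over $y$ of the upper local dimension of $\pi\mu$ at $y$, and since $\dim\pi_\theta Q$ is, up to $o_l(1)$, read off from these same averages along the scales generating $Q$, letting $l\to\infty$ (jointly with $\epsilon\to0$, indexed by $j$) gives $\dim_{\rm P}\pi\mu\ge\dim\pi_\theta Q_j$. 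Combining this with Theorem~\ref{thm:proj-thm-cp-distribution:-2} applied to each $Q_j$ completes the argument.

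I expect the main obstacle to be precisely this transference and its uniformity in $\pi$. Several points need care: (a) the image $\pi(\D_n(x))$ of a dyadic square is an interval that is not dyadic, so passing between $H(\pi(\mu^{\D_n(x)}),\D_l)$ and the entropies of honest dyadic components of $\pi\mu$ requires a translation/averaging argument over the grid; (b) the CP-distribution must be generated over the full range of scales rather than a sparse subsequence, so that for every direction $\pi$ the scales along which $\pi\mu$ loses projection-entropy are recorded with the correct frequency; (c) only the packing dimension survives, since the local dimensions of $\mu$ may oscillate around its entropy dimension and the $\liminf$ defining Hausdorff dimension can be strictly smaller --- this is why the statement cannot be upgraded to $\dim_{\rm H}$; (d) obtaining an ergodic CP-distribution that simultaneously has dimension $\ge\alpha-\tfrac1j$ and satisfies the transference forces one to argue on $\mu$-typical orbits and to check that the transference, being a $\mu$-almost everywhere statement about local entropy averages, is compatible with passing to ergodic components. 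A lesser point is that ``upper'' uniform entropy dimension gives only the $\limsup$-type bound $\dim Q_j\ge\alpha-\tfrac1j$ rather than $\dim Q\ge\alpha$ for a single $Q$, hence the use of the sequence $(Q_j)$ and the countable union.
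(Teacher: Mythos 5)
The top-level ``granting'' reduction is fine, and your recognition in point (d) that the transference is the obstacle is accurate, but you do not resolve it, and I do not see how your route can.

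The crux is the claim that one can build, for each $j$, a \emph{single} ergodic CP-distribution $Q_j$ (independent of $\pi$ and of $x$) with both $\dim Q_j\ge\alpha-1/j$ and $\dim_{\rm P}\pi\mu\ge\dim\pi Q_j$ for \emph{every} $\pi\in G(2,1)$. The local entropy averages inequality you invoke (cf.\ \cite[Thm.~5.4]{HS2012}, used as \eqref{eq:proof:thm-proj-upp-uni-dim-by-proj-cp-dist: 3} in the paper) is a per-$x$, per-$\pi$ statement: for a fixed $\pi$, $\overline{D}(\pi\mu,\pi(x))$ dominates (up to $o_l(1)$) the $\limsup$ of $\frac1N\sum_{n\le N}\frac1lH(\pi(\mu^{\D_n(x)}),\D_l)$, and $\dim_{\rm P}\pi\mu$ is the essential infimum over $x$ of $\overline{D}(\pi\mu,\pi(x))$. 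This gives control only on a positive-measure set of $x$ that depends on $\pi$. If you then take $Q_j$ to be (an ergodic component of) the $x$-integrated weak-$*$ accumulation point $P$, you lose the per-$x$ information: $\dim\pi P$ is (morally) the $\mu$-average of the per-$x$ Cesàro averages, which can sit strictly above the essential infimum $\dim_{\rm P}\pi\mu$, so the transference inequality can fail. If instead you take $Q_j=Q_{x_0}$ for a fixed $x_0$, then for directions $\pi$ at which $x_0$ is atypical (i.e.\ $\overline{D}(\pi\mu,\pi(x_0))>\dim_{\rm P}\pi\mu$) the inequality again need not hold. There is also a second problem you gesture at but do not fix: even if $\dim_{\rm P}\pi\mu\ge\dim\pi P$ held for the non-ergodic $P$, passing to the ergodic decomposition $P=\int Q^\omega\,d\tau(\omega)$ gives only $\dim_{\rm P}\pi\mu\ge\int\dim\pi Q^\omega\,d\tau(\omega)$, not $\dim_{\rm P}\pi\mu\ge\dim\pi Q^\omega$ for the particular $\omega$ whose dimension is close to $\alpha$; and there is no reason the ``high-dimension'' component and the ``transference-friendly'' component should coincide, uniformly in $\pi$.

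The paper avoids exactly this obstruction by not trying to manufacture a $\pi$-uniform transference inequality at all. It argues by contradiction: put a non-atomic probability measure $\nu$ on the (assumed uncountable) exceptional set, apply Egorov and Fubini in the two variables $(x,\pi)$ to produce a positive-$\mu$-measure set $F$ of points $x$ which are simultaneously ``bad'' (the projected upper local dimension is small) for a $\nu$-positive set $E^0_x$ of directions, use Lemma~\ref{lemma:unif-dim-meas-generate-unif-dim-CP-dist} to generate a (generally $x$-dependent, non-ergodic) CP-distribution $Q_x$ with $Q_x$-typical dimension near $\alpha$, and combine the two pieces of information for a \emph{fixed} $x\in F$ to deduce $\int\dim\pi\eta\,dQ_x(\eta)\le\min(1,\alpha)-\delta/4$ for $\pi$ in a $\nu$-positive set. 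Only then does it disintegrate $Q_x$ into ergodic components and run Fubini once more to extract a single ergodic component $Q_x^\omega$ whose projection dimension drops on a $\nu$-positive, hence uncountable, set of directions — contradicting Theorem~\ref{thm:proj-thm-cp-distribution:-2}. So the role played by the measure $\nu$ on the exceptional set, and the two applications of Fubini to decouple $x$ from $\pi$, are not optional technicalities: they are precisely what replaces the uniform transference you would need, and your plan does not supply a substitute.
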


\begin{remark}
As mentioned in Remark \ref{remark:1},  a measure $\mu$ with  upper uniform entropy dimension $\alpha>0$ could have zero Hausdorff dimension,  hence in \eqref{eq:thm:main-proj-upper-unif-entropy-dim:1}, one cannot replace $\dimP \pi\mu$ by $\dimH \pi\mu$. 
\end{remark}

\subsubsection{Assouad dimension of projections}\label{section: assouad-projection:0}
As another corollary of Theorem \ref{thm:proj-thm-cp-distribution:-2}, we have the following sharp projection theorem for Assouad dimension of sets.  We use $\dim_{\rm A}F$ to denote the Assouad dimension of a set $F$.  

\begin{theorem}\label{thm:proj-Assouad:-1}
Let $F \subset \R^2$ be any non-empty set. Then the following set is at most countable
\[
\{\pi\in G(2,1): \dim_{\rm A}\pi F<\min(1,\dim_{\rm A}F)\}.
\]
\end{theorem}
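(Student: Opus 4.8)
The strategy is to derive Theorem \ref{thm:proj-Assouad:-1} from the CP-distribution projection theorem (Theorem \ref{thm:proj-thm-cp-distribution:-2}) through the description of the Assouad dimension via \emph{microsets} (weak tangents). Recall that a microset of a set $G\subset\R^d$ is a Hausdorff limit of rescaled pieces $T_j(G\cap C_j)$, where $C_j$ is a dyadic cube and $T_j$ is the homothety carrying $C_j$ onto $[0,1)^d$; by the standard characterization of the Assouad dimension (a result going back to Furstenberg, with the general form due to Mackay--Tyson and to K\"aenm\"aki--Ojala--Rossi), every microset $E$ of $G$ satisfies $\dim_{\rm H}E\le\dim_{\rm A}G$, and this bound is attained by some microset. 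Two routine reductions come first. Since $\pi F$ and $\overline{\pi F}=\pi\overline F$ have the same Assouad dimension, and $\dim_{\rm A}\overline F=\dim_{\rm A}F$, we may assume $F$ is closed; and since $\dim_{\rm A}F=\sup\{\dim_{\rm A}K:K\subset F\text{ bounded}\}$ while the exceptional set of $F$ is contained in that of any $K\subset F$, it suffices to treat $F$ compact. Put $s=\dim_{\rm A}F$.

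The crux is the following fact, which I would establish via the scenery-flow / CP-chain construction of Furstenberg and Hochman--Shmerkin: \emph{there exists an ergodic CP-distribution $Q$ on $\R^2$ with $\dim Q=s$ such that, for $Q$-a.e.\ $\mu$, ${\rm supp}\,\mu$ is contained in a microset of $F$}. From $\dim_{\rm A}F=s$ one extracts, for each $m$, a dyadic cube $C_m$ and a probability measure $\nu_m$ on $[0,1)^2$ obtained by rescaling a uniform net-measure on $F\cap C_m$, for which $H(\nu_m,\D_{L_m})\ge(s-1/m)L_m$ with $L_m\to\infty$. Averaging the magnifications of $\nu_m$ over the scales $0,\dots,L_m-1$ produces empirical distributions on measure-space whose weak-$*$ limit $\widetilde Q$ is a CP-distribution; a telescoping of one-step entropies gives $\dim\widetilde Q\ge s$, while any CP-distribution carried by microsets of $F$ has dimension $\le s$ (its $Q$-a.e.\ measure is exact-dimensional of dimension $\dim Q$ and is supported in a microset, hence in a set of Hausdorff dimension $\le\dim_{\rm A}F$), so $\dim\widetilde Q=s$. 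Finally, the family of measures supported on microsets of $F$ is closed and invariant under the magnification map, so an ergodic component $Q$ of $\widetilde Q$ inherits both $\dim Q=s$ and the microset property. Turning this outline into a rigorous construction of a genuine ergodic CP-distribution while tracking the microset property is the main technical obstacle; the rest is soft.

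Granting this, fix a direction $\theta$ with $\pi=\pi_\theta$ outside the at most countable set produced by Theorem \ref{thm:proj-thm-cp-distribution:-2}, so that $\dim\pi_\theta(Q)=\min(1,\dim Q)=\min(1,s)$. By the Hochman--Shmerkin projection theory for CP-distributions, $\dim_{\rm H}\pi_\theta\mu\ge\dim\pi_\theta(Q)$ for $Q$-a.e.\ $\mu$; choose such a $\mu$, with ${\rm supp}\,\mu\subset E$ for some microset $E$ of $F$. Since $\pi_\theta$ intertwines the magnification dynamics of $\R^2$ with that of $\R$ (projection commutes with homotheties), $\pi_\theta E$ is contained in a microset of $\pi_\theta F$, whence $\dim_{\rm H}\pi_\theta E\le\dim_{\rm A}\pi_\theta F$. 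Combining,
\[
\min(1,s)=\dim\pi_\theta(Q)\le\dim_{\rm H}\pi_\theta\mu\le\dim_{\rm H}\pi_\theta({\rm supp}\,\mu)\le\dim_{\rm H}\pi_\theta E\le\dim_{\rm A}\pi_\theta F\le\min(1,\dim_{\rm A}F),
\]
so equality holds throughout and $\dim_{\rm A}\pi_\theta F=\min(1,s)$ for all but at most countably many $\pi\in G(2,1)$, which is the assertion. (Alternatively, one can avoid constructing $Q$ and instead apply Theorem \ref{thm:main-proj-upper-unif-entropy-dim} to a measure $\nu$ with upper uniform entropy dimension $s$ supported on a microset of $F$, using $\dim_{\rm P}\pi_\theta\nu\le\dim_{\rm A}({\rm supp}\,\pi_\theta\nu)\le\dim_{\rm A}\pi_\theta F$; but producing such a $\nu$ requires essentially the same input.)
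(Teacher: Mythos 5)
Your argument follows essentially the same route as the paper: pass to a microset (weak tangent) of $F$ that carries an ergodic CP-distribution $Q$ with $\dim Q = \dim_{\rm A}F$, apply Theorem \ref{thm:proj-thm-cp-distribution:-2} to control $\dim\pi Q$ off a countable exceptional set, use the Hochman--Shmerkin bound $\dim_{\rm H}\pi\mu\ge\dim\pi Q$, and transfer back via the monotonicity of Assouad dimension under passage to microsets (together with the compatibility of projections with the magnification dynamics). The only substantive difference is that you sketch a from-scratch construction of the ergodic CP-distribution carried by microsets and flag it as the main technical obstacle, whereas the paper simply invokes this as a known fact (Lemma \ref{lemma:proof-proj-Assouad:1}(2), cited to Furstenberg \cite{Furstenber2008} and Fraser's book \cite{Fraser-book}); once you grant that lemma, the two proofs coincide.
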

For more details about Assouad dimension and recent developments around the notion,    we refer to \cite{Fraser-book}.  For any subset $F \subset \R^2$,  that the exceptional set $\{\pi\in G(2,1): \dim_{\rm A}\pi F<\min(1,\dim_{\rm A}F)\}$ has zero Hausdorff dimension is due to Orponen \cite{Orponen-Assouad}. The question as to whether the set $E_{\rm A}(K)$ also has zero Packing dimension has been asked in  \cite{Orponen-Assouad, Fraser-book}. 
Theorem \ref{thm:proj-Assouad:-1} is sharp: there exists a self-similar set $F\subset \R^2$ satisfying the open set condition (thus $F$ is Alfhors-David regular) such that $\{\pi\in G(2,1): \dim_{\rm A}\pi F<\min(1,\dim_{\rm A}F)\}$  is dense in $G(2,1)$ (hence at least countable).  See Section \ref{subsection:Proj-Assouad}.

\subsubsection{Organization of the paper}
The remaining parts of the paper are organized as follows. In the next section, we introduce various notations used in the paper. In Section \ref{section:easy-proof-proj-self-similar-without-rotation}, we present the proof of Theorem \ref{thm:main:pertubate-increase:1}. 
The proof of Theorem \ref{thm:main:pertubate-increase:1} will also serve as a motivating example for the proof of Theorem \ref{thm:proj-thm-cp-distribution:-2} which follows a similar strategy but technically more complicated.
In Section \ref{section: proj-CP-distributions:1}, we prove Theorem \ref{thm:proj-thm-cp-distribution:-2}. In Section 3, we present the proofs of Theorems \ref{thm:main-proj-upper-unif-entropy-dim} and \ref{thm:proj-Assouad:-1}.

\section{Notations}\label{section:notation}
For a measure $\eta$ and real $r\neq 0$, $r\eta$ stands for the measure defined
by $r\eta(A)=\eta(A/r)$. 
Recall that $\mu*\nu$ stands for the convolution of two measures $\mu$ and $\nu$.  

For two real vectors $(a_i)_{i=1}^m,   (b_i)_{i=1}^m$,  $\dist((a_i)_i,(b_i)_i)$ denotes the Euclidean distance between them,  i.e.,  
$\dist((a_i)_i,(b_i)_i)=\left(\sum_i|a_i-b_i|^2\right)^{1/2}$. 

The collection of Borel probability measures on a metric space $X$ is denoted by $\mathcal{P}(X)$.  For $\mu,\nu\in \mathcal{P}(X)$, we use the L\'evy-Prokhorov metric to measure their distance:
$$d(\mu,\nu)=\inf \{\epsilon>0: \mu(A)\le \nu(A^{\epsilon})+\epsilon \ \textrm{ for all Borel set }A\},$$
where $A^{\epsilon}$ is the $\epsilon$-neighborhood of $A$ in $X$.   

For a measure $\mu$ and a subset $A$ with $0<\mu(A)<\infty$, we write  $\mu_A=\frac{\mu_{|A}}{\mu(A)}$, where $\mu_{|A}$ is the restriction of $\mu$ on $A$.

For integers $d\ge 1, k\ge 0$,   we use  $\mathcal{D}_k(\R^d)$ to denote the collection of dyadic cells of side-length $2^{-k}$:
\[
\mathcal{D}_k(\R^d)=\left\{ [\frac{l_1}{2^k},\frac{l_1+1}{2^k})\times \ldots \times [\frac{l_d}{2^k},\frac{l_d+1}{2^k}) : l_1,\ldots, l_d\in \Z \right\}.
\]
For $x\in \R^d$,  $\D_k(x)$ is the unique element of $\mathcal{D}_k(\R^d)$ containing $x$.

For a measure $\mu$ and  $D\in \D_k(\R^d)$, we denote 
\[
\mu^D=S_D(\mu_D),
\]
where  $S_{D}$ is the unique orientation-preserving homothety sending $D$ to $[0,1)^d$. 

For $\mu\in  \mathcal{P}(X)$ and a countable partition $\mathcal{A}$ of $X$,  the Shannon entropy of $\mu$ with respect to $\mathcal{A}$ is given by 
\[H(\mu,\mathcal{A})=-\sum_{A\in \mathcal{A}} \mu(E)\log \mu(E),
\]
where the logarithm is in base $2$ and $0\log 0 = 0$.  

The (lower) Hausdorff dimension of a measure $\mu\in \cP(\R^d)$, denoted $\dim_{\rm H} \nu$, is defined as 
\begin{equation}\label{def-low-Hausdorff-dim-1}
\dim_{\rm H} \mu=\inf\{\dimH A: A\subset \R^d \textrm{ is Borel and } \mu(A)>0\}.
\end{equation}
where $\dimH A$ denotes the Hausdorff dimension of $A$.
The (lower) Packing dimension of a measure $\mu\in \cP(\R^d)$, denoted $\dim_{\rm P} \mu$, is defined as 
\begin{equation}\label{def-low-Pack-dim-1}
\dim_{\rm P} \mu=\inf\{\dimP A: A\subset \R^d \textrm{ is Borel and }  \mu(A)>0\}.
\end{equation}
It is well known that (see e.g.  \cite{FanLauRao}) the dimensions $\dimH \mu$ and $\dimP\mu$ can be characterized alternatively as follows
\begin{equation}\label{eq:chara-hausdorff-dim-meas:1}
\dimH \mu = \ess-inf_{x\sim \nu} \underline{D}(\mu,x),  
\end{equation}
\begin{equation}\label{eq:chara-packing-dim-meas:1}
\dimP \mu = \ess-inf_{x\sim \mu} \overline{D}(\mu,x),  
\end{equation}
where $ \underline{D}(\mu,x)=\liminf_{n\to\infty}\frac{\log \mu(\D_n(x))}{-n\log 2}$ is the lower local dimension of $\mu$ at $x$ and similarly $ \overline{D}(\mu,x)$ is the upper local dimension of $\mu$ at $x$.
When $\underline{D}(\mu,x)= \overline{D}(\mu,x)$, we say that the local dimension of $\mu$ at $x$ exists and denote
it by $D(\mu,x)$.  If the local dimension of $\mu$ exists and is constant $\mu$-almost everywhere,  then
$\mu$ is called exact dimensional and the almost sure local dimension is denoted by $\dim \mu$.

For $A\subset\R^d$ and $\delta>0$, we denote by $N_{\delta}(A)$ the smallest number of balls of diameter $\delta$ needed to cover $A$.

We use the small $o$ notation: $o_c(1)$ denotes a small quantity depending on a parameter constant $c$ with $o_c(1)\to 0$ as $c\to 0$ or $c\to\infty$. Whether $c\to 0$ or $c\to\infty$ will be clear from the context.

The notation $A=B\pm C$, with $C\ge 0$,  means $B-C\le A\le B+C$.

Let $\mu\in \cP(\R^2)$ and $\pi\in G(2,1)$.  For $x\in \R$,  let $\mu_{\pi^{-1}(x)}$ denote the conditional measure of $\mu$ on the fiber $\pi^{-1}(x)$,  provided that $\mu_{\pi^{-1}(x)}$ is well defined.  For $\pi\mu$-a.e.  $x$, the conditional measure $\mu_{\pi^{-1}(x)}$ is well defined and for any Borel set $E\subset \R^2$,  we have 
\begin{equation}\label{eq:notation-conditional-meas:1}
\mu(E)=\int  \mu_{\pi^{-1}(x)}(E) d\pi\mu(x).
\end{equation}

\section{Self-similar measures}\label{section:easy-proof-proj-self-similar-without-rotation}
In this section, we prove  Theorem  \ref{thm:main:pertubate-increase:1}.  
\begin{theorem}\label{thm:main:pertubate-increase:2}
Let $J$ be an open  interval, and for each $t\in J$, let $\F_t$ be an IFS on $\R$. For a given probability vector $p=(p_i)_{i\in \Lambda}$, let $\mu_{p,t}$ be the self-similar measure associated to $\F_t$ and $p$.  Suppose that the family $\{\F_t\}_{t\in J}$ satisfies the $\beta$-transversality condition.  Then for any $t\in J$ with $\dim \mu_{p,t}<\min (1,\sd \mu_{p,t})$,  there exist $\delta>0$ and $\epsilon>0$ such that 
\[
 \dim \mu_{p,t'} \ge \dim \mu_{p,t}+\delta  \  \  \  \textrm{ for } t'\in \left(B(t,\epsilon)\cap J\right)\setminus \{t\}. 
\]
\end{theorem}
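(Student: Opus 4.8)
The plan is to reduce $\dim\mu_{p,t}$ to an entropy quantity, exploit the exact self‑similar structure together with $\beta$‑transversality, and feed everything through Hochman's inverse theorem for entropy of convolutions. First I would fix $t_0$ with $\dim\mu_{p,t_0}=\alpha<\min(1,\sd\mu_{p,t_0})$, put $\gamma_0:=\min(1,\sd\mu_{p,t_0})-\alpha>0$, and pass to a compact subinterval $J'\subset J$ with $t_0$ in its interior, chosen small enough (using continuity of $t\mapsto\sd\mu_{p,t}$) that $\min(1,\sd\mu_{p,t})\ge\alpha+\tfrac34\gamma_0$ for $t\in J'$; all the constants below — the transversality constant $C_\beta$, the bound $C$ of \eqref{eq:definition-IFS-regularity:1}, and the constants in the inverse theorem — are then uniform over $J'$. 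After a stopping‑time reduction I may assume the system is generation‑homogeneous: for each $n$ there is a family $\Lambda_n^\ast$ of finite words with $|r_{\mathbf i,t}|\asymp 2^{-n}$, the $\beta$‑transversality condition persists for $\Lambda_n^\ast$ with comparable constants, and $\mu_{p,t}=\Phi_n^{(t)}*\nu_n^{(t)}$, where $\Phi_n^{(t)}=\sum_{\mathbf i\in\Lambda_n^\ast}p_{\mathbf i}\,\delta_{f_{\mathbf i,t}(0)}$ and the support of $\nu_n^{(t)}$ has diameter $O(2^{-n})$. Since $\mu_{p,t}$ is exact dimensional this gives $\dim\mu_{p,t}=\lim_n\tfrac1n H(\mu_{p,t},\D_n)=\lim_n\tfrac1n H(\Phi_n^{(t)},\D_n)$, convolution by an $O(2^{-n})$‑diameter measure costing only $O(1)$ in the scale‑$2^{-n}$ entropy.

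The core is a perturbation estimate resting on the elementary observation that a parameter change of size $\rho:=|t'-t_0|$ displaces each point $f_{\mathbf i,t}(0)$ by only $O(\rho)$ (by \eqref{eq:definition-IFS-regularity:1}), while inside a cell of generation $K:=K(\rho)\asymp\log(1/\rho)$ — where the natural unit is $2^{-K}\asymp\rho$ — the same change is of order $1$. Consequently, at every generation $n\le K$ the configurations of $\mu_{p,t_0}$ and $\mu_{p,t'}$, and hence the entropies $\tfrac1n H(\Phi_n^{(t')},\D_n)$ and $\tfrac1n H(\Phi_n^{(t_0)},\D_n)$, agree up to $O(1/n)$; whereas at finer generations the rescaled suffix configuration inside a generation‑$K$ cell of $\mu_{p,t'}$ is an order‑$1$ transversal deformation of that of $\mu_{p,t_0}$, and $\beta$‑transversality (with $|\Delta'|\ge C_\beta d(\cdot,\cdot)^\beta$ where $|\Delta|$ is small, and $|\Delta''|\le 2C$ on $J'$ once $\epsilon\le C_\beta/(4C)$, applied to suffix pairs after stripping common prefixes) says that such a deformation pulls super‑close suffix cylinders $\gtrsim C_\beta\rho$ apart. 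Now $\dim\mu_{p,t_0}<\min(1,\sd\mu_{p,t_0})$ forces, via Hochman's theorem \cite{Hochman2014}, the $t_0$‑configuration to be concentrated at a positive density of scales; a transversal deformation of a concentrated configuration spreads it out — exactly the effect Marstrand/Peres--Schlag transversality quantifies — and pushing this through the entropy bookkeeping, with the quantitative inverse theorem, produces $\delta=\delta(\gamma_0,C_\beta,C,|\Lambda|,p)>0$ and $\epsilon>0$, depending only on the same data, such that $\dim\mu_{p,t'}\ge\alpha+\delta$ for every $t'\in(B(t_0,\epsilon)\cap J)\setminus\{t_0\}$. Equivalently, arguing by contradiction from $\dim\mu_{p,t'}\le\alpha+\delta$: the inverse theorem at $t'$ (which now has a definite dimension deficit below $\min(1,\sd\mu_{p,t'})$) yields a positive density of generations at which $\Phi_n^{(t')}$ is concentrated, the transversal‑spreading estimate at those generations produces a positive proportion of extra entropy relative to what $\dim\mu_{p,t_0}=\alpha$ allows, and this contradicts $\dim\mu_{p,t'}\le\alpha+\delta$ once $\delta$ is small.

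The main obstacle is this last synthesis — turning ``transversal deformation of a concentrated configuration'' into a quantified entropy gain, uniformly across all generations $n>K(\rho)$. In practice one must: (i) invoke Hochman's inverse theorem for convolutions on $\R$ in a quantitative, scale‑by‑scale form with constants uniform over $\{\F_t\}_{t\in J'}$, in both directions (dimension deficit $\Rightarrow$ concentration at a positive density of scales, and concentration confined to few scales $\Rightarrow$ dimension lower bound); (ii) control the cancellations in $\Delta_{\mathbf i,\mathbf j}(t)=f_{\mathbf i,t}(0)-f_{\mathbf j,t}(0)$ coming from long common prefixes by descending to the rescaled suffix pairs and applying Definition \ref{definition:transversality:1} there — this is where the precise form of $\beta$‑transversality, with $d(x,y)^\beta$ on both sides, and the $C^2$‑regularity \eqref{eq:definition-IFS-regularity:1} are essential, and it is what makes the generation‑homogeneous reduction (especially when the ratios $r_{i,t}$ themselves vary with $t$) delicate; and (iii) ensure the spreading gain survives averaging over the generation‑$K(\rho)$ cells so that it is genuinely of order $1$ rather than $o(1)$. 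With these in hand the theorem follows, and the same skeleton — CP‑distributions replacing the exact convolution identity, ergodic averages replacing the sums over $\Lambda_n^\ast$ — is what the paper announces will drive Theorem \ref{thm:proj-thm-cp-distribution:-2}.
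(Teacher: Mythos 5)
Your overall strategy---reduce $\dim\mu_{p,t}$ to the entropy of the discrete configuration, extract concentration of the $t_0$-configuration from the dimension deficit via Hochman's inverse theorem, use $\beta$-transversality to spread the configuration under a parameter perturbation, and convert the spread back into an entropy/dimension gain via the inverse theorem again---is the same skeleton the paper uses. But there is a genuine gap at the foundation: the claimed identity $\mu_{p,t}=\Phi_n^{(t)}*\nu_n^{(t)}$ does not hold for a non-homogeneous IFS. From $\mu_{p,t}=\sum_{\mathbf i\in\Lambda_n^\ast}p_{\mathbf i}\,f_{\mathbf i,t}\mu_{p,t}$, each piece $f_{\mathbf i,t}\mu_{p,t}$ is a translated copy of $\mu_{p,t}$ scaled by $r_{\mathbf i,t}$, and these scales differ from word to word; this is a convolution only when all the $r_{\mathbf i,t}$ coincide. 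Your ``generation-homogeneous reduction'' enforces $|r_{\mathbf i,t}|\asymp 2^{-n}$, not equality, and comparability does not produce a convolution. Since both uses of the inverse theorem in your outline---(a) dimension deficit $\Rightarrow$ concentration of the $t_0$-configuration, and (b) post-perturbation spread $\Rightarrow$ entropy increase at $t'$---take a genuine convolution $\eta*\nu$ as input, the argument as written does not start for a general IFS. You do flag something as ``delicate'' in point (ii), but what you flag is the $t$-dependence of the ratios; the more basic obstruction, that the ratios vary with $\mathbf i$ at a single $t$, is not addressed.

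The paper resolves this by conditioning, inside each cell $Q\in\D_n$, on the \emph{exact} contraction ratio: it sets $A_{Q,r}=\{I\in A_Q: r_{I,t_0}=r\}$ and observes that $\nu_{Q,r}^{t_0}=\eta_{Q,r}^{t_0}*\,r\mu_{t_0}$, with $\eta_{Q,r}^{t_0}$ the discrete point mass on $\{f_{I,t_0}(0): I\in A_{Q,r}\}$, is then a genuine convolution. The key quantitative fact is that the number of distinct ratios occurring in $\Lambda_n$ is only polynomial in $n$, so this extra conditioning costs only $o(n)$ entropy and does not disturb the bookkeeping; a parallel decomposition is carried out at $t'$. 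A second, smaller imprecision: your ``pulls super-close suffix cylinders $\gtrsim C_\beta\rho$ apart'' silently sets $d(x,y)^\beta\asymp 1$, which you justify by ``stripping common prefixes,'' but Definition \ref{definition:transversality:1} is stated for the original pair, and stripping a prefix $I$ rescales $\Delta$ by $r_{I,t}$. The usable separation after perturbing by $\rho$ is really $\gtrsim C_\beta\,\rho\,d(x_I,y_J)^\beta$, which for $I,J\in\Lambda_n$ can be as small as $\rho\,2^{-c n\beta}$; this is exactly why the paper introduces the auxiliary exponent $m=\lfloor\beta+3\rfloor$ and works across the three scales $n$, $nm$ (the perturbation size), and $3nm$ (the concentration scale) rather than at a single scale $K(\rho)$.
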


The main ingredient in the proof of Theorem \ref{thm:main:pertubate-increase:2} is Hochman's inverse theorem for entropy. Let us recall this result. 
\begin{theorem}[Theorem 4.11 of \cite{Hochman2014}]\label{theorem:Hochman-inverse-thm}
For  every $1\le k_1,k_2 \in \N, \epsilon>0$,   there exists $\delta>0$ such that the following holds for all $n$ large enough. Let $\mu,\nu\in \cP([0,1])$. Suppose that 
\[H(\mu*\nu,\D_{n})\le H(\mu,\D_{n})+n\delta.\]
Then there exist $I,J\subset \{1,\ldots,n\}$ such that $|I\cup J|\ge (1-\epsilon)n$ and 
\begin{eqnarray*}
\mu\left(x: H(\mu^{\D_{i}(x)},\D_{k_1})\ge (1-\epsilon)k_1 \right) & > & 1-\epsilon \  \textrm{ for } i\in I  \\
\nu\left(x: \nu^{\D_{j}(x)}(Q)\ge (1-\epsilon) \textrm{ for some } Q\in \D_{k_2} \right) & > & 1-\epsilon \  \textrm{ for } j\in J. 
\end{eqnarray*}
\end{theorem}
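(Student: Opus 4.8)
\textbf{Proof proposal for Theorem \ref{thm:main:pertubate-increase:2}.}

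The plan is to argue by contradiction, combining Hochman's inverse theorem (Theorem \ref{theorem:Hochman-inverse-thm}) with the $\beta$-transversality condition to show that, once the dimension of $\mu_{p,t}$ strictly drops below $\min(1,\sd\mu_{p,t})$ at a point $t$, it must jump up by a definite amount in a punctured neighbourhood of $t$. Fix $t_0\in J$ with $\alpha:=\dim\mu_{p,t_0}<\min(1,\sd\mu_{p,t_0})$ (self-similar measures on $\R$ are exact dimensional, so $\dim\mu_{p,t_0}$ is well defined). Suppose for contradiction that there is a sequence $t_m\to t_0$, $t_m\ne t_0$, with $\dim\mu_{p,t_m}\le \alpha+\tfrac1m$. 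The self-similar measure $\mu_{p,t}$ can be realized as the image under $f_{\cdot,t}(0)\colon\Lambda^\N\to\R$ of the Bernoulli measure $p^{\N}$; writing $\mu^{(n)}_{p,t}$ for the discretized measure $\sum_{\bi\in\Lambda^n} p_{\bi}\,\delta_{f_{\bi,t}(0)}$ and $\lambda_{\bi,t}$ for the linear part $r_{i_1,t}\cdots r_{i_n,t}$, one has the convolution-type decomposition $\mu_{p,t} = \sum_{\bi\in\Lambda^n} p_{\bi}\, (f_{\bi,t}(0) + \lambda_{\bi,t}\cdot\mu_{p,t})$, and at the level of entropy at scale $2^{-N}$ (for $N$ large compared to the ``word length'' $n$), $H(\mu_{p,t},\D_N)$ is, up to $o(N)$, the entropy of the superposition of the rescaled copies $\lambda_{\bi,t}\mu_{p,t}$ placed at the atoms $f_{\bi,t}(0)$.

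The core of the argument is the following dichotomy extracted from Hochman's machinery. By the entropy formula for self-similar measures (and the fact that $\alpha<\sd\mu_{p,t_0}$), the measure $\mu_{p,t_0}$ exhibits an \emph{entropy drop at exponentially many scales}: there is $\delta_0>0$ so that, for arbitrarily large $n$, the $n$-fold convolution structure loses at least $\delta_0 n$ bits of entropy relative to what the similarity dimension predicts. Applying Theorem \ref{theorem:Hochman-inverse-thm} with the roles of $\mu,\nu$ taken by the two factors in the self-convolution (one copy being the ``branching'' measure $\sum p_{\bi}\delta_{f_{\bi,t_0}(0)}$ rescaled, the other the rescaled copy of $\mu_{p,t_0}$ itself), we deduce that for a $(1-\epsilon)$-density set of scales $i$, the measure $\mu_{p,t_0}$ is, at scale $i$, concentrated on a single dyadic cell of a finer scale — i.e. the branching measure is ``atomic at resolution $\epsilon$'' on many scales. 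Quantitatively this forces, for many pairs of distinct words $\bi\ne\bj$ of comparable length, that $|f_{\bi,t_0}(0)-f_{\bj,t_0}(0)|$ is much smaller than $\max(|\lambda_{\bi,t_0}|,|\lambda_{\bj,t_0}|)$, which is comparable to $d(\bi\mathbf{a},\bj\mathbf{a})^{\beta'}$ for any fixed tail $\mathbf{a}$ and suitable exponent; in particular $|\Delta_{x,y}(t_0)|\le C_\beta d(x,y)^\beta$ for many $x,y$ (here one uses $\sd\mu_{p,t_0}\le 1$ is replaced by $\alpha<1$ to rule out the alternative where the rescaled $\mu_{p,t_0}$ itself already carries full entropy; when $\sd\mu_{p,t_0}>1$ one instead works with a truncated probability vector to reduce to the $\le 1$ regime, standard in this circle of ideas).

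Now transversality enters. For each such near-coincident pair $x,y$ we get from Definition \ref{definition:transversality:1} that $|\Delta_{x,y}'(t_0)|\ge C_\beta d(x,y)^\beta$; combined with the $C^2$-bound \eqref{eq:definition-IFS-regularity:1} on $\Delta''_{x,y}$, this means that \emph{perturbing $t_0$ by a tiny amount $\varepsilon$ separates these pairs} — specifically $|\Delta_{x,y}(t)|\gtrsim |t-t_0|\,d(x,y)^\beta$ on a controlled interval. The upshot is a Galilean-transform / ``de-correlation'' estimate: at $t$ close to but distinct from $t_0$, the concentrated-at-fine-scale phenomenon that was present at $t_0$ is destroyed on a positive-density set of scales, which by (the converse direction of) Hochman's entropy growth under convolution yields $H(\mu_{p,t},\D_N)\ge H(\mu_{p,t_0},\D_N)+\delta N - o(N)$ with $\delta=\delta(\delta_0,\epsilon)>0$. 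Dividing by $N$ and using exact dimensionality gives $\dim\mu_{p,t}\ge\alpha+\delta$ for all $t\in (B(t_0,\epsilon)\cap J)\setminus\{t_0\}$, contradicting the assumed sequence $t_m$ once $\tfrac1m<\delta$. This contradiction proves the theorem.

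I expect the main obstacle to be the middle paragraph: cleanly extracting, from the single-scale inverse theorem, a \emph{uniform over scales} statement about near-coincidences of cylinder endpoints in the symbolic space, and keeping track of how the word-length $n$, the fine scale $k_1,k_2$, and the global scale $N$ interlock — in particular making sure the ``many near-coincident pairs'' survive the averaging so that transversality can be applied to a genuinely non-negligible portion of the measure. The bookkeeping relating $d(x,y)^\beta$ to the linear parts $\lambda_{\bi,t}$ (where $\beta$ need not be $1$) and the reduction handling $\sd\mu_{p,t_0}>1$ are technical but routine; the conceptual crux is the two-way passage between entropy drop at scale $N$ and symbolic near-coincidence, which is exactly where Theorem \ref{theorem:Hochman-inverse-thm} and transversality must be dovetailed.
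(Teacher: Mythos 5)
The statement you were assigned is Theorem~\ref{theorem:Hochman-inverse-thm}, which is Theorem 4.11 of Hochman's paper \cite{Hochman2014}; the present paper \emph{cites} it without proof, so there is no argument in the paper for you to reproduce. Your proposal is explicitly labeled as a proof of Theorem~\ref{thm:main:pertubate-increase:2} instead, which is a different result that invokes Hochman's inverse theorem as a black box — so on the literal task there is nothing to compare.

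Judged as a sketch of Theorem~\ref{thm:main:pertubate-increase:2}, your proposal follows essentially the same route as the paper's Section~\ref{section:easy-proof-proj-self-similar-without-rotation}: the gap $\sd\mu_{t_0}-\dim\mu_{t_0}>0$ produces entropy ``waste'' across the level-$n$ cylinder decomposition; Hochman's inverse theorem together with $\dim\mu_{t_0}<1$ (to rule out the full-entropy branch, which you correctly note) forces the branching atom measure to have $o(n)$ entropy at the fine scale; transversality separates near-coincident cylinder endpoints when $t\ne t_0$ with $|t-t_0|\approx 2^{-nm}$; and the resulting entropy of the branching measure at $t$ feeds back through the inverse theorem to lift $\dim\mu_t$. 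The paper implements this with $m=\lfloor\beta+3\rfloor$, grouping cylinders first by cell $Q\in\D_n$ and then by scaling ratio $r$ so that $\nu_{Q,r}^{t_0}=\eta_{Q,r}^{t_0}*r\mu_{t_0}$ is an honest convolution to which Theorem~\ref{theorem:Hochman-inverse-thm} applies (Lemmas~\ref{lemma:pf-thm-pertubate-increase-2:1}--\ref{lemma:pf-thm-pertubate-increase-2:3}), with the transversality mechanism isolated in Lemma~\ref{lemma:pf-thm-pertubate-increase-2:4}. Two concrete gaps in your sketch. First, your closing step ``dividing by $N$ and using exact dimensionality'' does not finish: you only have an entropy lower bound at a \emph{single} scale $N\approx 3nm$ linked to $|t-t_0|$, whereas exact dimensionality is a limit over all scales for fixed $t$. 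The paper closes this with the Peres--Solomyak inequality $\dim\eta\ge\frac1kH(\eta,\D_k)-\frac{C}{k}$, valid for every self-similar measure $\eta$ and every $k$; without some such single-scale-to-dimension bridge the argument does not conclude. Second, the bookkeeping that converts ``atomic at a density of scales'' into a usable entropy bound on the discrete branching measure, and then into a dimension increment, requires the scaling-ratio grouping by $r$ (so that the components are genuine convolutions with a scaled copy of the self-similar measure), which your sketch omits; this is precisely where the interlocking of $n$, $l$, and $3nm$ that you flag as the main obstacle has to be resolved.
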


We shall use the following standard facts concerning the entropy function $H(\mu,\D_k)$. 
\begin{lemma}\label{lemma:almost-continuity-entropy:1}
Let $\eta\in \cP(X)$, where $X$ is a metric space.   Suppose that  $\mathcal{A}_1,\mathcal{A}_2$ are partitions of $X$,  and $k\in \N$.
\begin{itemize}
\item[(1)] If each element of $\mathcal{A}_1$ intersects at most $k$ elements of $\mathcal{A}_2$ and vice versa, then 
$\left| H(\eta,\mathcal{A}_1)-H(\eta,\mathcal{A}_2) \right|=O(\log k)$.
\item[(2)] If  $X=\R^d$ and  $h(x)=rx+s$ with $r\in \R, s\in \R^d, 1/2\le |r|\le 2$,  then $\left| H(\eta,\D_k)-H(h\eta,\D_k) \right|=O_d(1)$.
\item[(3)] If $\mathcal{A}_2$ refines $\mathcal{A}_1$,  then 
\[H(\eta,\mathcal{A}_2) = H(\eta,\mathcal{A}_1)+\sum_{a_1\in \mathcal{A}_1}\eta(a_1)\sum_{a_2\in \mathcal{A}_2}\frac{\eta(a_1\cap a_2)}{\eta(a_1)}\log  \frac{\eta(a_1)}{\eta(a_1\cap a_2)}.  \] 
\end{itemize}
\end{lemma}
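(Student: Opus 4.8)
The three assertions are standard properties of Shannon entropy, and the plan is to establish them in the order (3), then (1), then (2), each derived from the previous one. I would begin with (3), which is just the chain rule for entropy applied to a refinement. Since $\mathcal{A}_2$ refines $\mathcal{A}_1$, the common refinement $\mathcal{A}_1\vee\mathcal{A}_2$ coincides with $\mathcal{A}_2$ up to $\eta$-null sets, so writing $H(\eta,\mathcal{A}_2)=H(\eta,\mathcal{A}_1)+H(\eta,\mathcal{A}_2\mid\mathcal{A}_1)$ with conditional entropy $H(\eta,\mathcal{A}_2\mid\mathcal{A}_1)=\sum_{a_1\in\mathcal{A}_1}\eta(a_1)\sum_{a_2\in\mathcal{A}_2}\frac{\eta(a_1\cap a_2)}{\eta(a_1)}\log\frac{\eta(a_1)}{\eta(a_1\cap a_2)}$ yields exactly the displayed identity. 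Alternatively it can be checked directly: group the summands of $-\sum_{a_2\in\mathcal{A}_2}\eta(a_2)\log\eta(a_2)$ according to the ($\eta$-a.e.\ unique) element $a_1\in\mathcal{A}_1$ containing $a_2$, and use $\log\eta(a_2)=\log\eta(a_1)-\log\frac{\eta(a_1)}{\eta(a_2)}$ together with $\sum_{a_2\subset a_1}\eta(a_2)=\eta(a_1)$.

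For (1), I would apply (3) to the common refinement $\mathcal{A}:=\mathcal{A}_1\vee\mathcal{A}_2$, which refines both $\mathcal{A}_1$ and $\mathcal{A}_2$, obtaining $H(\eta,\mathcal{A})-H(\eta,\mathcal{A}_i)=H(\eta,\mathcal{A}\mid\mathcal{A}_i)$ for $i=1,2$. For a fixed $a_1\in\mathcal{A}_1$, the elements of $\mathcal{A}$ contained in $a_1$ are the sets $a_1\cap a_2$ with $a_2$ ranging over the at most $k$ elements of $\mathcal{A}_2$ meeting $a_1$; hence the corresponding conditional probability vector has at most $k$ nonzero entries, so $H(\eta_{a_1},\mathcal{A})\le\log k$, and therefore $0\le H(\eta,\mathcal{A}\mid\mathcal{A}_1)\le\log k$. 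Using the symmetric hypothesis that each element of $\mathcal{A}_2$ meets at most $k$ elements of $\mathcal{A}_1$, one gets $0\le H(\eta,\mathcal{A}\mid\mathcal{A}_2)\le\log k$ as well. Subtracting the two identities gives $|H(\eta,\mathcal{A}_1)-H(\eta,\mathcal{A}_2)|=|H(\eta,\mathcal{A}\mid\mathcal{A}_1)-H(\eta,\mathcal{A}\mid\mathcal{A}_2)|\le\log k=O(\log k)$. Here one uses that at least one of the two entropies is finite, so the subtraction is meaningful; in every application in this paper $\eta$ is compactly supported and all entropies in sight are finite.

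Finally, (2) reduces to (1). Since $h\eta(D)=\eta(h^{-1}D)$ for Borel $D$, we have $H(h\eta,\D_k)=H(\eta,h^{-1}\D_k)$, where $h^{-1}\D_k=\{h^{-1}(D):D\in\D_k\}$ is a partition of $\R^d$ into cubes of side-length $2^{-k}/|r|\in[2^{-k-1},2^{-k+1}]$. An elementary coordinate-wise counting argument shows that any cube of side-length in $[2^{-k-1},2^{-k+1}]$ meets at most $3^d$ cells of $\D_k$, and conversely each cell of $\D_k$ meets at most $3^d$ such cubes. Applying (1) with $k$ replaced by the dimensional constant $3^d$ gives $|H(\eta,h^{-1}\D_k)-H(\eta,\D_k)|=O(\log 3^d)=O_d(1)$, which together with the identity above is the claim.

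There is no genuine obstacle here; the only points requiring a little care are the bookkeeping in the chain-rule identity of (3), the harmless finiteness-of-entropy requirement used to subtract entropies in (1), and the elementary geometric overlap count in (2).
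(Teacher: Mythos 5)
Your proof is correct. The paper does not supply a proof of this lemma at all; it is introduced with the phrase ``We shall use the following standard facts concerning the entropy function,'' so there is no argument in the source to compare against. Your derivation --- establishing (3) as the chain rule, deducing (1) by applying (3) to the common refinement $\mathcal{A}_1\vee\mathcal{A}_2$ and bounding both conditional entropies by $\log k$, and reducing (2) to (1) via a $3^d$ overlap count between $\D_k$ and $h^{-1}\D_k$ --- is the standard route and is the one the author evidently has in mind. The finiteness caveat you flag in (1) is the right thing to notice and is indeed harmless here, since every measure to which the lemma is applied in the paper is compactly supported.
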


\begin{lemma}\label{lemma:convexity-entropy:1}
If $\eta=\sum_{i}q_i\eta_i,  \eta_i\in \cP(\R^d),\sum_iq_i=1, q_i\ge 0$, then for any partition $\mathcal{A}$ of $\R^d$,  we have
\begin{itemize}
\item[(1)]
$
H(\eta,\mathcal{A})\ge \sum_i q_i H(\eta_i,\mathcal{A});
$
\item[(2)]
$
H(\eta,\mathcal{A})\le \sum_{i}q_i\log \frac{1}{q_i}+ \sum_i q_i H(\eta_i,\mathcal{A}).
$
\end{itemize}
\end{lemma}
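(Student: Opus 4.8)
\textbf{Proof proposal for Lemma \ref{lemma:convexity-entropy:1}.}

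The plan is to prove the two inequalities directly from the definition of Shannon entropy, exploiting the concavity and subadditivity properties of the function $\phi(x) = -x\log x$ on $[0,1]$. Write $\mathcal{A} = \{A_m\}_m$ and set $a_m = \eta(A_m)$, $a_{i,m} = \eta_i(A_m)$, so that $a_m = \sum_i q_i a_{i,m}$ and $H(\eta,\mathcal{A}) = \sum_m \phi(a_m)$ while $\sum_i q_i H(\eta_i,\mathcal{A}) = \sum_m \sum_i q_i \phi(a_{i,m})$.

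For part (1), I would fix a cell $A_m$ and observe that, since $\phi$ is concave on $[0,1]$ and $(q_i)_i$ is a probability vector, Jensen's inequality gives $\phi(a_m) = \phi\bigl(\sum_i q_i a_{i,m}\bigr) \ge \sum_i q_i \phi(a_{i,m})$. Summing over all $m \in \mathcal{A}$ yields $H(\eta,\mathcal{A}) \ge \sum_i q_i H(\eta_i,\mathcal{A})$, which is exactly the claimed lower bound.

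For part (2), I would derive the upper bound by introducing the refined picture on the product index set: consider the ``joint'' weights $q_i a_{i,m}$, which form a probability vector over pairs $(i,m)$ since $\sum_{i,m} q_i a_{i,m} = 1$. The entropy of this joint distribution is $H_{\mathrm{joint}} = \sum_{i,m} \phi(q_i a_{i,m})$. On one hand, grouping the pairs by their second coordinate and using subadditivity of $\phi$ (namely $\phi(x+y) \le \phi(x) + \phi(y)$ for $x,y \ge 0$ with $x+y \le 1$, applied inductively) gives $H(\eta,\mathcal{A}) = \sum_m \phi\bigl(\sum_i q_i a_{i,m}\bigr) \le \sum_{i,m}\phi(q_i a_{i,m}) = H_{\mathrm{joint}}$. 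On the other hand, the standard chain rule (equivalently, the identity $\phi(q_i a_{i,m}) = q_i \phi(a_{i,m}) + a_{i,m}\phi(q_i) \cdot (\text{correct bookkeeping})$, i.e. $-q_i a_{i,m}\log(q_i a_{i,m}) = -q_i a_{i,m}\log q_i - q_i a_{i,m}\log a_{i,m}$) gives, after summing over $m$ (using $\sum_m a_{i,m} = 1$) and then over $i$,
\[
H_{\mathrm{joint}} = \sum_i q_i \log\frac{1}{q_i} + \sum_i q_i H(\eta_i,\mathcal{A}).
\]
Combining the two displays gives the asserted inequality (2). Alternatively, part (2) follows from the well-known fact that entropy is a concave functional whose modulus of concavity is controlled by the entropy of the mixing weights; I would present whichever derivation is cleaner, but the direct computation above is self-contained.

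I do not anticipate a genuine obstacle here: both statements are elementary and standard. The only point requiring a little care is the countable setting --- $\mathcal{A}$ may be infinite and some of the entropies may be $+\infty$ --- so I would note that all terms $\phi(\cdot)$ are nonnegative, all sums are thus well-defined in $[0,+\infty]$, and the inequalities hold in the extended sense (in particular (1) is vacuous and (2) is automatic when the right-hand side is infinite). With that remark in place the termwise manipulations above are fully justified by Tonelli's theorem for nonnegative series.
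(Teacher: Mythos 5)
Your proof is correct and complete: the concavity/Jensen argument handles (1), and the decomposition through the joint index set $(i,m)$, together with subadditivity of $\phi(x)=-x\log x$ and the identity $\phi(q_i a_{i,m}) = q_i\phi(a_{i,m}) + a_{i,m}\phi(q_i)$, gives (2). Your remark about nonnegativity of all summands and Tonelli is the right way to dispose of the countable/infinite-entropy case. The paper itself states this lemma as a standard fact and gives no proof, so there is no alternative argument to compare against; the approach you take is the canonical one.
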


Let us recall some standard notations.   Let $\Lambda^*=\cup_{k\ge 1}\Lambda^n$.   For $I=i_1\ldots i_k\in \Lambda^*$,  denote $p_I=p_{i_1}\cdot p_{i_2}\cdot  \ldots \cdot p_{i_k}$,  $f_{I,t}(x)=f_{i_1,t}\circ \ldots \circ f_{i_k,t}(x)=r_{I,t}x+s_{I,t}.$ The natural coding map $\pi_t$ associated to the IFS $\F_t$ is the map from $\Lambda^\N$ to the attractor of $\F_t$ defined by 
\[
\pi_t(x)=f_{x,t}(0)=\lim_{n\to \infty} f_{x_1^n,t}(0).
\]

\begin{proof}[Proof of Theorem  \ref{thm:main:pertubate-increase:2}]
We fix a probability vector $p=(p_i)_{i\in \Lambda}$ and in the following simply write $\mu_t$ for $\mu_{p,t}$.  We assume that $p_i>0$ for each $i\in \Lambda$.  

Let us fix $t_0\in J$ and suppose that 
\[
\dim \mu_{t_0} <\min (1,  \sd \mu_{t_0}).
\]
Since self-similar measures are  exact dimensional (see \cite{FengHu}),   we have 
$\lim_{n\to \infty} \frac{1}{n}H(\mu_{t_0},\D_n)=\dim \mu_{t_0}.$  Hence  for any $\epsilon>0$,  there exist $n_0\in \N$ such that
\begin{equation}\label{eq:pf:thm-pertubate-increase-2:1}
\left| \frac{1}{n}H(\mu_{t_0},\D_n)-\dim \mu_{t_0} \right| < \epsilon \ \textrm{ for } n\ge n_0. 
\end{equation}
By our assumptions, there exists $\beta\ge 0$ such that the IFSs $\{\F_t\}_{t\in J}$ satisfy the $\beta$-transversality condition (Definition \ref{definition:transversality:1}).    Let $m=\lfloor \beta+3 \rfloor$.  Here $\lfloor s \rfloor$ stands for the integer part of a real number $s$.

For $k\in \N$,   let $\Lambda_k=\{ I\in \Lambda^*: |r_{I,t_0}| \le 2^{-k}<|r_{I^-,t_0}|\}$,   where $I^-=i_1\ldots i_{l-1}$ if $I=i_1\ldots i_{l}$.  
Then $\{[I]: I\in \Lambda_k\}$ forms a partition of $\Lambda^\N$.   
Note that we have \[\lim_{k\to\infty}\frac{1}{k}\sum_{I\in \Lambda_k} p_I\log \frac{1}{p_I}= \sd \mu_{t_0}.\] Thus there exist $n_1\in \N$ such that 
\begin{equation}\label{eq:pf:thm-pertubate-increase-2:2}
\left| \frac{1}{k}\sum_{I\in \Lambda_k} p_I\log \frac{1}{p_I} - \sd \mu_{t_0} \right| < \epsilon \ \textrm{ for } k\ge n_1. 
\end{equation}
Note that every self-similar measure has uniform entropy dimension which is equal to its dimension (\cite[Proposition 5.2]{Hochman2014}).  Thus there exists large $l\ge 1$ and  $n_2\in \N$ such that for $k\ge n_2$,
\begin{equation}\label{eq:pf:thm-pertubate-increase-2:2.1}
\mu_{t_0} \left( x:\left| \left\{ 1\le j\le k: \left|H(\mu_{t_0}^{\D_j(x)},\D_l)-l\dim\mu_{t_0}\right| \le l \epsilon \right\} \right| \ge k(1-\epsilon) \right) \ge 1-\epsilon.
\end{equation}

Let $\epsilon_0=2^{-(n_0+n_1+n_2) m}$.  
In the following,  we are going to show that there exists $\delta>0$ such that 
\begin{equation}\label{eq:pf:thm-pertubate-increase-2:3}
\dim \mu_{t} \ge \dim \mu_{t_0} +\delta \ \textrm{ for } t\in (B(t_0,\epsilon_0)\cap J)\setminus \{t_0\}.
\end{equation}
Let us fix any $t\in (B(t_0,\epsilon_0)\cap J)\setminus \{t_0\}$.   We also fix $n\in \N$ such that 
\[|t-t_0| = 2^{-nm\pm m}.\]
Note that,  necessarily,  we have \[n\ge n_0+n_1+n_2-1\ge \max(n_0,n_2,n_2).\]
Let us denote the attractor of $\F_{t_0}$ by $K$.    We fix any  $x_0\in K$.
For $Q\in \D_n(\R)$, let 
\[
A_Q=\left\{ I\in \Lambda_n: f_{I,t_0}(x_0)\in Q \right\}.
\]
For $Q\in \D_n(\R)$ with $A_Q\neq \emptyset$, let 
\[
a_Q=\sum_{I\in A_Q}p_I,   \  \  \nu_Q^{t_0}=\frac{1}{a_Q}\sum_{I\in A_Q}f_{I,t_0}\mu_{t_0}\  \textrm{ and }  \nu_Q^{t}=\frac{1}{a_Q}\sum_{I\in A_Q}f_{I,t}\mu_{t}.
\]
Note that for  $I\in \Lambda_n$,  $f_{I,t_0}$ has contraction ratio $\le 2^{-n}$,  thus  $f_{I,t_0}(K)$ has diameter at most $2^{-n}$ times the diameter of $K$.   It follows that $f_{I,t_0}(K)$ intersects at most $O_{K}(1)$ elements of $\D_{n}(\R)$.
By Lemma \ref{lemma:almost-continuity-entropy:1}  (1),  we have
\begin{equation}\label{eq:pf:thm-pertubate-increase-2:3.1}
\left|H(\mu_{t_0},\D_n)-\sum_{Q\in \D_n}a_Q\log \frac{1}{a_Q}\right| \le O_K(1).
\end{equation}
By \eqref{eq:pf:thm-pertubate-increase-2:1} and \eqref{eq:pf:thm-pertubate-increase-2:3.1},  we have
\begin{equation}\label{eq:pf:thm-pertubate-increase-2:4}
\frac{1}{n}\sum_{Q\in \D_n}a_Q\log \frac{1}{a_Q} =\dim \mu_{t_0} \pm \epsilon\pm \frac{O_K(1)}{n}.
\end{equation}
Observe that by Lemma \ref{lemma:almost-continuity-entropy:1} (3),   we can write
\[
\sum_{I\in \Lambda_n}p_I\log \frac{1}{p_I} =\sum_{Q\in \D_n}a_Q\log \frac{1}{a_Q} +\sum_{Q\in \D_n}a_Q\sum_{I\in A_Q}\frac{p_I}{a_Q}\log \frac{a_Q}{p_I}.
\]
In view of \eqref{eq:pf:thm-pertubate-increase-2:2} and \eqref{eq:pf:thm-pertubate-increase-2:4}, it follows  that 
\[
\frac{1}{n}\sum_{Q\in \D_n}a_Q\sum_{I\in A_Q}\frac{p_I}{a_Q}\log \frac{a_Q}{p_I}\ge \sd \mu_{t_0}-\dim\mu_{t_0}-o_{\epsilon,n}(1).
\]
Recall that we have assumed $\dim\mu_{t_0}<\min(1,\sd \mu_{t_0})$.

Note that for $Q\in \D_n$,  we have $\sum_{I\in A_Q}\frac{p_I}{a_Q}\log \frac{a_Q}{p_I}\le \log |\Lambda_n|\le Cn$, where $C$ is some constant depending only on $\F_{t_0}$ and $\Lambda$.   Since $\sum_{a\in \D_n}a_Q=1$,  by Markov's inequality there exist $\delta_1>0$,  depending only on $ \F_{t_0},  \sd \mu_{t_0}-\dim\mu_{t_0}$ and $\Lambda$,  and  $B_1\subset \D_n$ such that $\sum_{Q\in B_1}a_Q\ge \delta_1$ and for each $Q\in B_1$,
\begin{equation}\label{eq:pf:thm-pertubate-increase-2:6}
\frac{1}{n}\sum_{I\in A_Q}\frac{p_I}{a_Q}\log \frac{a_Q}{p_I} \ge \delta_1.
\end{equation}

We now estimate $H(\mu_{t_0},\D_{3nm})$ and $H(\mu_{t},\D_{3nm})$.  We shall use the following formulas which express $H(\mu_{t_0},\D_{3nm})$ (resp. $H(\mu_{t},\D_{3nm})$) in terms of $H(\nu_Q^{t_0},  \D_{3nm})$ (resp.  $H(\nu_Q^{t},  \D_{3nm})$),   $Q\in \D_n$.
\begin{lemma}\label{lemma:pf-thm-pertubate-increase-2:1}
We have
\begin{equation}\label{eq:pf:lemma:pf-thm-pertubate-increase-2:1:1}
H(\mu_{t_0},\D_{3nm}) = \sum_{Q\in \D_n}a_Q\log \frac{1}{a_Q}  +  \sum_{Q\in \D_n}a_Q H(\nu_Q^{t_0},  \D_{3nm}) +O_K(1).  
\end{equation}
and
\begin{equation}\label{eq:pf:lemma:pf-thm-pertubate-increase-2:1:2}
H(\mu_{t},\D_{3nm}) = \sum_{Q\in \D_n}a_Q\log \frac{1}{a_Q}  +  \sum_{Q\in \D_n}a_Q H(\nu_Q^{t},  \D_{3nm}) +O_K(1). 
\end{equation}
\end{lemma}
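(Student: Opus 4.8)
The plan is to prove both identities \eqref{eq:pf:lemma:pf-thm-pertubate-increase-2:1:1} and \eqref{eq:pf:lemma:pf-thm-pertubate-increase-2:1:2} by the same argument, exploiting that $\mu_{t_0}$ (resp. $\mu_t$) decomposes as the finite convex combination $\mu_{t_0}=\sum_{Q\in\D_n}a_Q\nu_Q^{t_0}$ (resp. $\mu_t=\sum_{Q}a_Q\nu_Q^{t}$), which follows directly from the self-similarity relation $\mu_{t_0}=\sum_{I\in\Lambda_n}p_I f_{I,t_0}\mu_{t_0}$, the fact that $\{[I]:I\in\Lambda_n\}$ partitions $\Lambda^\N$, and the definitions of $a_Q$ and $\nu_Q^{t_0}$. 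So I would first record this decomposition explicitly.

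The core of the argument is then a two-sided application of Lemma \ref{lemma:convexity-entropy:1} to this convex combination with the partition $\mathcal{A}=\D_{3nm}$. The lower bound in Lemma \ref{lemma:convexity-entropy:1}(1) gives $H(\mu_{t_0},\D_{3nm})\ge\sum_Q a_Q H(\nu_Q^{t_0},\D_{3nm})$, and the upper bound in Lemma \ref{lemma:convexity-entropy:1}(2) gives $H(\mu_{t_0},\D_{3nm})\le\sum_Q a_Q\log\frac{1}{a_Q}+\sum_Q a_Q H(\nu_Q^{t_0},\D_{3nm})$. The remaining issue is to show that the upper bound is in fact essentially achieved — that the two bounds differ only by $O_K(1)$ rather than by the full $\sum_Q a_Q\log\frac1{a_Q}$ term (which is of order $n$ and hence far too large to absorb). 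The mechanism for this is a localization/disjointness observation: each measure $\nu_Q^{t_0}$ is supported, up to an $O_K(1)$-controlled overlap, on $Q$ together with a bounded number of neighboring dyadic cells of $\D_n(\R)$, because each $f_{I,t_0}(K)$ with $I\in\Lambda_n$ has diameter at most $\mathrm{diam}(K)\cdot 2^{-n}$ and so meets only $O_K(1)$ cells of $\D_n$. Since $\D_{3nm}$ refines $\D_n$, I would use Lemma \ref{lemma:almost-continuity-entropy:1}(3) to write $H(\mu_{t_0},\D_{3nm})=H(\mu_{t_0},\D_n)+\sum_{a_1\in\D_n}\mu_{t_0}(a_1)H\big((\mu_{t_0})_{a_1},\D_{3nm}\big)$, and then compare $H(\mu_{t_0},\D_n)$ with $\sum_Q a_Q\log\frac1{a_Q}$ (this is exactly \eqref{eq:pf:thm-pertubate-increase-2:3.1}, already established via Lemma \ref{lemma:almost-continuity-entropy:1}(1)) and compare the conditional terms with $\sum_Q a_Q H(\nu_Q^{t_0},\D_{3nm})$ using bounded-overlap again. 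Alternatively, and perhaps more cleanly, one argues directly that the "defect" in the convexity upper bound, namely $H(\mu_{t_0},\D_{3nm})-\sum_Q a_Q H(\nu_Q^{t_0},\D_{3nm})$, equals the mutual-information-type quantity that is sandwiched between $0$ and $H(\text{labels},\,\text{but grouped by }\D_n\text{-cells})$, and the latter is $\le H(\mu_{t_0},\D_n)+O_K(1)=\sum_Q a_Q\log\frac1{a_Q}+O_K(1)$ while also being $\ge\sum_Q a_Q\log\frac1{a_Q}-O_K(1)$ by the bounded-overlap structure. Either route yields the claimed identity with an $O_K(1)$ error.

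For \eqref{eq:pf:lemma:pf-thm-pertubate-increase-2:1:2} the argument is verbatim the same, once one notes that the combinatorial coefficients $a_Q=\sum_{I\in A_Q}p_I$ are defined using only $\F_{t_0}$ (through the sets $A_Q$, which depend on $t_0$ and $x_0$ but not on $t$) and the probability vector $p$, so $\mu_t=\sum_{I\in\Lambda_n}p_I f_{I,t}\mu_t=\sum_Q a_Q\nu_Q^t$ is again a convex combination with the same weights $a_Q$. The only point requiring a moment's care is that the bounded-overlap estimate must now be uniform in $t$ in a neighborhood of $t_0$: each $f_{I,t}(K_t)$ (where $K_t$ is the attractor of $\F_t$) still has diameter $O(2^{-n})$ with an implied constant uniform over $t\in B(t_0,\epsilon_0)\cap J$, which follows from the regularity hypothesis \eqref{eq:definition-IFS-regularity:1} on the maps $t\mapsto r_{i,t},\,t\mapsto s_{i,t}$ (in particular the $C^2$, hence Lipschitz, dependence on a compact subinterval). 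Hence the same $O_K(1)$ bound applies, where $K$ may be taken as (a neighborhood of) $\bigcup_{t\in B(t_0,\epsilon_0)\cap J}K_t$, a bounded set.

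The main obstacle is the middle step: one must be careful that the overlap between the supports of the $\nu_Q^{t_0}$'s across different dyadic $\D_n$-cells is genuinely $O_K(1)$-bounded and does not secretly reintroduce the large $\sum_Q a_Q\log\frac1{a_Q}$ term into the error. The cleanest way to control this is to phrase everything through Lemma \ref{lemma:almost-continuity-entropy:1}(1) and (3) applied to the pair of partitions $\D_n$ and $\{f_{I,t_0}(K):I\in\Lambda_n\}$-type refinements, rather than trying to track supports by hand; since each $\D_n$-cell meets $O_K(1)$ of the cylinder images and vice versa, all the relevant entropy comparisons cost only $O(\log O_K(1))=O_K(1)$. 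Everything else is a routine bookkeeping of convexity bounds.
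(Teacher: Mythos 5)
Your ``route 2'' — expressing the defect $H(\mu_{t_0},\D_{3nm})-\sum_Q a_Q H(\nu_Q^{t_0},\D_{3nm})$ as a mutual-information quantity, then showing the residual conditional entropy $H(\mathrm{label}\,Q \mid \D_{3nm}\text{-cell})$ is $O_K(1)$ because each $\D_{3nm}$-cell meets at most $O_K(1)$ of the supports of the $\nu_Q$'s — is correct and is, up to a change of coordinates, the same argument the paper runs. The paper lifts everything to the symbolic space $\Lambda^\N$ with the Bernoulli measure $\tilde\mu=p^\N$, and compares two partitions: $\mathcal A$ (the joint partition by which $A_Q$ the cylinder lives in and which $\D_{3nm}$-cell the coding map hits) against $\mathcal C$ (the partition by $\D_{3nm}$-cell alone). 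The condition that $\mathcal A$ and $\mathcal C$ are $O_K(1)$-commensurate is exactly your distinguishability observation, and Lemma \ref{lemma:almost-continuity-entropy:1}(3) applied to $\mathcal A$ yields the two-term decomposition. So the two proofs differ only in presentation: the paper avoids the spillover issue entirely by never restricting in $\R$, while you handle it by the mutual-information sandwich. Both are valid.

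However, your ``route 1'' — decomposing $H(\mu_{t_0},\D_{3nm})=H(\mu_{t_0},\D_n)+\sum_{a_1\in\D_n}\mu_{t_0}(a_1)H\bigl((\mu_{t_0})_{a_1},\D_{3nm}\bigr)$ and then comparing the conditional entropies to $\sum_Q a_Q H(\nu_Q^{t_0},\D_{3nm})$ — does not go through as cleanly as you suggest. The normalized restriction $(\mu_{t_0})_{a_1}$ is a reweighted mixture of the parts of several $\nu_Q^{t_0}$'s that land in $a_1$, not simply $\nu_{a_1}^{t_0}$: each $\nu_Q^{t_0}$ spills into $O_K(1)$ neighboring cells, and after restricting to $a_1$ the relative weights change. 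Turning ``bounded overlap'' into an $O_K(1)$ comparison of these conditional entropies requires exactly the joint-partition bookkeeping that route 2 (or the paper's symbolic-space argument) supplies; as written, route 1 leaves a genuine gap. If you intend route 2 to be your actual proof, you should make the converse bounded-overlap statement — that for any $Q'\in\D_{3nm}$, only $O_K(1)$ of the measures $\nu_Q^{t_0}$ can charge $Q'$, because $Q'$ sits in a single $\D_n$-cell and the relevant $Q$ must be within $O_K(1)$ cells of it — explicit, since that is the crux of bounding the conditional label entropy.
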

The proof of Lemma \ref{lemma:pf-thm-pertubate-increase-2:1} is postponed after the proof of Theorem \ref{thm:main:pertubate-increase:2}.  

By \eqref{eq:pf:thm-pertubate-increase-2:1}, we have $H(\mu_{t_0},\D_{3nm}) = 3nm (\dim \mu_{t_0}\pm \epsilon)$ and for each $Q\in \D_n$ and $I\in A_Q$,   $H(f_{I,t_0}\mu_{t_0},\D_{3nm}) = (3m-1)n (\dim \mu_{t_0}\pm \epsilon)$ (noting that $f_{I,t_0}$ has contraction ratio about $2^{-n}$). Thus by Lemma \ref{lemma:convexity-entropy:1} (1),  we have $H(\nu_Q^{t_0},  \D_{3nm}) \ge (3m-1)n (\dim \mu_{t_0} -  \epsilon)$.  In view of this,   \eqref{eq:pf:thm-pertubate-increase-2:3.1} and \eqref{eq:pf:lemma:pf-thm-pertubate-increase-2:1:1},  we infer that there exists $B_2\subset \D_n$ such that $\sum_{Q\in B_2}a_Q\ge 1-o_{\epsilon}(1)$ and for each $Q\in B_2$,  we have 
\begin{equation}\label{eq:pf:thm-pertubate-increase-2:7}
H(\nu_Q^{t_0},  \D_{3nm}) \le  (3m-1)n (\dim \mu_{t_0} +o_\epsilon(1)).
\end{equation}

We now need the following estimates about $H(\nu_Q^{t_0},  \D_{3nm}) $ and $H(\nu_Q^{t},  \D_{3nm}) $.  
\begin{lemma}\label{lemma:pf-thm-pertubate-increase-2:2}
For $Q\in \D_n$,  we have
\begin{equation}\label{eq:pf:lemma:pf-thm-pertubate-increase-2:2:1}
H(\nu_Q^{t},  \D_{3nm}) \ge  (3m-1)n (\dim \mu_{t_0}-o_\epsilon(1)).
\end{equation}
\end{lemma}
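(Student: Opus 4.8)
The plan is to derive the bound from a lower estimate for $H(\mu_t,\D_{(3m-1)n})$.

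\emph{Step 1: reduction to $\mu_t$.} By Lemma~\ref{lemma:convexity-entropy:1}(1), $H(\nu_Q^t,\D_{3nm})\ge a_Q^{-1}\sum_{I\in A_Q}p_I\,H(f_{I,t}\mu_t,\D_{3nm})$. For $I\in\Lambda_n$ one has $|I|=O(n)$, and since $t\mapsto r_{i,t}$ is $C^2$ on a compact neighbourhood of $t_0$ while $|t-t_0|\le\epsilon_0=2^{-(n_0+n_1+n_2)m}$, the estimate $\bigl|\log|r_{I,t}|-\log|r_{I,t_0}|\bigr|=O(|I|\,|t-t_0|)=o(1)$ holds; together with $|r_{I,t_0}|\asymp 2^{-n}$ (definition of $\Lambda_n$) this gives $|r_{I,t}|\asymp 2^{-n}$ with constants depending only on $\F$. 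Hence $f_{I,t}^{-1}\D_{3nm}$ is a partition of $\R$ into intervals commensurable to the cells of $\D_{(3m-1)n}$, so $H(f_{I,t}\mu_t,\D_{3nm})=H(\mu_t,f_{I,t}^{-1}\D_{3nm})=H(\mu_t,\D_{(3m-1)n})+O(1)$ by Lemma~\ref{lemma:almost-continuity-entropy:1}(1), uniformly in $I\in\Lambda_n$ and independently of $Q$. Averaging yields $H(\nu_Q^t,\D_{3nm})\ge H(\mu_t,\D_{(3m-1)n})-O(1)$, and it remains to show $H(\mu_t,\D_{(3m-1)n})\ge(3m-1)n\bigl(\dim\mu_{t_0}-o_\epsilon(1)\bigr)$.

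\emph{Step 2: coarse scales are inherited from $\mu_{t_0}$.} By \eqref{eq:definition-IFS-regularity:1} the coding maps satisfy $\|\pi_t-\pi_{t_0}\|_\infty\le C|t-t_0|=2^{-nm+O(1)}$; since $\mu_t$ and $\mu_{t_0}$ are the images of the same Bernoulli measure on $\Lambda^\N$ under $\pi_t$ and $\pi_{t_0}$, they are coupled with displacement $\le 2^{-nm+O(1)}$. A routine coupling/conditioning argument then gives $|H(\mu_t,\D_j)-H(\mu_{t_0},\D_j)|=O(1)$ whenever $j\le nm-O(1)$; combined with \eqref{eq:pf:thm-pertubate-increase-2:1} this yields
\[
H(\mu_t,\D_j)\ \ge\ j\bigl(\dim\mu_{t_0}-o_\epsilon(1)\bigr)\qquad\text{for all }n_0\le j\le nm-O(1).
\]

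\emph{Step 3: bootstrap to scale $(3m-1)n$ by self-similarity.} As $(3m-1)n$ is roughly three times larger than $nm\approx\log_2(1/|t-t_0|)$, Step 2 does not reach it directly; one propagates the estimate using that $\mu_t$ is itself self-similar. Writing $\mu_t=\sum_J p_J f_{J,t}\mu_t$ over a stopping set for $\F_t$ (chosen deep enough that each $f_{J,t}$ has ratio of a fixed small order and maps the attractor of $\F_t$ into an interval of length $\ll 2^{-a}$), and using Lemma~\ref{lemma:almost-continuity-entropy:1}(3) to write $H(\mu_t,\D_{a+b})=H(\mu_t,\D_a)+\sum_{C\in\D_a}\mu_t(C)H(\mu_t^C,\D_b)$, each component $\mu_t^C$ is a convex combination of homothetic copies of $\mu_t$ (with ratios bounded away from $0$ and $\infty$) together with a remainder carrying only a small fraction of the mass. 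Lemma~\ref{lemma:convexity-entropy:1}(1) and Lemma~\ref{lemma:almost-continuity-entropy:1} then give a near-superadditivity estimate $H(\mu_t,\D_{a+b})\ge H(\mu_t,\D_a)+H(\mu_t,\D_b)-o(a+b)$. Splitting $(3m-1)n$ into three blocks of size between $n_0$ and $nm-O(1)$, applying Step 2 to each, and summing, we obtain the desired lower bound on $H(\mu_t,\D_{(3m-1)n})$, and hence the lemma.

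The main obstacle is Step 3: making the near-superadditivity estimate work in the presence of overlaps in $\F_t$. With overlaps, $\mu_t^C$ can be a combination of a very large number of scaled copies of $\mu_t$, and the pieces of copies straddling a $\D_a$-boundary must be shown to be negligible; this is handled by averaging over translates of the dyadic grid and passing to a sufficiently deep stopping set. This mechanism is essentially the one underlying exact-dimensionality of self-similar measures, and it is here that the argument is technically delicate; the remaining steps are routine bookkeeping with the entropy lemmas.
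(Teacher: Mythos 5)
Your proposal is correct and takes a genuinely different (coarser) route than the paper's. Both start with the same reduction (your Step~1, the paper's passage through $H(f_{I,t}\mu_t,\D_{3nm})$ via convexity and ratio comparability). From there the paper transfers the \emph{component-wise} statement \eqref{eq:pf:thm-pertubate-increase-2:2.1} (uniform entropy dimension of $\mu_{t_0}$) to $\mu_t$ and then uses the scale-averaging formula \eqref{eq:pf:lemma-pf-thm-pertubate-increase-2:2:5} together with the self-similarity decomposition and the Gibbs inequality on each of $3m-1$ blocks of $n$ scales. You instead transfer only the \emph{global} asymptotic \eqref{eq:pf:thm-pertubate-increase-2:1} to $\mu_t$ at scales $\lesssim nm$ (your Step~2, which is indeed routine once phrased on $\Lambda^{\N}$: the partitions $\pi_t^{-1}\D_j$ and $\pi_{t_0}^{-1}\D_j$ are commensurate in the sense of Lemma~\ref{lemma:almost-continuity-entropy:1}(1) as soon as $2^{-j}$ dominates $\|\pi_t-\pi_{t_0}\|_\infty$), and then bootstrap with near-superadditivity using only three blocks. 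Your route needs weaker local input and is a valid alternative.

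One remark on Step~3, which you flag as the "main obstacle": the overlap/boundary issue you worry about is actually a non-issue, and no averaging over translates or choice of a very deep stopping set is needed. The near-superadditivity in fact holds with an $O(1)$ loss, not merely $o(a+b)$: by Lemma~\ref{lemma:almost-continuity-entropy:1}(3),
\[
H(\mu_t,\D_{a+b}) = H(\mu_t,\D_a) + \sum_{C\in\D_a}\mu_t(C)\,H(\mu_t^C,\D_b),
\]
and writing $\mu_t=\sum_{I\in\Lambda^t_a}p_I f_{I,t}\mu_t$ with $|r_{I,t}|\asymp 2^{-a}$, concavity (Lemma~\ref{lemma:convexity-entropy:1}(1)) applied cell-by-cell gives $\mu_t(C)H(\mu_t^C,\D_b)\ge\sum_I p_I(f_{I,t}\mu_t)(C)\,H\bigl((f_{I,t}\mu_t)^C,\D_b\bigr)$. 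Summing over $C$ and using Lemma~\ref{lemma:almost-continuity-entropy:1}(3) again for each $f_{I,t}\mu_t$ yields
\[
\sum_{C}\mu_t(C)H(\mu_t^C,\D_b)\ \ge\ \sum_I p_I\bigl[H(f_{I,t}\mu_t,\D_{a+b})-H(f_{I,t}\mu_t,\D_a)\bigr]\ \ge\ H(\mu_t,\D_b)-O(1),
\]
since $H(f_{I,t}\mu_t,\D_{a+b})=H(\mu_t,\D_b)-O(1)$ by Lemma~\ref{lemma:almost-continuity-entropy:1}(2) and $H(f_{I,t}\mu_t,\D_a)=O(1)$ because the support of $f_{I,t}\mu_t$ lies in $O(1)$ cells of $\D_a$. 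This is exactly the mechanism the paper uses (the Gibbs inequality in \eqref{eq:pf:lemma-pf-thm-pertubate-increase-2:2:7} is the same concavity estimate in disguise); cylinders straddling a $\D_a$-boundary never get split, since the sum over $I$ reconstitutes the whole measure. With that clarification your sketch is complete.
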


\begin{lemma}\label{lemma:pf-thm-pertubate-increase-2:3}
There exists $\delta_2>0$,  depending only on $\delta_1$,  $\beta$ and $\mu_{t_0}$, such that for each $Q\in B_1\cap B_2$,  we have
\begin{equation}\label{eq:pf:lemma:pf-thm-pertubate-increase-2:2:1}
H(\nu_Q^{t},  \D_{3nm}) \ge  (3m-1)n (\dim \mu_{t_0}+\delta_2).
\end{equation}
\end{lemma}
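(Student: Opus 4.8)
The plan is to combine the hypotheses $Q\in B_1$ and $Q\in B_2$ with the $\beta$‑transversality condition. At $t_0$ the pieces $\{f_{I,t_0}\mu_{t_0}\}_{I\in A_Q}$ overlap so heavily that $\nu^{t_0}_Q$ has essentially the entropy of a single summand ($Q\in B_2$), although the family carries $\ge n\delta_1$ bits of weight entropy ($Q\in B_1$); moving to $t$, transversality forces the pieces to pull apart at scales down to $2^{-3nm}$, and Hochman's inverse theorem (Theorem \ref{theorem:Hochman-inverse-thm}) converts this into an entropy gain for $\nu^t_Q$. Concretely, put $N=(3m-1)n$; since the homothety $S_Q$ carries $\D_{3nm}$ onto $\D_N$ it is equivalent to bound $H(S_Q\nu^t_Q,\D_N)$ below, and $S_Q\nu^t_Q=\sum_{I\in A_Q}q_I\,g_{I,t}\mu_t$, where $q_I=p_I/a_Q$ and $g_{I,t}=S_Q\circ f_{I,t}$ is affine with slope of absolute value in a fixed interval $[c,C]$ and $g_{I,t}(0)\in[-O(1),1+O(1)]$ (by $|r_{I,t_0}|\asymp2^{-n}$, \eqref{eq:definition-IFS-regularity:1} and $|t-t_0|\le 2^{-nm+m}$). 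After splitting $A_Q$ into $O_{\F_{t_0},\Lambda}(1)$ subfamilies on which the slopes agree within a factor $2$ (which costs $O(1)$ in entropy, by Lemma \ref{lemma:convexity-entropy:1}) we may, using a superposition version of Theorem \ref{theorem:Hochman-inverse-thm} valid for sums $\sum_Iq_IT_{x_I}\eta_I$ of translates of measures $\eta_I$ sharing the uniform entropy dimension of $\mu_t$, treat $S_Q\nu^t_Q$ as the convolution $\theta_t*\eta_t$ with $\theta_t=\sum_Iq_I\delta_{x_I(t)}$, $x_I(t)=g_{I,t}(0)$, and $\eta_t$ a fixed unit‑scale affine copy of $\mu_t$. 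We record: $\sum_Iq_I\log\tfrac1{q_I}\ge n\delta_1$; $H(\theta_t,\D_N)\le\log|A_Q|=O_{\F_{t_0},\Lambda}(n)$; $H(\eta_t,\D_N)=H(\mu_t,\D_N)+O(1)\ge N(\dim\mu_{t_0}-o_\epsilon(1))$ (the same estimate that yields Lemma \ref{lemma:pf-thm-pertubate-increase-2:2}); and $H(S_Q\nu^{t_0}_Q,\D_N)\le N(\dim\mu_{t_0}+o_\epsilon(1))$ ($Q\in B_2$).

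Fix $\eta_0>0$ with $\dim\mu_{t_0}<1-\eta_0$. If $\dim\mu_t\ge\dim\mu_{t_0}+\eta_0/2$, the conclusion follows at once from $H(S_Q\nu^t_Q,\D_N)\ge H(\eta_t,\D_N)-O(1)$ and $H(\eta_t,\D_N)\ge N(\dim\mu_t-o_n(1))$ (valid since self‑similar measures have uniform entropy dimension equal to their dimension, with constants uniform for $t$ in a compact subinterval of $J$), on taking $\delta_2<\eta_0/2$ and $n$ large. So assume $\dim\mu_t<1-\eta_0/2$; then, for a small fixed $\epsilon'$ and large $k_1$, the components $\eta_t^{\D_j(\cdot)}$ fail to have $\D_{k_1}$‑entropy $\ge(1-\epsilon')k_1$ at all but $o_n(1)N$ scales $j\le N$. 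Suppose, for contradiction, that $H(S_Q\nu^t_Q,\D_N)<N(\dim\mu_{t_0}+\delta_2)$. Then $H(\eta_t*\theta_t,\D_N)\le H(\eta_t,\D_N)+(\delta_2+o_\epsilon(1))N$; fixing the Hochman parameters $(\epsilon',k_1,k_2)$ first (yielding $\delta'>0$) and then choosing $\epsilon,\delta_2$ small and $n$ large so that $\delta_2+o_\epsilon(1)<\delta'$, Theorem \ref{theorem:Hochman-inverse-thm} applies with $\mu=\eta_t$, $\nu=\theta_t$: the set $I$ of ``$\mu$‑full'' scales has $|I|=o_n(1)N$, hence the set $J$ of scales with $\theta_t^{\D_j(\cdot)}$ being $(1-\epsilon')$‑concentrated in one $\D_{k_2}$‑cell has $|J|\ge(1-\epsilon'-o_n(1))N$, and summing entropy increments gives $H(\theta_t,\D_N)\le C(\epsilon'+k_2^{-1})N+o_n(1)N$.

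It remains to show $H(\theta_t,\D_N)\ge\rho n$ for some $\rho=\rho(\delta_1,\beta,\mu_{t_0})>0$: choosing $\epsilon'$ small and $k_2$ large so that $C(\epsilon'+k_2^{-1})(3m-1)<\rho/2$, and $n$ large so that $o_n(1)(3m-1)<\rho/2$, this contradicts the last display and finishes the proof, with $\delta_2=\delta_2(\delta_1,\beta,\mu_{t_0})$. Since the weights $(q_I)$ have entropy $\ge n\delta_1$, it suffices to show that no $\D_N$‑cell contains more than $2^{o(n)}$ of the atoms $x_I(t)$, for then collisions cost only $o(n)$ bits and $H(\theta_t,\D_N)\ge n\delta_1-o(n)$. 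For $I\ne I'$, writing $P=I\wedge I'$, $I=P\widetilde I$, $I'=P\widetilde I'$ and choosing any tail $w$, one has
\[
x_I(t)-x_{I'}(t)=2^n r_{P,t}\bigl(\Delta_{\widetilde I w,\widetilde I' w}(t)+O(|r_{\widetilde I,t}|+|r_{\widetilde I',t}|)\bigr),
\]
with error $O(1)$ because $|r_{P,t}||r_{\widetilde I,t}|=|r_{I,t}|\asymp2^{-n}$; $\beta$‑transversality applies to the pair $(\widetilde I w,\widetilde I' w)$ (for which $d(\cdot,\cdot)=1$), and propagating the derivative lower bound across $[t_0,t]$ via the $C^2$‑bound in \eqref{eq:definition-IFS-regularity:1} shows that the perturbation separates $x_I(t)$ from $x_{I'}(t)$ beyond scale $2^{-N}$ except for pairs in a small exceptional set; a union bound over the $\le|A_Q|^2=2^{O(n)}$ pairs then bounds the multiplicity in any $\D_N$‑cell by $2^{o(n)}$. \textbf{This is the main obstacle:} the transversality accounting must be carried out pointwise in the fixed parameter $t$, not in $L^1$ over $t$ as in the usual Peres--Schlag scheme, and one must reconcile the scales $2^{-n}$ (piece size), $2^{-nm}$ (perturbation size), $2^{-N}=2^{-(3m-1)n}$ (resolution) and $d(\widetilde I,\widetilde I')^\beta$ (transversality strength) — which is precisely why $m=\lfloor\beta+3\rfloor$ and the scale $3nm$ were chosen; it is here, too, that $Q\in B_2$ re‑enters, through its consequence $H(\theta_{t_0},\D_N)\le o_\epsilon(1)N$ (the $t_0$‑analogue of the second paragraph), to guarantee that the exceptional pairs carry no positive fraction of the weight entropy.
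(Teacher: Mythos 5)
Your architecture matches the paper's: $Q\in B_1$ supplies weight entropy, $Q\in B_2$ plus Theorem \ref{theorem:Hochman-inverse-thm} at $t_0$ collapses the translation part $\theta_{t_0}$ to $o(n)$ entropy, transversality boosts $\theta_t$, and a second application of the inverse theorem turns that into the entropy gain for $\nu_Q^t$. Your explicit handling of the case $\dim\mu_t \ge \dim\mu_{t_0}+\eta_0/2$ is actually more careful than the paper's. However, there are two concrete gaps. First, the appeal to a ``superposition version'' of Theorem \ref{theorem:Hochman-inverse-thm} for sums $\sum_I q_I T_{x_I}\eta_I$ with slopes only comparable up to a factor $2$ is not proved in the paper and does not follow from the inverse theorem as stated. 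The paper sidesteps this entirely: it partitions $A_Q$ by the \emph{exact} contraction ratio $r_{I,t}$, of which there are at most $n^C = 2^{o(n)}$, so the partition costs $o(n)$ entropy via Lemma \ref{lemma:convexity-entropy:1}, and each class $\nu_{Q,r}^t = \eta_{Q,r}^t * r\mu_t$ is then a genuine convolution to which the inverse theorem applies verbatim. You should use that decomposition.

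Second, and more seriously, the proposed multiplicity bound (``no $\D_N$-cell contains more than $2^{o(n)}$ atoms $x_I(t)$'') does not follow from transversality, and your display is self-undermining: as you note, the error $O(|r_{\tilde I,t}|+|r_{\tilde I',t}|)$ becomes $O(1)$ after rescaling by $2^n r_{P,t}$, which is useless at resolution $2^{-N}$, and the prefix decomposition $\Delta_{\tilde Iw,\tilde I'w}$ with $d=1$ controls only the suffix contribution, not the full $\Delta_{x_I,x_{I'}}$. The ingredient the paper uses in place of a multiplicity bound is Lemma \ref{lemma:pf-thm-pertubate-increase-2:4}, a \emph{dichotomy}: for any weighted subset of $\Lambda_n$ with weight entropy $\ge n\delta$, either the $t_0$-atom measure or the $t$-atom measure has $\D_{3nm}$-entropy $\ge \rho n$. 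Its proof does not split indices by common prefix; it takes $x_I = Ia^\infty$ so that $\Delta_{x_I,x_J}(t) = f_{I,t}(0)-f_{J,t}(0)$ \emph{exactly}, and it localizes to a positive-mass collection of $\D_{3nm}$-cells at $t_0$ with substantial conditional weight entropy, inside each of which the atoms are $2^{-3nm}$-close at $t_0$ so that a single application of $\beta$-transversality separates them by $2^{-2nm}$ at $t$. One then uses the $B_2$-derived bound $H(\theta_{t_0},\D_{3nm}) = o(n)$ to exclude the first branch. You do mention that this $B_2$ consequence ``re-enters,'' but you never formulate the dichotomy, and the union-bound-over-pairs route you sketch as an alternative does not close: there is no a priori reason for the pairs that collide at the fixed $t$ to carry negligible weight.
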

The proof of Lemmas \ref{lemma:pf-thm-pertubate-increase-2:2} and \ref{lemma:pf-thm-pertubate-increase-2:3}  are postponed after the proof of Theorem \ref{thm:main:pertubate-increase:2}.  

By Lemmas \ref{lemma:pf-thm-pertubate-increase-2:1}, \ref{lemma:pf-thm-pertubate-increase-2:2} and \ref{lemma:pf-thm-pertubate-increase-2:3},   we get 
\begin{equation}\label{eq:pf:thm-pertubate-increase-2:7.1}
\frac{1}{3nm} H(\mu_{t},\D_{3nm}) \ge \dim \mu_{t_0}+\delta_3
\end{equation}
for some $\delta_3>0$ depending only on $\delta_1, \beta$ and $\mu_{t_0}$.   
It is known  (see \cite[Section 3]{Peres-Solomyak}) that for any $M\ge 1$ there exists constant $C$ such that for any self-similar measure $\eta$ on $[-M,M]$ whose defining IFS $\Phi=\{\phi_i(x)=r_ix +s_i\}_i$ satisfying $\min_i|r_i|\ge 1/M$, the following holds for all $k\ge 1$:
\begin{equation}\label{eq:pf:thm-pertubate-increase-2:7.1.1}
\dim \eta \ge \frac{1}{k} H(\eta,\D_k)-\frac{C}{k}.
\end{equation}
We may assume that we initially work in a small neighbourhood $U$ of the parameter $t_0$ so that there exists absolute constant $M\ge 1$ such that for all $t\in U$ the attractor of $\mathcal{F}_t$ is included $[-M,M]$  and $\min_{i\in \Lambda}|r_{i,t}|\ge 1/M$. Thus for each  $t\in U$ the measure $\mu_t$ satisfies  \eqref{eq:pf:thm-pertubate-increase-2:7.1.1}. The absolute constant $C$ is independent from $\epsilon_0$ and $n$.
Using this and \eqref{eq:pf:thm-pertubate-increase-2:7.1}, we get
\[
\dim \mu_t\ge \dim \mu_{t_0} +\delta_3/2,
\]
provided $\epsilon_0$ was assumed small enough (thus $n$ is large enough). 
\end{proof}

Let us now give the proofs of Lemmas \ref{lemma:pf-thm-pertubate-increase-2:1}, \ref{lemma:pf-thm-pertubate-increase-2:2} and \ref{lemma:pf-thm-pertubate-increase-2:3}.   
\begin{proof}[Proof of Lemma \ref{lemma:pf-thm-pertubate-increase-2:1}]
Recall that $p=(p_i)_{i\in \Lambda}$ is a probability vector.   Denote by $\tilde{\mu}$ the Bernoulli measure $p^\N$ on $\Lambda^\N$.  
For $Q\in \D_n(\R)$, let 
\[
\mathcal{A}_Q=\left\{  \left(\cup_{I\in A_Q}[I]\right)\cap \pi_{t_0}^{-1} (Q'):   Q'\in \D_{3nm}(\R)  \right\}
\]
and $\mathcal{A} = \cup_{Q\in \D_n} \mathcal{A}_Q$. Let 
\[
\mathcal{C} = \left\{  \pi_{t_0}^{-1} (Q'):   Q'\in \D_{3nm}(\R) \right\}.
\]
Clearly, each element of $\mathcal{A}$ intersects only one element  of $\mathcal{C}$. On the other hand, since for each $I\in \Lambda_n$,   ${\rm diam}(f_{I,t_0}(K))\le {\rm diam}(K)\cdot 2^{-n}\cdot (\min_{i}|r_{i,t_0}|)^{-1}$,  by definition of $A_Q$, it follows that for each $x\in \R$, we have 
\[
\left|\left\{Q\in \D_n(\R): x\in \pi_{t_0}\left(\cup_{I\in A_Q}[I]\right)\right\}\right| \le\  \frac{{\rm diam}(K)}{ \min_{i}|r_{i,t_0}|}+1.
\]
This implies that each element of $\mathcal{C}$  intersects  at most $\left({\rm diam}(K)\cdot (\min_{i}|r_{i,t_0}|)^{-1} +1\right)$ elements  of $\mathcal{A}$. 
Thus, by Lemma \ref{lemma:almost-continuity-entropy:1} (1),  we have 
\begin{equation}\label{eq:pf:lemma-pf-thm-pertubate-increase-2:1:1}
\left| \sum_{e\in \mathcal{A}} \tilde{\mu}(e)\log  \tilde{\mu}(e)^{-1} - \sum_{e\in \mathcal{C}} \tilde{\mu}(e)\log  \tilde{\mu}(e)^{-1} \right| \le O_K(1). 
\end{equation}
Recall that $a_Q=\sum_{I\in A_Q}p_I=\sum_{e\in \mathcal{A}_Q}\tilde{\mu}(e)$.
By Lemma \ref{lemma:almost-continuity-entropy:1} (3),   we have 
\begin{eqnarray}
 \sum_{e\in \mathcal{A}} \tilde{\mu}(e)\log  \tilde{\mu}(e)^{-1}  & = & \sum_{Q\in \D_n} \tilde{\mu}\left(\bigcup_{e\in \mathcal{A}_Q}e\right)\log \tilde{\mu}\left(\bigcup_{e\in \mathcal{A}_Q}e\right)^{-1} +   \sum_{Q\in \D_n} a_Q\sum_{e\in \mathcal{A}_Q}\frac{\tilde{\mu}(e)}{a_Q} \log \frac{a_Q}{\tilde{\mu}(e)}\nonumber\\
& = & \sum_{Q\in \D_n}a_Q\log\frac{1}{a_Q} +\sum_{Q\in \D_n} a_Q H(\nu_Q^{t_0},\D_{3nm}). \label{eq:pf:lemma-pf-thm-pertubate-increase-2:1:2}
\end{eqnarray}
We also have 
\begin{equation}\label{eq:pf:lemma-pf-thm-pertubate-increase-2:1:3}
\sum_{e\in \mathcal{C}} \tilde{\mu}(e)\log  \tilde{\mu}(e)^{-1} =H(\mu_{t_0},\D_{3nm}).
\end{equation}
Combining \eqref{eq:pf:lemma-pf-thm-pertubate-increase-2:1:1}, \eqref{eq:pf:lemma-pf-thm-pertubate-increase-2:1:2} and \eqref{eq:pf:lemma-pf-thm-pertubate-increase-2:1:3},   we get the desired formula for $H(\mu_{t_0},\D_{3nm})$.  

 Now,  we turn to the formula for $H(\mu_{t},\D_{3nm})$.   For $Q\in \D_n(\R)$,  let 
\[
\mathcal{A}_Q^t=\left\{  \left(\cup_{I\in A_Q}[I]\right)\cap \pi_{t}^{-1} (Q'):   Q'\in \D_{3nm}(\R)  \right\}
\]
and $\mathcal{A} ^t= \cup_{Q\in \D_n} \mathcal{A}_Q^t$. Let 
\[
\mathcal{C}^t = \left\{  \pi_{t}^{-1} (Q'):   Q'\in \D_{3nm}(\R) \right\}.
\]
Recall that the IFSs $\{\F_t\}_t$ satisfy the condition \eqref{eq:definition-IFS-regularity:1}. Thus there exists $C>0$ such that for any $x\in \Lambda^\N$,  we have 
\[
|f_{x,t}(0)-f_{x,t_0}(0) |\le C|t-t_0|.
\] 
Since $|t-t_0| \le  2^{-(n-1)m}$ and $m=\lfloor \beta+3\rfloor$,  by replacing $m$ with a larger constant if necessary,  we may assume that we have $C|t-t_0|\le 2^{-n-2}$.  Using this, it is not hard to check that for each $x\in \R$, we have 
\[
\left|\left\{Q\in \D_n(\R): x\in \pi_{t}\left(\cup_{I\in A_Q}[I]\right)\right\}\right| \le {\rm diam}(K)\cdot (\min_{i}|r_{i,t_0}|)^{-1} +3.
\]
This implies that each element of $\mathcal{C}^t$  intersects  at most $({\rm diam}(K)\cdot (\min_{i}|r_{i,t_0}|)^{-1} +3)$ elements  of $\mathcal{A}^t$.  Clearly, each element of $\mathcal{A}^t$ intersects only one element  of $\mathcal{C}^t$.  The rest of the arguments is the same as in the case for $H(\mu_{t_0},\D_{3nm})$. 
\end{proof}  

\begin{proof}[Proof of Lemma \ref{lemma:pf-thm-pertubate-increase-2:2}]
{\color{black}In the proof of Lemma \ref{lemma:pf-thm-pertubate-increase-2:1} we have seen that for any $x\in \Lambda^\N$,  we have 
\[
|f_{x,t}(0)-f_{x,t_0}(0) |\le 2^{-n-2}.
\] } 
From this, we infer that the L\'evy-Prokhorov distance between $\mu_t$ and $\mu_{t_0}$ is 
\begin{equation}\label{eq:pf:lemma-pf-thm-pertubate-increase-2:2:1}
d(\mu_t,\mu_{t_0}) \le 2^{-n}.
\end{equation}
Recall that the measure $\mu_{t_0}$ has uniform entropy dimension $\dim \mu_{t_0}$ (recall \eqref{eq:pf:thm-pertubate-increase-2:2.1}),  in particular  we have 
\begin{equation}\label{eq:pf:lemma-pf-thm-pertubate-increase-2:2:2}
\mu_{t_0} \left( x:\left| \left\{ 1\le k\le n: \left|H(\mu_{t_0}^{\D_k(x)},\D_l)-l\dim\mu_{t_0}\right| \le l \epsilon \right\} \right| \ge n(1-\epsilon) \right) \ge 1-\epsilon.
\end{equation}
From \eqref{eq:pf:lemma-pf-thm-pertubate-increase-2:2:1} and \eqref{eq:pf:lemma-pf-thm-pertubate-increase-2:2:2},  we infer that for any $h(x)=rx+s$ with $|r| = 2^{-N\pm 1}$, we have 
\begin{equation}\label{eq:pf:lemma-pf-thm-pertubate-increase-2:2:3}
(h\mu_{t}) \left( x:\left| \left\{ N\le k\le N+n: \left|H((h\mu_{t})^{\D_k(x)},\D_l)-l\dim\mu_{t_0}\right| \le 2 l \epsilon \right\} \right| \ge n(1-2\epsilon) \right) \ge 1-2\epsilon.
\end{equation}
 
 We now estimate $H(\nu_Q^t,\D_{3nm})$.  Recall that $\nu_Q^t=\frac{1}{a_Q}\sum_{I\in A_Q}p_If_{I,t}\mu_t$.  By Lemma \ref{lemma:convexity-entropy:1} (1),  in order to prove $H(\nu_Q^t,\D_{3nm})\ge (3m-1)n(\dim\mu_{t_0}-o_\epsilon(1))$, we only need to show that for each $I\in A_Q $, 
 \[
H(f_{I,t}\mu_{t},\D_{3nm})\ge (3m-1)n(\dim\mu_{t_0}-o_\epsilon(1)).
 \]
Since the contraction ratio of $f_{I,t}$ is $2^{-n+o(n)}$ \footnote{Since the maps $t\mapsto r_{i,t}, i\in \Lambda$ are $C^2$,  there exists absolute  constant $C>0$ such that $|r_{i,t}-r_{i,t_0}|\le C|t-t_0|$.  Recall that  $|t-t_0|=2^{-nm\pm m}$.   By replacing $m$ with a larger constant if necessary,  we may assume that $|t-t_0|\le 2^{-nm/2}|r_{i,t_0}|$.  Hence $|r_{I,t}|=|r_{I,t_0}|(1\pm 2^{-nm/2})^{O(n)}=2^{-n+o(n)}$.},  we have 
\begin{equation}\label{eq:pf:lemma-pf-thm-pertubate-increase-2:2:4}
H(f_{I,t}\mu_{t},\D_{3nm}) = H(\mu_{t},\D_{(3m-1)n}) +o(n). 
\end{equation}
We recall that for any measure $\eta\in \cP(\R)$,  the following formula holds (\cite[Lemma 3.4]{Hochman2014})
\[
H(\eta,\D_n)=\sum_{k=0}^{n-1}\sum_{Q\in \D_k}\eta(Q)\frac{H(\eta^Q,\D_l)}{l} +O(l),
\]
where the constant involving in $O(l)$ depends on the diameter of the support of $\eta$.  Using this we can write 
\begin{equation}\label{eq:pf:lemma-pf-thm-pertubate-increase-2:2:5}
H(\mu_t,\D_{(3m-1)n})=\sum_{j=0}^{3m-2}\ \ \sum_{k=nj}^{n(j+1)-1}\sum_{Q\in \D_k}\mu_t(Q)\frac{H(\mu_t^Q,\D_l)}{l} +O_K(l).
\end{equation}
For $j=0,\ldots,  3m-2$,  let 
\[\Lambda^t_{nj}=\{I\in \Lambda^*: |r_{I,t}|\le 2^{-jn}<|r_{I^-,t}|\}.\]
Then we have 
\begin{equation}\label{eq:pf:lemma-pf-thm-pertubate-increase-2:2:6}
\mu_t=\sum_{I\in \Lambda^t_{jn}} p_If_{I,t}\mu_t. 
\end{equation}
For $k\in \{nj,\ldots, n(j+1)-1\}$ and $Q\in \D_k$, using \eqref{eq:pf:lemma-pf-thm-pertubate-increase-2:2:6},  we have 
\begin{eqnarray}
  \mu_t(Q)H(\mu_t^Q,\D_l) & = & \sum_{Q'\in \D_{k+l}}\mu_t(Q') \log \frac{\mu_t(Q)}{\mu_t(Q')} \nonumber\\
   & = &\sum_{I\in \Lambda_{jn}}p_I\sum_{Q'\in \D_{k+l}}(f_{I,t}\mu_t)(Q')  \log \frac{\mu_t(Q)}{\mu_t(Q')} \nonumber\\
   & = & \sum_{I\in \Lambda_{jn}}p_I(f_{I,t}\mu_t)(Q)\sum_{Q'\in \D_{k+l}}\frac{(f_{I,t}\mu_t)(Q')}{(f_{I,t}\mu_t)(Q)}  \log \frac{\mu_t(Q)}{\mu_t(Q')}   \nonumber\\
& \ge &  \sum_{I\in \Lambda_{jn}}p_I(f_{I,t}\mu_t)(Q) H\left((f_{I,t}\mu_t)^{Q},\D_l\right),  \label{eq:pf:lemma-pf-thm-pertubate-increase-2:2:7}
\end{eqnarray}
where for the last inequality we have used the Gibbs inequality: $\sum_i a_i\log \frac{1}{b_i}\ge \sum_i a_i\log \frac{1}{a_i}$ for any probability vectors $(a_i)_i$ and $(b_i)_i$ with $b_i=0\Rightarrow a_i=0$.  
Now,  from  \eqref{eq:pf:lemma-pf-thm-pertubate-increase-2:2:3},  \eqref{eq:pf:lemma-pf-thm-pertubate-increase-2:2:5} and \eqref{eq:pf:lemma-pf-thm-pertubate-increase-2:2:7},  we infer that (writing $n'=(3m-1)n$)
\begin{equation}\label{eq:pf:lemma-pf-thm-pertubate-increase-2:2:8}
{\Small
\mu_{t} \left( x:\frac{1}{n'}\left| \left\{ 1\le k \le  n': H(\mu_{t}^{\D_k(x)},\D_l)\ge l(\dim\mu_{t_0}  -o_ \epsilon(1)) \right\} \right| \ge 1-o_{\epsilon}(1) \right) \ge 1-o_{\epsilon}(1).
}
\end{equation}
It follows that we have
\[
H(\mu_t,\D_{(3m-1)n}) \ge (3m-1)n(\dim\mu_{t_0}  -o_ \epsilon(1)).
\]
This completes the proof of Lemma \ref{lemma:pf-thm-pertubate-increase-2:2}.
\end{proof}

\begin{proof}[Proof of Lemma \ref{lemma:pf-thm-pertubate-increase-2:3}]

Let us fix $Q\in B_1\cap B_2$.  Denote $R=\{r_{I,t_0}:I\in A_Q\}$.   The   cardinality of $R$ is bounded by $n^{C}$ for some constant $C$ only depending on $\Lambda$ and $\F_{t_0}$.
For $r\in R$, let $A_{Q,r}=\{I\in A_Q: r_{I,t_0}=r\}$.   Let 
\[
a_{Q,r} = \sum_{I\in A_{Q,r}}p_I,  \  \   \nu_{Q,r} ^{t_0}= \frac{1}{a_{Q,r}}\sum_{I\in A_{Q,r}} p_I f_{I,t_0} \mu_{t_0}. 
\]
Since $\nu_{Q}^{t_0}=\sum_{r\in R}\frac{a_{Q,r}}{a_Q}\nu_{Q,r}^{t_0}$,   by Lemma \ref{lemma:convexity-entropy:1} (1),  we have 
\[
H(\nu_{Q}^{t_0},\D_{3nm}) \ge \sum_{r\in R} \frac{a_{Q,r}}{a_Q}H(\nu_{Q,r} ^{t_0},\D_{3nm}). 
\]
Since $Q\in B_2$,  we have (recall \eqref{eq:pf:thm-pertubate-increase-2:7})
\begin{equation}\label{eq:pf:lemma-pf-thm-pertubate-increase-2:3:0}
H(\nu_Q^{t_0},  \D_{3nm}) \le  (3m-1)n (\dim \mu_{t_0} +o_\epsilon(1)).
\end{equation}
 For each $r\in R$, the measure $\nu_{Q,r} ^{t_0}$ is the convolution of the measure $r\mu_{t_0}$ with another probability measure,  and $|r|$ is about $2^{-n}$,  hence we have
 \begin{equation}\label{eq:pf:lemma-pf-thm-pertubate-increase-2:3:0.1}
H(\nu_{Q,r}^{t_0},  \D_{3nm}) \ge H(r\mu_{t_0},\D_{3nm})-O(1) \ge (3m-1)n (\dim \mu_{t_0} -o_\epsilon(1)).
\end{equation}
By \eqref{eq:pf:lemma-pf-thm-pertubate-increase-2:3:0} and \eqref{eq:pf:lemma-pf-thm-pertubate-increase-2:3:0.1},  we infer that there exists $R'\subset R$ such that $\sum_{r\in R'} \frac{a_{Q,r}}{a_Q} \ge 1-o_\epsilon(1) $ and for each $r\in R'$ we have
\begin{equation}\label{eq:pf:lemma-pf-thm-pertubate-increase-2:3:1}
H(\nu_{Q,r}^{t_0},  \D_{3nm}) \le  (3m-1)n (\dim \mu_{t_0} +o_\epsilon(1)).
\end{equation}
Note that for each $r\in R$, we have
\begin{equation}\label{eq:pf:lemma-pf-thm-pertubate-increase-2:3:2}
\nu_{Q,r}^{t_0} =\eta_{Q,r}^{t_0}* r\mu_{t_0}
\end{equation}
where $\eta_{Q,r}^{t_0}=\frac{1}{a_{Q,r}}\sum_{I\in A_{Q,r}}p_I \delta_{f_{I,t_0}(0)}$.
Recall that the measure $\mu_{t_0} $ has uniform entropy dimension $\dim\mu_{t_0}$ (recall \eqref{eq:pf:thm-pertubate-increase-2:2.1}).  Hence in view of \eqref{eq:pf:lemma-pf-thm-pertubate-increase-2:3:1} and \eqref{eq:pf:lemma-pf-thm-pertubate-increase-2:3:2}, by Hochman's inverse theorem  (Theorem \ref{theorem:Hochman-inverse-thm}),  we must have 
\begin{equation}\label{eq:pf:lemma-pf-thm-pertubate-increase-2:3:3}
H(\eta_{Q,r}^{t_0},\D_{3nm}) = o_\epsilon(n).
\end{equation}
Since the cardinality of $R$ is bounded by  $n^C=2^{o(n)}$ and $\sum_{r\in R'}\frac{a_{Q,r}}{a_Q}\ge 1-o(1)$,  we get 
\begin{equation}\label{eq:pf:lemma-pf-thm-pertubate-increase-2:3:4}
H\left(\frac{1}{a_{Q}}\sum_{I\in A_{Q}}p_I \delta_{f_{I,t_0}(0)},\D_{3nm}\right) = H\left(\sum_{r\in R}\frac{a_{Q,r}}{a_Q}\eta_{Q,r}^{t_0},\D_{3nm}\right) = o(n).
\end{equation}

Now we need the following lemma whose proof is postponed after the proof of Lemma \ref{lemma:pf-thm-pertubate-increase-2:3}. 
\begin{lemma}\label{lemma:pf-thm-pertubate-increase-2:4}
Let $\delta>0$. There exists $\rho=\rho(\delta,\beta)>0$ such that the following holds for all $n$ large enough: Let $A\subset \Lambda_n$ and $(q_I)_{I\in A}$ be a probability vector.  Suppose that 
\[
\frac{1}{n}\sum_{I\in A} q_I\log \frac{1}{q_I} \ge \delta.
\]
Let $t_1,t_2\in J$ be such that $|t_1-t_2|=2^{-nm\pm m}$.  Then we have either 
\[
\frac{1}{n}H\left(\sum_{I\in A}q_I\delta_{f_{I,t_1}(0)}, \D_{3nm}\right) \ge \rho
\]
or 
\[
\frac{1}{n}H\left(\sum_{I\in A}q_I\delta_{f_{I,t_2}(0)}, \D_{3nm}\right) \ge \rho.
\]
\end{lemma}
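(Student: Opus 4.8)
\emph{Plan.} I would argue by contraposition. Suppose that \emph{both} $\tfrac1n H(\eta^{t_1},\D_{3nm})<\rho$ and $\tfrac1n H(\eta^{t_2},\D_{3nm})<\rho$, where $\eta^{t}:=\sum_{I\in A}q_I\,\delta_{f_{I,t}(0)}$, and derive a contradiction with the hypothesis $\tfrac1n\sum_{I\in A}q_I\log(1/q_I)\ge\delta$. The constant $\rho=\rho(\delta,\beta,\{\F_t\})>0$ is to be fixed (small) at the end. The role of $m=\lfloor\beta+3\rfloor\ge 3$ is purely to separate three scales: the contraction scale $2^{-n}$ of the cylinders indexed by $\Lambda_n$, the perturbation scale $|t_1-t_2|=2^{-nm\pm m}$, and the analysis scale $2^{-3nm}$; it is this separation that makes the two‑point dichotomy possible. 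As in the intended application I may assume $\Lambda_n$ is defined with respect to $t_1$ and that $|t_2-t_1|\le 2^{-nm+m}$, so that every cylinder contraction ratio at $t_1$ and at $t_2$ agrees with the one at $t_1$ up to a factor $1+o(1)$.

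\emph{Step 1: from low entropy to clumping.} A standard entropy‑concentration estimate turns $H(\eta^{t_i},\D_{3nm})<\rho n$ into the statement that there is $A_i\subseteq A$ with $q(A_i)\ge 1-\sqrt\rho$ whose image $\{f_{I,t_i}(0):I\in A_i\}$ meets at most $2^{3nm\sqrt\rho}$ cells of $\D_{3nm}$ (keep the cells carrying $\eta^{t_i}$‑mass $\ge 2^{-3nm\sqrt\rho}$; the complementary cells carry mass $<\sqrt\rho$ by the entropy bound). With $A_*:=A_1\cap A_2$ we still have $q(A_*)\ge 1-2\sqrt\rho$ and $\sum_{I\in A_*}q_I\log(q(A_*)/q_I)\ge \delta n-O(\sqrt\rho\,n)$, while the map $I\mapsto(\D_{3nm}(f_{I,t_1}(0)),\D_{3nm}(f_{I,t_2}(0)))$ on $A_*$ takes at most $2^{6nm\sqrt\rho}$ values. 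Applying the chain rule for entropy (Lemma~\ref{lemma:almost-continuity-entropy:1}(3)) to this map and pigeonholing, and choosing $\rho$ small compared with $\delta$ and $m$, I obtain a single fibre $A'\subseteq A_*$ — a set on which all $f_{I,t_1}(0)$ lie in one interval $Q_1\in\D_{3nm}$ and all $f_{I,t_2}(0)$ lie in one interval $Q_2\in\D_{3nm}$ — whose weights still satisfy $\sum_{I\in A'}q_I\log(q(A')/q_I)\ge\delta n/2$; in particular $|A'|\ge 2^{\delta n/2}$.

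\emph{Step 2: a good transversal pair via tree‑counting.} Now $A'\subseteq\Lambda_n$ is an antichain of exponential size, so its longest common prefix $w$ (the iterated $\wedge$ of all words of $A'$, possibly empty) is a \emph{proper} prefix of every element of $A'$. Write $2^{-\ell}=|r_{w,t_0}|$, so that $\ell<n$. Rescaling by $r_{w,t_0}$ identifies the elements of $\Lambda_n$ extending $w$ with $\Lambda_{n-\ell}$, and $|\Lambda_k|\le C\,2^{ks_0}$ where $s_0>0$ is the root of $\sum_i|r_{i,t_0}|^{s_0}=1$; hence $2^{\delta n/2}\le|A'|\le|\Lambda_{n-\ell}|\le C\,2^{(n-\ell)s_0}$ forces $n-\ell\ge c''n$ for some $c''=c''(\delta,\{\F_t\})>0$ and all large $n$. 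Since at least two distinct symbols follow $w$ among the words of $A'$ (otherwise $w$ would not be maximal), I may pick $I\ne I'\in A'$ with $I\wedge I'=w$; writing $I=wu,\ I'=wu'$, the words $u,u'$ are nonempty with distinct first symbols, and $|r_{u,t_0}|=|r_{I,t_0}|/|r_{w,t_0}|\le 2^{-(n-\ell)}\le 2^{-c''n}$, likewise for $u'$.

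\emph{Step 3: transversality — the hard part.} Put $h(t):=f_{u,t}(0)-f_{u',t}(0)$, so $f_{I,t}(0)-f_{I',t}(0)=r_{w,t}h(t)$; since $w$ is a proper prefix, $|r_{w,t_i}|\ge 2^{-n+o(1)}$, and Step~1 gives $|h(t_i)|\le 2^{-3nm}/|r_{w,t_i}|\le 2^{-(3m-1)n+o(n)}$ for $i=1,2$. Fix $\hat\jmath\in\Lambda$ and set $x=u\hat\jmath\hat\jmath\cdots$, $y=u'\hat\jmath\hat\jmath\cdots$ (so $d(x,y)=1$, since $u_1\ne u'_1$). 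From $\pi_t(u\hat\jmath^\infty)=f_{u,t}(0)+r_{u,t}\pi_t(\hat\jmath^\infty)$ one gets $\Delta_{x,y}(t)=h(t)+E(t)$ with $E(t):=(r_{u,t}-r_{u',t})\pi_t(\hat\jmath^\infty)$. Although $|E(t)|$ is only $\lesssim 2^{-c''n+o(n)}$ — too large for a direct comparison — the decisive point is that by \eqref{eq:definition-IFS-regularity:1} and the standard bound $|r_{u,t}'|\lesssim|u|\,|r_{u,t}|$ one also has $|E'(t)|\lesssim 2^{-c''n+o(n)}$, so that $|E(t_1)-E(t_2)|\lesssim 2^{-c''n}\,|t_1-t_2|\lesssim 2^{-(c''+m)n+o(n)}$; together with $|h(t_1)-h(t_2)|\le 2^{-(3m-1)n+o(n)}$ this yields
\[
|\Delta_{x,y}(t_1)-\Delta_{x,y}(t_2)|\ \le\ 2^{-(c''+m)n+o(n)}.
\]
On the other hand $|\Delta_{x,y}(t)|\le|h(t)|+|E(t)|\lesssim 2^{-c''n+o(n)}$ at $t_1$ and $t_2$, and hence — using the uniform bounds on $\Delta_{x,y}'$ and $\Delta_{x,y}''$ — also at the mean‑value point $\xi$ between $t_1$ and $t_2$; so for $n$ large the $\beta$‑transversality condition applies at $\xi$ (with $d(x,y)^\beta=1$), giving $|\Delta_{x,y}'(\xi)|\ge C_\beta$, whence $|\Delta_{x,y}(t_1)-\Delta_{x,y}(t_2)|=|\Delta_{x,y}'(\xi)|\,|t_1-t_2|\ge C_\beta 2^{-nm-m}$. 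Comparing the two estimates gives $C_\beta 2^{-nm-m}\le 2^{-(c''+m)n+o(n)}$, i.e. $c''n\le o(n)+O(1)$, impossible for $n$ large. I expect essentially all of the real work — and the only genuine difficulty — to be in this last step: reconciling the finite‑word difference $h(t)$ with the transversality‑controlled infinite‑word quantity $\Delta_{x,y}(t)$, verifying that the surviving discrepancy $E(t_1)-E(t_2)$ is genuinely smaller than $|t_1-t_2|$, and tracking all the $o(n)$ losses to confirm that the choice $m=\lfloor\beta+3\rfloor$ keeps every inequality on the correct side.
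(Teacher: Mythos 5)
Your proof is correct in its essentials and takes a genuinely different route from the paper's. The paper argues \emph{directly}: assuming $\tfrac1n H(\eta^{t_1},\D_{3nm})<\rho$, it partitions $A$ into the fibers $A_Q=\{I: f_{I,t_1}(0)\in Q\}$, $Q\in\D_{3nm}$, finds (by the chain rule and Markov) a collection $B$ of cells of total mass $\ge 1/M$ each carrying per‑cell entropy $\ge n/M$, and then proves the separation statement that \emph{any} two $I\ne J$ in the same $A_Q$ satisfy $|f_{I,t_2}(0)-f_{J,t_2}(0)|\ge 2^{-2nm}$; this turns the per‑cell entropy into actual $\D_{3nm}$-entropy at $t_2$, and convexity gives $\tfrac1n H(\eta^{t_2},\D_{3nm})\ge 1/M^2>\rho$. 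Your argument instead assumes \emph{both} entropies are $<\rho n$, intersects the two high‑mass cell families to locate a single fiber $A'$ of exponential cardinality lying in one $\D_{3nm}$‑cell at $t_1$ \emph{and} one at $t_2$, and then uses the counting estimate $|\Lambda_{n-\ell}|\le C2^{(n-\ell)s_0}$ to force the longest common prefix $w$ of $A'$ to be short, so that there is a branching pair $I=wu$, $I'=wu'$ with $d(u\hat\jmath^\infty,u'\hat\jmath^\infty)=1$; transversality applied to this pair (with $d^\beta=1$) then contradicts the closeness at both parameters. The two approaches hinge on the same mean‑value/transversality computation, but yours buys a cleaner invocation of the $\beta$‑transversality hypothesis: you only ever apply it with $d(x,y)=1$, so the conclusion is $|\Delta'|\ge C_\beta$ with no dependence on the common‑prefix length $\ell$, whereas the paper applies transversality to arbitrary pairs $I,J$ in a cell and must control $d(x_I,x_J)^\beta=2^{-\ell\beta}$ against $2^{-3nm}$. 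The price you pay is the antichain/longest‑common‑prefix counting step (Step 2), which the paper does not need, and the need to track the error term $E(t)=(r_{u,t}-r_{u',t})\pi_t(\hat\jmath^\infty)$ carefully (your observation that $|E(t_1)-E(t_2)|\lesssim 2^{-c''n}|t_1-t_2|$ is the crux and is correct). Two minor bookkeeping issues: in Step 1 the displayed inequality $\sum_{I\in A'}q_I\log(q(A')/q_I)\ge\delta n/2$ should be the \emph{normalized} per‑fiber entropy $\tfrac1{q(A')}\sum_{I\in A'}q_I\log(q(A')/q_I)$ in order to conclude $|A'|\ge 2^{cn}$ (the Markov/pigeonhole step yields the normalized bound, possibly at the cost of $q(A')$ being small); and, as you note, both your proof and the paper's implicitly use that one of $t_1,t_2$ is the parameter $t_0$ defining $\Lambda_n$ (so that $|r_{I,t_i}|\approx 2^{-n}$), which is true in the intended application but should really be part of the lemma's hypotheses.
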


Let us continue the proof of Lemma \ref{lemma:pf-thm-pertubate-increase-2:3}.  Recall that since $Q\in B_1$, by \eqref{eq:pf:thm-pertubate-increase-2:6},  we have
\[
\frac{1}{n}\sum_{I\in A_Q}\frac{p_I}{a_Q}\log \frac{a_Q}{p_I} \ge \delta_1.
\]
By \eqref{eq:pf:lemma-pf-thm-pertubate-increase-2:3:4} and the fact $|t_0-t|=2^{-nm\pm m}$,  it follows from Lemma \ref{lemma:pf-thm-pertubate-increase-2:4} that there exists $\rho_1>0$ depending only on $\delta_1$ and $\beta$ such that 
\begin{equation}\label{eq:pf:lemma-pf-thm-pertubate-increase-2:3:5}
\frac{1}{n}H\left(\frac{1}{a_{Q}}\sum_{I\in A_{Q}}p_I \delta_{f_{I,t}(0)},\D_{3nm}\right)  \ge \rho_1.
\end{equation}

Denote $R_t=\{r_{I,t} : I\in A_Q\}$.  Again,  the cardinality of $R_t$ is bounded by $2^{o(n)}$.   For each $r\in R_t$,  let $A_{Q,r}^t=\{I\in A_Q: r_{I,t} = r\}$.  Let 
\[
a_{Q,r}^t=\sum_{I\in A_{Q,r}^t}p_I \  \textrm{ and } \  \nu_{Q,r}^t=\frac{1}{a_{Q,r}^t} \sum_{I\in A_{Q,r}^t} p_If_{I,t}\mu_t.
\]
By Lemma \ref{lemma:convexity-entropy:1} (2), we have 
\[
H\left(\frac{1}{a_Q}\sum_{I\in A_Q}p_I \delta_{f_{I,t}(0)},  \D_{3nm}\right) \le \sum_{r\in R_t} \frac{a_{Q,r}^t}{a_Q}\log \frac{a_Q}{a_{Q,r}^t} +\sum_{r\in R_t}a_{Q,r}^t H\left(\frac{1}{a_{Q,r}^t}\sum_{I\in A_{Q,r}^t}p_I \delta_{f_{I,t}(0)},  \D_{3nm}\right)
\]
Since $\sum_{r\in R_t} \frac{a_{Q,r}^t}{a_Q}\log \frac{a_Q}{a_{Q,r}^t}\le \log |R_t|=o(n)$,  in view of \eqref{eq:pf:lemma-pf-thm-pertubate-increase-2:3:5}, we get
\[
\frac{1}{n}\sum_{r\in R_t}a_{Q,r}^t H\left(\frac{1}{a_{Q,r}^t}\sum_{I\in A_{Q,r}^t}p_I \delta_{f_{I,t}(0)},  \D_{3nm}\right) \ge \rho_1-o(1).
\]
From this,  we infer that there exist $\rho_2>0$, depending on $\rho_1$, and $R_t'\subset R_t$ such that $\sum_{r\in R_t' } a_{Q,r}^t\ge \rho_2$ and for each $r\in R_t'$,  we have
\begin{equation}\label{eq:pf:lemma-pf-thm-pertubate-increase-2:3:6}
\frac{1}{n}H\left(\frac{1}{a_{Q,r}^t}\sum_{I\in A_{Q,r}^t}p_I \delta_{f_{I,t}(0)},  \D_{3nm}\right) \ge \rho_2.
\end{equation}
Now we shall show that for each $r\in R_t'$,  we have
\begin{equation}\label{eq:pf:lemma-pf-thm-pertubate-increase-2:3:6.0}
H\left(\frac{1}{a_{Q,r}^t}\sum_{I\in A_{Q,r}^t}p_I f_{I,t}\mu_t,  \D_{3nm}\right) \ge  (3m-1)n(\dim\mu_t+\rho_2')
\end{equation}
where  $\rho_2'>0$ is an absolute constant only depending on $\rho_2$ and $l$.
Note that we have
\begin{equation}\label{eq:pf:lemma-pf-thm-pertubate-increase-2:3:6.1}
\frac{1}{a_{Q,r}^t}\sum_{I\in A_{Q,r}^t}p_I f_{I,t}\mu_t = \left(\frac{1}{a_{Q,r}^t}\sum_{I\in A_{Q,r}^t}p_I \delta_{f_{I,t}(0)}\right) * r\mu_t.
\end{equation}
Let us suppose, towards a contradiction,  that for some very small $\tau\ll 1$ we have
\begin{equation}\label{eq:pf:lemma-pf-thm-pertubate-increase-2:3:6.2}
H\left(\frac{1}{a_{Q,r}^t}\sum_{I\in A_{Q,r}^t}p_I f_{I,t}\mu_t,  \D_{3nm}\right) \le  (3m-1)n(\dim\mu_t+\tau).
\end{equation}
In the proof of Lemma \ref{lemma:pf-thm-pertubate-increase-2:2}, we have seen that 
$H(r\mu_t,\D_{3nm})\ge (3m-1)n(\dim\mu_{t_0}-o(1))$.  In view of this, together with \eqref{eq:pf:lemma-pf-thm-pertubate-increase-2:3:6},  \eqref{eq:pf:lemma-pf-thm-pertubate-increase-2:3:6.1} and \eqref{eq:pf:lemma-pf-thm-pertubate-increase-2:3:6.2}, we infer from Hochman's inverse theorem (Theorem \ref{theorem:Hochman-inverse-thm}) that there exists $\rho_3$, depending  only on $\rho_2$,  and $D\subset \{1,2, \ldots,3nm\}$ such that $|D|\ge 3nm\rho_3$ and for each $k\in D$, we have
\begin{equation}\label{eq:pf:lemma-pf-thm-pertubate-increase-2:3:7}
(r\mu_t)\left(x: \frac{1}{l}H\left((r\mu_t)^{\D_k(x)}, \D_l\right) \ge 1-o(1) \right) \ge 1-o(1).
\end{equation}
On the other hand,  recall that in the proof of Lemma \ref{lemma:pf-thm-pertubate-increase-2:2}, we have proved \eqref{eq:pf:lemma-pf-thm-pertubate-increase-2:2:8} from which it follows that  (noting that $|r|$ is about $2^{-n}$)
\begin{equation}\label{eq:pf:lemma-pf-thm-pertubate-increase-2:3:8}
\sum_{k=1}^{(3m-1)n}(r\mu_t)\left(x: \frac{1}{l}H\left((r\mu_t)^{\D_k(x)}, \D_l\right) \ge \dim\mu_{t_0}-o(1) \right) \ge (3m-1)n(1-o(1)).
\end{equation}
Combining \eqref{eq:pf:lemma-pf-thm-pertubate-increase-2:3:7} and \eqref{eq:pf:lemma-pf-thm-pertubate-increase-2:3:8},  we obtain
\[
H(r\mu_{t},\D_{3nm}) \ge (3m-1)n (\dim\mu_{t_0}+\rho_4)
\]
for some $\rho_4>0$ depending only on $\rho$ and $l$. This, together with \eqref{eq:pf:lemma-pf-thm-pertubate-increase-2:3:6.1},  implies that 
\[
H\left(\frac{1}{a_{Q,r}^t}\sum_{I\in A_{Q,r}^t}p_I f_{I,t}\mu_t ,\D_{3nm}\right) \ge H(r\mu_t,\D_{3nm})-O(1) \ge (3m-1)n(\dim\mu_{t_0}+\rho_4-o(n)).
\]
This is a contradiction to \eqref{eq:pf:lemma-pf-thm-pertubate-increase-2:3:6.2} if $\tau$ is too small in terms of $\rho_2$ and $l$.   Thus we must have \eqref{eq:pf:lemma-pf-thm-pertubate-increase-2:3:6.0}.

Now since $\nu_{Q}^t=\sum_{r\in R_t}\frac{a_{Q,r}^t}{a_Q}\nu_{Q,r}^t$ and $\sum_{r\in R_t}a_{Q,r}^t\ge \rho_2$,  therefore by \eqref{eq:pf:lemma-pf-thm-pertubate-increase-2:3:6.0} and Lemma \ref{lemma:convexity-entropy:1} (1),  we get
\[
H(\nu^t_Q,\D_{3nm}) \ge (3m-1)n(\dim\mu_t+\delta_2)
\]
for some $\delta_2$ depending on $\delta_1, \beta$ and $\mu_{t_0}$.  This finishes the proof of Lemma \ref{lemma:pf-thm-pertubate-increase-2:3}.
\end{proof}

\begin{proof}[Proof of Lemma \ref{lemma:pf-thm-pertubate-increase-2:4}]
Let us fix a small $\rho<\delta/2$, that we shall specify later.  We suppose 
\begin{equation}\label{eq:pf:lemma-pf-thm-pertubate-increase-2:4:2}
\frac{1}{n}H\left( \sum_{I\in A} q_I\delta_{f_{I,t_1}(0)} ,\D_{3nm}\right) <\rho.
\end{equation}
For each $Q\in \D_{3nm}$, let 
\[
A_Q=\{I\in A:f_{I,t_1}(0)\in Q\} \ \textrm{ and }  q_Q=\sum_{I\in A_Q}q_I.
\]
Then by Lemma \ref{lemma:almost-continuity-entropy:1} (3) we can write 
\begin{equation}\label{eq:pf:lemma-pf-thm-pertubate-increase-2:4:3}
\sum_{I\in A} q_I\log \frac{1}{q_I} =\sum_{Q\in\D_{3nm}} q_Q\log \frac{1}{q_Q} +\sum_{Q\in \D_{3nm}}q_Q\left( \sum_{I\in A_Q}\frac{q_I}{q_Q}\log \frac{q_Q}{q_I} \right). 
\end{equation}
Note that we have
\begin{equation}\label{eq:pf:lemma-pf-thm-pertubate-increase-2:4:4}
H\left( \sum_{I\in A} q_I\delta_{f_{I,t_1}(0)} ,\D_{3nm} \right) = \sum_{Q\in\D_{3nm}} q_Q\log \frac{1}{q_Q} .
\end{equation}
Recall that by our hypothesis,  $\sum_{I\in A} q_I\log \frac{1}{q_I}\ge \delta$.  Since $\rho<\delta/2$,  by \eqref{eq:pf:lemma-pf-thm-pertubate-increase-2:4:2}, \eqref{eq:pf:lemma-pf-thm-pertubate-increase-2:4:3} and \eqref{eq:pf:lemma-pf-thm-pertubate-increase-2:4:4},  we get
\[
\frac{1}{n} \sum_{Q\in \D_{3nm}}q_Q\left( \sum_{I\in A_Q}\frac{q_I}{q_Q}\log \frac{q_Q}{q_I} \right) \ge \delta/2.
\]
By Markov's inequality, we deduce that there exist $M=M(\delta,|\Lambda|)>0$ and $B\subset \D_{3nm}$ such that $\sum_{Q\in B}q_Q>1/M$ and for each $Q\in B$,  we have
\begin{equation}\label{eq:pf:lemma-pf-thm-pertubate-increase-2:4:5}
\frac{1}{n} \sum_{I\in A_Q}\frac{q_I}{q_Q}\log \frac{q_Q}{q_I} \ge 1/M.
\end{equation}

We now show that for any $Q\in \D_{3nm}$ and any $I,J\in A_Q$ with $I\neq J$,  we have
\begin{equation}\label{eq:pf:lemma-pf-thm-pertubate-increase-2:4:5.1}
|f_{I,t_2}(0)-f_{J,t_2}(0)|\ge 2^{-2nm}.
\end{equation}
This is consequence of the transversal property of $\{\F_{t}\}_t$.
Let us fix $Q\in \D_{3nm}$ and any $I,J\in A_Q$ with $I\neq J$.  By definition of $A_Q$,  we have
\[
|f_{I,t_1}(0)-f_{J,t_1}(0)|\le 2^{-3nm} < 2^{-\beta n}.
\]
Let us pick any $a\in \Lambda$ and let $x_I,x_J\in \Lambda^\N$ be defined as $x_I=Ia^\infty$ and $x_J=Ja^\infty$.  Then we have
\[
\Delta_{x_I,y_J}(t) = f_{x_I,t}(0)-f_{x_J,t}(0) = f_{I,t}(0)-f_{J,t}(0).
\]
Since $\{\F_t\}_t$ satisfies the $\beta$-transversality condition and $\left|\Delta_{x_I,y_J}(t_1)\right| < 2^{-n\beta}$,   we must have
\[
\left|\Delta_{x_I,y_J}'(t_1)\right| \ge 2^{-n\beta}.
\]
By the mean value theorem and the condition  $\left|\Delta_{x_I,y_J}''(t)\right| \le C$ (recall \eqref{eq:definition-IFS-regularity:1}),  we get for all $t$ with $|t-t_1|\le 2^{-nm+ m}$,
\[
\left|\Delta_{x_I,y_J}'(t)\right|  \ge \left|  \left|\Delta_{x_I,y_J}'(t_1)\right| -2^{-nm+ m}C \right|\ge 2^{-n\beta-2}.
\]
Recall that $m=\lfloor\beta+3\rfloor\ge \beta+2$.
By the mean value theorem again,  and the fact $|\Delta_{x_I,y_J}(t_1)|\le 2^{-3nm}$,  we get
\[
|\Delta_{x_I,y_J}(t_2)| \ge \left| |\Delta_{x_I,y_J}'(\tilde{t})||t_2-t_1| -  |\Delta_{x_I,y_J}(t_1)|  \right| \ge 2^{-2nm},
\]
where $\tilde{t}$ is some number between $t_1$ and $t_2$. Thus we have proved \eqref{eq:pf:lemma-pf-thm-pertubate-increase-2:4:5.1}.

Now by \eqref{eq:pf:lemma-pf-thm-pertubate-increase-2:4:5} and \eqref{eq:pf:lemma-pf-thm-pertubate-increase-2:4:5.1},  we have  
\begin{equation}\label{eq:pf:lemma-pf-thm-pertubate-increase-2:4:6}
H\left(\frac{1}{q_Q}\sum_{I\in A_Q}q_I\delta_{f_{I,t_2}(0)},\D_{3nm}\right)=
\sum_{I\in A_Q}\frac{q_I}{q_Q}\log \frac{q_Q}{q_I} \ge \frac{n}{M}.
\end{equation}
Note that we have
\[
\sum_{I\in A}q_I\delta_{f_{I,t_2}(0)} = \sum_{Q\in \D_{3nm}} q_Q\left(\frac{1}{q_Q}\sum_{I\in A_Q}q_I\delta_{f_{I,t_2}(0)}\right).
\]
Since $\sum_{Q\in B}q_Q\ge 1/M$ and for each $Q\in B$ we have \eqref{eq:pf:lemma-pf-thm-pertubate-increase-2:4:6},  by Lemma \ref{lemma:convexity-entropy:1} (1),  we get
\[
\frac{1}{n}H\left(\sum_{I\in A}q_I\delta_{f_{I,t_2}(0)},\D_{3nm}\right) \ge \rho
\]
provided $\rho$ is assumed small enough in terms of $M$.

\end{proof}

\section{Projections of CP-distributions}\label{section: proj-CP-distributions:1}


In this section, we present the proof of Theorem \ref{thm:proj-thm-cp-distribution:-2}. The proof strategy is similar to that of Theorem \ref{thm:main:pertubate-increase:1}, but it is technically more complicated due to the lack of convolution structures when examining the local scenery of the projected measures.

We first recall some basic properties concerning the dimensions of CP-distribution measures and their projections in Section \ref{subsection: properties-of-CP-dist:1}. The main ingredient in the proof of Theorem \ref{thm:proj-thm-cp-distribution:-2} is an entropy-increasing result from additive combinatorics. This result follows from Hochman's inverse theorem and the Balog-Szemerédi-Gowers theorem, with details provided in Section \ref{subsection: entropy-increasing-theorem:1}.
The final proof of Theorem \ref{thm:proj-thm-cp-distribution:-2} is presented in Section \ref{subsection: proof-of-proj-theorem-cp-dist:1}, following the preparations in Sections \ref{subsection: properties-of-CP-dist:1} and \ref{subsection: entropy-increasing-theorem:1}.

\subsection{Properties  of CP-distributions}\label{subsection: properties-of-CP-dist:1}

The CP-distribution theory has its gem in pioneering work of Furstenberg \cite{Furstenberg69}, initially as a tool to investigate intersections of Cantor sets.  Recently, a more systematic study of CP-distributions was initiated by Furstenberg \cite{Furstenber2008}, with further developments by Gavish \cite{Gavish}, Hochman \cite{Hochman2010-fractal-distributions}, Hochman and Shmerkin \cite{HS2012} and others.  Let us first recall some basic concepts related to this theory.

Recall that $\cP(X)$ denotes the space of Borel probability measures on a space  $X$.  For $D\in \D_n(\R^2)$,  $S_{D}$ is the unique orientation-preserving homothety sending $D$ to $[0,1)^2$.
\begin{definition}
The magnification operator $M :\cP([0,1]^2)\times [0,1]^2 \to \cP([0,1]^2)\times [0,1]^2$ is defined as  follows
\[
M(\mu, x) = (\mu^{\D_1(x)}, S_{\D_1(x)}(x)).
\]
\end{definition}
The operator $M$ is defined on pairs $(\mu,x)$ with $\mu(\D_1(x))>0$.    
\begin{definition}
A distribution $Q$ on $\cP([0,1]^2)\times [0,1]^2$
 is called adapted if for every $f \in C(\cP([0,1]^2)\times [0,1]^2)$,
\[
\int f(\mu, x)d Q(\mu, x) = \int \left(\int f(\mu, x)d\mu(x)\right) d Q(\mu).
\]
\end{definition}
Note that in the above, we used $d Q(\mu)$ instead of $dQ(\mu, x)$  since $x$   is not involved.  We shall often implicitly identify $Q$ with its marginal.

In other words, $Q$ is adapted if, conditioned on the measure component being $\mu$, the
point component $x$ is distributed according to $\mu$. In particular, if a property holds for
$Q$-a.e. $(\mu, x)$ and $Q$ is adapted, then this property holds for $Q$-a.e. $\mu$ and $\mu$-a.e. $x$.

\begin{definition}
A distribution $Q$ on $\cP([0,1]^2)\times [0,1]^2$ is a CP-distribution if it is
$M$-invariant and adapted.
\end{definition}
A CP-distribution $Q$ is ergodic if the measure preserving system $(\cP([0,1]^2)\times [0,1]^2, Q,M)$ is ergodic in
the usual sense. If it is not ergodic, then we can consider its ergodic decomposition.

We shall use the following properties about CP-distributions.

\begin{proposition}\label{prop:dim-projection-cp-distribution:1}
Let $Q$ be an ergodic CP-distribution on $\R^2$ and $\pi\in G(2,1)$.  Let $\alpha=\dim Q$ and $\beta=\dim \pi Q$. Then for $Q$-a.e.  $\mu$,  the measures $\mu$ and $\pi \mu$ are exact dimensional with $\dim \mu=\alpha$ and $\dim \pi \mu=\beta$.   In particular, for any $\epsilon>0$,  there is  $A$ such that $Q(A)>1-\epsilon$,  and for any $ \mu\in A$,  there exist $D$ and $r_0$ with $\mu(D)>1-\epsilon$ and 
\begin{eqnarray}
 \mu(B(x,r)) & = & r^{\alpha\pm \epsilon} \  \textrm{ for } x\in D,  r\le r_0,  \label{eq:prop:dim-projection-cp-distribution:1:1}\\
\pi\mu(B(\pi(x),r)) & = & r^{\beta\pm \epsilon} \  \textrm{ for } x\in D,  r\le r_0.\label{eq:prop:dim-projection-cp-distribution:1:2}
\end{eqnarray}
Moreover, for any $\epsilon>0$, there exists $l_1\in \N$ such that for all $l\ge l_1$, there is $A$ such that $Q(A)>1-\epsilon$ and for every $\mu\in A$, we have
\begin{eqnarray}
\mu\left( x: \frac{1}{l}H\left(\mu^{\D_l(x)},\D_l\right)  =  \alpha\pm \epsilon \right) &>& 1-\epsilon, \label{eq:prop:dim-projection-cp-distribution:1:3}\\
\mu\left( x: \frac{1}{l}H\left(\pi\left(\mu^{\D_l(x)}\right),\D_l\right) =  \beta\pm \epsilon \right) &>& 1-\epsilon.\label{eq:prop:dim-projection-cp-distribution:1:4}
\end{eqnarray}
\end{proposition}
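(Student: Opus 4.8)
The plan is to deduce all four assertions from Birkhoff's ergodic theorem, applied to the ergodic measure preserving system $(\cP([0,1]^2)\times[0,1]^2,Q,M)$ and to its one‑dimensional analogue for $\pi\mu$, and then to upgrade the resulting almost‑everywhere statements to the uniform ones by Egorov's theorem. I will use two standard facts throughout. First, a dyadic square of side $2^{-n}$ is mapped by $\pi$ onto an interval of length between $2^{-n}$ and $\sqrt2\,2^{-n}$, hence meeting only $O(1)$ members of $\D_n(\R)$; consequently, passing between $\pi(\mu^{\D_n(x)})$ and a genuine rescaled dyadic component of $\pi\mu$ distorts entropies by $O(1)$ and cell counts by a factor $O(1)$, and I use this silently. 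Second, by the theory of projections of CP‑distributions developed by Furstenberg, Hochman and Shmerkin (see \cite{Furstenberg69,Hochman2010-fractal-distributions,HS2012}), the distribution of the rescaled dyadic magnifications of $\pi\mu$ is an ergodic CP‑distribution on $\R$, namely $\pi Q$, so that $\beta=\dim\pi Q$ by definition.

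For exact dimensionality, set $g(\mu,x)=-\log\mu(\D_1(x))$. Since $Q$ is adapted, $\int g\,dQ=\int H(\mu,\D_1)\,dQ(\mu)=\alpha$, the last identity being the definition of $\dim Q$, and in particular $g\in L^1(Q)$. Telescoping along the $M$‑orbit gives $-\log\mu(\D_n(x))=\sum_{k=0}^{n-1}g(M^k(\mu,x))$, so Birkhoff's theorem yields $-\tfrac1n\log\mu(\D_n(x))\to\alpha$ for $Q$‑a.e. $(\mu,x)$; adaptedness turns this into the statement that the dyadic local dimension of $\mu$ equals $\alpha$ at $\mu$‑a.e. $x$ for $Q$‑a.e. $\mu$, whence $\mu$ is exact dimensional with $\dim\mu=\alpha$ after the usual comparison of dyadic and metric local dimensions. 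Repeating the argument in $(\cP([0,1])\times[0,1],\pi Q,M)$, together with the $O(1)$ comparison above, gives that $\pi\mu$ is exact dimensional with $\dim\pi\mu=\beta$ for $Q$‑a.e. $\mu$. The pointwise estimates \eqref{eq:prop:dim-projection-cp-distribution:1:1}--\eqref{eq:prop:dim-projection-cp-distribution:1:2} now follow: given $\epsilon>0$, let $A$ be any Borel subset of this full‑measure set with $Q(A)>1-\epsilon$; for $\mu\in A$, applying Egorov's theorem to the measure $\mu$ produces $D$ with $\mu(D)>1-\epsilon$ on which $\frac{\log\mu(B(x,r))}{\log r}\to\alpha$ and $\frac{\log\pi\mu(B(\pi x,r))}{\log r}\to\beta$ uniformly, and any small enough $r_0=r_0(\mu)$ then realizes the claimed $\epsilon$‑closeness.

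The entropy estimates \eqref{eq:prop:dim-projection-cp-distribution:1:3}--\eqref{eq:prop:dim-projection-cp-distribution:1:4} require more work; the goal is to show that for $Q$‑a.e. $\mu$ one has $\tfrac1l H(\mu^{\D_l(x)},\D_l)\to\alpha$ in $\mu$‑probability as $l\to\infty$ (and similarly for $\pi$). For the lower bound I use exact dimensionality: for $\mu$‑a.e. $w$ and all large $l$, $\mu(\D_{2l}(w))/\mu(\D_l(w))=2^{-l(\alpha\pm\epsilon)}$, say for $w$ in a set $G_l$ with $\mu(G_l)\to1$. Since $x\mapsto\mu_{\D_l(x)}(G_l)$ depends only on $\D_l(x)$, we have $\int\mu_{\D_l(x)}(G_l)\,d\mu(x)=\mu(G_l)\to1$, so for $\mu$‑most $x$ the conditional measure $\mu_{\D_l(x)}$ gives $G_l$ mass at least $1-\epsilon$; for such $x$, writing $\nu=\mu^{\D_l(x)}$ and using $\nu(\D_l(S_{\D_l(x)}w))=\mu(\D_{2l}(w))/\mu(\D_l(w))$ for $w\in\D_l(x)$ together with $H(\nu,\D_l)=\int-\log\nu(\D_l(y))\,d\nu(y)$, one gets $H(\nu,\D_l)\ge(1-\epsilon)l(\alpha-\epsilon)$. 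For the matching upper bound I pass to averages: the dyadic chain rule gives $\int\tfrac1l H(\mu^{\D_l(x)},\D_l)\,d\mu(x)=\tfrac1l\bigl(H(\mu,\D_{2l})-H(\mu,\D_l)\bigr)=2\cdot\tfrac1{2l}H(\mu,\D_{2l})-\tfrac1l H(\mu,\D_l)\to\alpha$ as $l\to\infty$, using that the entropy dimension of $\mu$ equals $\dim\mu=\alpha$ (itself a consequence of the Birkhoff convergence above and dominated convergence); combining this with the a.e. lower bound and Markov's inequality forces $\tfrac1l H(\mu^{\D_l(x)},\D_l)\le\alpha+o_\epsilon(1)$ for $\mu$‑most $x$ as well. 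Thus \eqref{eq:prop:dim-projection-cp-distribution:1:3} holds for each $\mu\in A$ once $l\ge l_1(\mu,\epsilon)$, with a proportion $1-\epsilon$ of good $x$, and \eqref{eq:prop:dim-projection-cp-distribution:1:4} follows identically from $\pi Q$ and the $O(1)$ comparison. Since $\mu\mapsto l_1(\mu,\epsilon)$ is finite $Q$‑a.e., the set $A_l$ of those $\mu$ for which both estimates hold satisfies $Q(A_l)\to1$, so $Q(A_l)>1-\epsilon$ for all $l\ge l_1$, which is exactly the quantifier structure asserted.

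I expect the main obstacle to be this last step: converting the Birkhoff average $\tfrac1n\sum_{k<n}H(\mu^{\D_k(x)},\D_1)\to\alpha$ into the single deep‑cell statement $\tfrac1l H(\mu^{\D_l(x)},\D_l)\approx\alpha$ holding for \emph{$\mu$‑most $x$} rather than merely on $\mu$‑average. This is where exact dimensionality (via a Shannon--McMillan--Breiman type lower bound) is genuinely used, not just the existence of the entropy dimension, and where the conditional measures $\mu_{\D_l(x)}$ must be handled with care; the projection bookkeeping — aligning the $\pi$‑image of a dyadic square with dyadic intervals, and invoking that $\pi Q$ is again a CP‑distribution — is routine but not entirely free.
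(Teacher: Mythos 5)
Your handling of the first part (exact dimensionality, which you reprove via Birkhoff rather than citing \cite[Theorem 1.22]{Hochman2010-fractal-distributions} as the paper does) and of \eqref{eq:prop:dim-projection-cp-distribution:1:1}--\eqref{eq:prop:dim-projection-cp-distribution:1:2} is fine. Your proof of \eqref{eq:prop:dim-projection-cp-distribution:1:3} is also correct and genuinely different from the paper's: you deduce the deep-cell entropy estimate from exact dimensionality of $\mu$ alone — a Shannon--McMillan type lower bound for the renormalized conditional measure, combined with the chain rule identity $\int H(\mu^{\D_l(x)},\D_l)\,d\mu(x)=H(\mu,\D_{2l})-H(\mu,\D_l)$ for the matching upper bound — whereas the paper gets both bounds in one stroke from $M$-invariance and adaptedness: it observes that $Q(M^{-l}(F_{l,\epsilon}\times[0,1]^2))=Q(F_{l,\epsilon})$ and unwinds the left side by adaptedness into $\int\mu\bigl(x:\mu^{\D_l(x)}\in F_{l,\epsilon}\bigr)\,dQ(\mu)$. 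Your route needs nothing but exact dimensionality; the paper's route is shorter and exploits the CP-structure.

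However, for \eqref{eq:prop:dim-projection-cp-distribution:1:4} there is a genuine gap, and it sits precisely where you wave it away as "routine but not entirely free." You claim it "follows identically from $\pi Q$ and the $O(1)$ comparison," but neither ingredient is sound. First, the distribution of the rescaled dyadic magnifications of $\pi\mu$ is \emph{not} a dyadic CP-distribution: dyadic magnification of $\pi\mu$ in $\R$ does not commute with $\pi$ applied to dyadic magnification of $\mu$ in $\R^2$, because a dyadic interval $\D_l(\pi x)$ pulls back under $\pi$ to a strip that meets an entire column of squares, not just $\D_l(x)$. Second, for the same reason, $\pi(\mu^{\D_l(x)})$ (projection of one square's component) and $(\pi\mu)^{\D_l(\pi x)}$ (a dyadic component of the full projection) are measures of a genuinely different nature; their entropies can differ by $\Theta(l)$, not $O(1)$. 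Concretely, the chain-rule average that worked for \eqref{eq:prop:dim-projection-cp-distribution:1:3} becomes $\int\tfrac1l H\bigl(\pi(\mu^{\D_l(x)}),\D_l\bigr)\,d\mu(x)=\tfrac1l\bigl(H(\mu,\D_l(\R^2)\vee\pi^{-1}\D_{2l}(\R))-H(\mu,\D_l(\R^2))\bigr)+O(1/l)$, and for a general exact-dimensional $\mu$ with exact-dimensional $\pi\mu$ this expression need not converge to $\beta$; pinning it at $\beta$ amounts to Furstenberg's dimension conservation, a property of CP-distribution measures that your argument never invokes. The paper's proof avoids all of this precisely because the set $F_{l,\epsilon}^\pi$ lives in $\cP([0,1]^2)$ and the magnification $M$ acts in $\R^2$ \emph{before} $\pi$ is applied, so $Q(M^{-l}(F_{l,\epsilon}^\pi\times[0,1]^2))=Q(F_{l,\epsilon}^\pi)$ converts exact dimensionality of $\pi\mu$ directly into the statement that $\mu^{\D_l(x)}\in F_{l,\epsilon}^\pi$ for $Q$-most $\mu$ and $\mu$-most $x$. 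This step cannot be replaced by an $O(1)$ bookkeeping argument; it is exactly where the $M$-invariance of $Q$ is essential, and you would need to either reinstate it or prove dimension conservation for slices to close the gap.
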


\begin{proof}
By \cite[Theorem 1.22]{Hochman2010-fractal-distributions}, we know that for $Q$-a.e. $\mu$,  both $\mu$ and $\pi\mu$ are exact dimensional with $\dim \mu=\dim Q=\alpha$ and $\dim \pi\mu=\dim \pi Q=\beta$.   By Egorov's theorem, we have \eqref{eq:prop:dim-projection-cp-distribution:1:1} and \eqref{eq:prop:dim-projection-cp-distribution:1:2}. 

We now proceed to the proof of \eqref{eq:prop:dim-projection-cp-distribution:1:3} and \eqref{eq:prop:dim-projection-cp-distribution:1:4}.   For $\epsilon>0$ and $l\ge 1$, let 
\begin{eqnarray*}
& & F_{l,\epsilon} = \left\{ \mu\in \cP([0,1]^2): \frac{1}{l}H(\mu,\D_l(\R^2)) = \alpha\pm \epsilon  \right\},\\
& & F_{l,\epsilon}^\pi = \left\{ \mu\in \cP([0,1]^2): \frac{1}{l}H(\pi\mu,\D_l(\R)) = \beta\pm \epsilon  \right\}.
\end{eqnarray*}
Since for $Q$-a.e. $\mu$, both $\mu$ and $\pi\mu$ are exact dimensional with $\dim \mu=\alpha$ and $\dim \pi\mu=\beta$,  it follows that for $l$ large enough we have  
\[
Q(F_{l,\epsilon})>1-o_{l}(1) \  \textrm{ and }  \  Q(F_{l,\epsilon}^\pi)>1-o_{l}(1).
\]
Recall that $M$ is the magnification operator defined on $\cP([0,1]^2)\times [0,1]^2$.  By adaptedness of $Q$, we have
\begin{eqnarray}
& & Q\left(M^{-l}\left(F_{l,\epsilon}\times [0,1]^2\right)\right) = \int \mu\left(x: \mu^{\D_l(x)}\in F_{l,\epsilon}\right) d Q(\mu),  \label{eq:pf:prop:dim-projection-cp-distribution:1:1}\\
& & Q\left(M^{-l}\left(F_{l,\epsilon}^\pi\times [0,1]^2\right)\right) = \int \mu\left(x: \mu^{\D_l(x)}\in F_{l,\epsilon}^\pi\right) d Q(\mu).\label{eq:pf:prop:dim-projection-cp-distribution:1:2}
\end{eqnarray}
Since $Q$ is $M$-invariant, we have 
\begin{eqnarray}
& & Q\left(M^{-l}\left(F_{l,\epsilon}\times [0,1]^2\right)\right) =Q\left(F_{l,\epsilon}\times [0,1]^2\right)=Q\left(F_{l,\epsilon}\right)>1-o_l(1), \label{eq:pf:prop:dim-projection-cp-distribution:1:3}\\
& & Q\left(M^{-l}\left(F_{l,\epsilon}^\pi\times [0,1]^2\right)\right) = Q\left(F_{l,\epsilon}^\pi\times [0,1]^2\right)=Q\left(F_{l,\epsilon}^\pi\right)>1-o_l(1).\label{eq:pf:prop:dim-projection-cp-distribution:1:4}
\end{eqnarray}
Combining \eqref{eq:pf:prop:dim-projection-cp-distribution:1:1} and \eqref{eq:pf:prop:dim-projection-cp-distribution:1:3}, and by Markov's inequality,  we infer that there exists $F\subset \cP([0,1]^2)$ such that $Q(F)>1-o_{l}(1)$ and for each $\mu\in F$,  we have   $\mu\left(x: \mu^{\D_l(x)}\in F_{l,\epsilon}\right) > 1-o_{l}(1)$.  In particular,  for each $\mu\in F$,
\[
\mu\left( x: \frac{1}{l}H\left(\mu^{\D_l(x)},\D_l\right)  =  \alpha\pm \epsilon \right) > 1-o_l(1). 
\]
Assuming $l$ is large enough so that $o_l(1)<\epsilon$, we obtain \eqref{eq:prop:dim-projection-cp-distribution:1:3}.  By a similar argument,   using 
\eqref{eq:pf:prop:dim-projection-cp-distribution:1:2} and \eqref{eq:pf:prop:dim-projection-cp-distribution:1:4},  we obtain \eqref{eq:prop:dim-projection-cp-distribution:1:4}. 

\end{proof}

\begin{definition}\label{definition:spreading measures}
A measure $\eta\in \cP(\R)$ is $(n,l,\epsilon)$-dyadic spreading if  there exists $t\in \R$ such that the measure $\tilde{\eta}=\eta+t$ satisfies
\[
\tilde{\eta}\left(x: \frac{1}{n}| \{1\le k\le n: \tilde{\eta}(\D_k(x))\le 2 \tilde{\eta}(\D_{k+l}(x))\} |<\epsilon\right) >1-\epsilon.
\]
\end{definition}

Recall that if   $\mu\in \cP(\R^2)$ and $\pi\in G(2,1)$, then for $\pi\mu$-a.e.  $x$,  $\mu_{\pi^{-1}(x)}$ denotes the conditional measure of $\mu$ on the fiber $\pi^{-1}(x)$.  See Section \ref{section:notation},    \eqref{eq:notation-conditional-meas:1}.  
We shall use the following properties about conditional measures of CP-distribution measures.

\begin{proposition}\label{prop:slice-cp-distribution:1}
Let $Q$ be an ergodic  CP-distribution on $\R^2$ and $\pi\in G(2,1)$.  Suppose $\dim \pi Q = \dim Q-\kappa$ for some $\kappa>0$. Then for $Q$-a.e.  $\mu$ and $\mu$-a.e.  $x$, the conditional measure $\mu_{\pi^{-1}(\pi(x))}$ is exact dimensional and $\dim \mu_{\pi^{-1}(\pi(x))}=\kappa$.  Moreover,  for any $\epsilon>0$, there exists $A$ with $Q(A)>1-\epsilon$ and $n_0,l_0\in \N$ such that for any $n\ge n_0$, for any $\mu\in A$, there exists $D$  with $\mu(D)>1-\epsilon$ and for each $x\in D$, the measure $\mu_{\pi^{-1}(\pi(x))}$ (viewed as a measure on $\R$) is $(n,l_0,\epsilon)$-dyadic spreading.
\end{proposition}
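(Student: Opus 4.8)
The plan is to realize the conditional measures $\mu_{\pi^{-1}(\pi(x))}$ as the typical measures of an auxiliary CP-distribution $\bar Q$ on $\R$, and then to extract the dyadic-spreading property from the $M$-invariance (the stationarity) of $\bar Q$ via Birkhoff's pointwise ergodic theorem. The point to keep in mind is that exact dimensionality of a single slice does \emph{not} by itself yield dyadic spreading — a positive-dimensional measure can confine its ``spreading'' to a sparse set of scales — and it is precisely the stationarity of $\bar Q$ that rules this out; producing $\bar Q$ is the crux of the argument, everything afterwards being soft.

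\emph{Step 1: the conditional CP-distribution, and exact dimensionality of slices.} Since $\dim\pi Q=\dim Q-\kappa$, I would invoke the conditional-measure (slicing) theory for CP-distributions — originating in Furstenberg's work on intersections of Cantor sets \cite{Furstenberg69} and developed in \cite{HS2012, Hochman2010-fractal-distributions} — to produce an ergodic CP-distribution $\bar Q$ on $\R$, namely the distribution of the recentered conditional measures $\mu_{\pi^{-1}(\pi(x))}$ (with the fiber identified with $\R$ and translated so that $x$ sits at the origin), with $\dim\bar Q=\kappa$, together with a measurable map $\Phi\colon(\mu,x)\mapsto\bigl(\text{recentered }\mu_{\pi^{-1}(\pi(x))},\ \text{recentered }x\bigr)$ satisfying $\Phi_\ast Q=\bar Q$. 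By standard properties of ergodic CP-distributions (the one-dimensional counterpart of what is recalled for $\R^2$ in Proposition~\ref{prop:dim-projection-cp-distribution:1}, cf.\ \cite{Hochman2010-fractal-distributions}), $\bar Q$-a.e.\ $\rho$ is exact dimensional with dimension $\kappa$; pulling this back through $\Phi$ and using adaptedness of $Q$ gives that for $Q$-a.e.\ $\mu$ and $\mu$-a.e.\ $x$ the slice $\mu_{\pi^{-1}(\pi(x))}$ is exact dimensional with dimension $\kappa$ — this is the first assertion, and is a form of Furstenberg's dimension conservation. (If $\bar Q$ fails to be ergodic, replace it by its ergodic components, each of dimension $\kappa$ by the ergodic decomposition of dimension, and run the remaining argument componentwise.)

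\emph{Step 2: few bad scales, uniformly.} Fix $\epsilon>0$ and, on $\cP([0,1])\times[0,1]$, set $g_l(\rho,w)=\mathbf{1}\{\rho(\D_l(w))\ge 1/2\}$. Using $S_{\D_k(w)}(\D_{k+l}(w))=\D_l\bigl(S_{\D_k(w)}(w)\bigr)$, a short computation with the (one-dimensional) magnification operator $M$ shows, for $k\ge 0$,
\[
g_l\bigl(M^k(\rho,w)\bigr)=\mathbf{1}\bigl\{\rho(\D_k(w))\le 2\,\rho(\D_{k+l}(w))\bigr\}.
\]
Birkhoff's ergodic theorem applied to $(\cP([0,1])\times[0,1],\bar Q,M)$ then gives, for $\bar Q$-a.e.\ $(\rho,w)$,
\[
\frac1n\bigl|\{\,1\le k\le n:\ \rho(\D_k(w))\le 2\,\rho(\D_{k+l}(w))\,\}\bigr|\ \longrightarrow\ c_l:=\int g_l\,d\bar Q\qquad(n\to\infty).
\]
Since $\dim\bar Q=\kappa>0$, $\bar Q$-a.e.\ $\rho$ is non-atomic, so $\max_{D\in\D_l(\R)}\rho(D)\to 0$ as $l\to\infty$, hence $\rho\bigl(\{w:\rho(\D_l(w))\ge 1/2\}\bigr)\to 0$; by adaptedness of $\bar Q$ and dominated convergence, $c_l=\int\rho\bigl(\{w:\rho(\D_l(w))\ge 1/2\}\bigr)\,d\bar Q(\rho)\to 0$. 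Fix $l_0$ with $c_{l_0}<\epsilon/2$. By Egorov's theorem there are $G\subset\cP([0,1])\times[0,1]$ with $\bar Q(G)>1-\epsilon^4$ and $n_0\in\N$ such that for all $(\rho,w)\in G$ and all $n\ge n_0$,
\[
\frac1n\bigl|\{\,1\le k\le n:\ \rho(\D_k(w))\le 2\,\rho(\D_{k+l_0}(w))\,\}\bigr|<\epsilon.
\]

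\emph{Step 3: transfer to $Q$.} Put $\hat G=\Phi^{-1}(G)$, so $Q(\hat G)=\bar Q(G)>1-\epsilon^4$, and write $\hat G_\mu=\{x:(\mu,x)\in\hat G\}$. By adaptedness, $\int\mu(\hat G_\mu)\,dQ(\mu)>1-\epsilon^4$, so by Markov's inequality there is $A$ with $Q(A)>1-\epsilon$ and $\mu(\hat G_\mu)>1-\epsilon^2$ for all $\mu\in A$. Fix $\mu\in A$; since $\mu(\hat G_\mu)=\int\mu_{\pi^{-1}(y)}(\hat G_\mu)\,d\pi\mu(y)>1-\epsilon^2$ (the disintegration \eqref{eq:notation-conditional-meas:1}), a second use of Markov's inequality produces a set $Y$ of $y$'s with $\pi\mu(Y)>1-\epsilon$ and $\mu_{\pi^{-1}(y)}(\hat G_\mu)>1-\epsilon$ for $y\in Y$; let $D=\pi^{-1}(Y)$, so $\mu(D)=\pi\mu(Y)>1-\epsilon$. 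For $x\in D$ put $\eta=\mu_{\pi^{-1}(\pi(x))}$ and let $t$ be the translation recentering this fiber at $x$ as in Step~1, $\tilde\eta=\eta+t$. Every $z\in\hat G_\mu$ lying on this fiber satisfies $\Phi(\mu,z)\in G$, which by Step~2 is exactly the assertion that for all $n\ge n_0$ the spreading average $\tfrac1n\bigl|\{1\le k\le n:\tilde\eta(\D_k(z))\le 2\,\tilde\eta(\D_{k+l_0}(z))\}\bigr|$ is $<\epsilon$; as the set of such $z$ has $\tilde\eta$-measure $\ge\eta(\hat G_\mu)>1-\epsilon$, the measure $\mu_{\pi^{-1}(\pi(x))}$ is $(n,l_0,\epsilon)$-dyadic spreading for every $n\ge n_0$, as required. (One routine technical point: the restriction of the dyadic partition of $\R^2$ to a line meeting the axes at a generic angle is not the dyadic partition of the line, only comparable to it up to bounded overlap, and the recentered points need not be dyadically aligned; this is absorbed, as usual, by the translation freedom in the definition of dyadic spreading together with standard bounded-overlap and boundary-avoidance arguments — compare Lemma~\ref{lemma:almost-continuity-entropy:1}(1) — at the cost of enlarging $l_0$ by an absolute constant.) As indicated above, the main obstacle is Step~1, the construction of the stationary ergodic model $\bar Q$ with $\dim\bar Q=\kappa$; granted that, Steps~2--3 are a routine combination of Birkhoff's theorem, Egorov's theorem and the adaptedness of $Q$.
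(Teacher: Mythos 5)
Your high‑level plan — realize the slices as the typical measures of an auxiliary stationary model, then use ergodicity to bound the density of bad scales — is exactly the right idea, and your Steps 2–3 are clean (the computation $g_l(M^k(\rho,w))=\mathbf{1}\{\rho(\D_k(w))\le 2\rho(\D_{k+l}(w))\}$, the $c_l\to 0$ argument by non‑atomicity, the Birkhoff/Egorov/Markov chain). The first assertion of the proposition is handled the same way as in the paper, via Hochman's slicing theory for fractal distributions.

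The gap is in Step 1, and it is not the "routine technical point" you flag at the end but the central obstacle. You need $\bar Q$ to be a CP‑distribution on $\R$ \emph{with respect to the one‑dimensional dyadic partition}, i.e.\ invariant and adapted under the dyadic magnification $M$, and simultaneously you want $\Phi_\ast Q=\bar Q$ for a recentering slice map $\Phi$. For a generic $\pi\in G(2,1)$ these two demands are incompatible: the intersections $\pi^{-1}(\pi(x))\cap\D_k(\R^2)$ are \emph{not} dyadic intervals on the line (they are intervals of length comparable to $2^{-k}$ but with endpoints that drift irrationally relative to any fixed dyadic grid on $\R$), so the slicing operation does not intertwine the 2‑D dyadic magnification with the 1‑D one. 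Consequently $\Phi_\ast Q$ has no reason to be $M$‑invariant, and Step 2 (Birkhoff for $(\cP([0,1])\times[0,1],\bar Q,M)$) and the identity $Q(\Phi^{-1}(G))=\bar Q(G)$ in Step 3 both rest on an object you have not produced. The slicing theory in \cite{Hochman2010-fractal-distributions} (Theorem 1.14 and Proposition 1.28), which the paper invokes, produces an ergodic \emph{fractal distribution} $P'$ on $\R$ — a continuous‑time, translation‑ and scale‑invariant object — precisely because the $S^*$‑flow is not tied to a fixed dyadic grid, and slicing is a factor map there.

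The paper's route after Step 1 is therefore different from yours: it stays in the continuous‑time framework to obtain, for $\eta=\mu_{\pi^{-1}(\pi(x))}$ and $\eta$‑a.e.\ $x$, a scale‑averaged porosity bound $\limsup_{T\to\infty}\frac1T\int_0^T\sup_{|I|\le\rho}\eta^{x,t}(I)\,dt<2\epsilon$ (this is the continuous‑time Birkhoff you were reaching for, applied through genericity for $P'$), and then transfers this to the dyadic scale sequence via a generic translation: for Lebesgue‑a.e.\ $h$, $\eta_h:=\eta+h$ has $\eta_h$‑a.e.\ point $2$‑normal, which controls how often $B(x,2^{-k-l})\subsetneq\D_k(x)$, and from there one deduces the dyadic spreading estimate. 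The "translation freedom" in the definition of dyadic spreading is not an afterthought used to absorb a bounded‑overlap error; it is the mechanism that lets one pass from the continuous‑time, grid‑free spreading property to a dyadic statement. If you replace your Step 1 by the fractal‑distribution slicing plus this $2$‑normality transfer, your argument becomes essentially the paper's proof.
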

\begin{proof}
The first part of the proposition follows from Theorem 1.14 and Proposition 1.28 in \cite{Hochman2010-fractal-distributions}. Below, we provide a brief explanation of the key arguments. For detailed notation and definitions, we refer the reader to \cite{Hochman2010-fractal-distributions}.  By \cite[Theorem 1.14]{Hochman2010-fractal-distributions}, the continuous centering of $Q$, denoted P, is a ergodic fractal distribution. By \cite[Proposition 1.28]{Hochman2010-fractal-distributions}, the push-forward $P'$ of $P$ by the map $\mu\mapsto \left( \mu_{\pi^{-1}(0)} \right)^*$ is an ergodic fractal distribution, and the given map is a factor map between $(P,S^*)$ and $(P',S^*)$.   It follows that for $P$-a.e.  $\mu$ the conditional measure $\mu_{\pi^{-1}(0)}$ is well defined and $\mu_{\pi^{-1}(0)}$ generates $P'$ at $\mu_{\pi^{-1}(0)}$-a.e.  point.  The same holds for $\mu_{\pi^{-1}(x)}$,  for $P$-a.e. $\mu$ and $\pi\mu$-a.e. $x$. Thus for $P$-a.e.  $\mu$ and $\pi\mu$-a.e. $x$,  the measure $\mu_{\pi^{-1}(x)}$ generates $P'$.  Since the last property is invariant under translation, scaling and normalization, we conclude that for $Q$-a.e.  $\nu$, the measure $\nu$ also satisfies this property.     Since $P$ is ergodic and $(P',S^*)$  is a factor of $(P,S^*)$,  $P'$ is also ergodic.

We now present arguments for the dyadic spreading property of  $\mu_{\pi^{-1}(\pi(x))}$.  We have seen in the above that for $Q$-a.e.  $\mu$ and $\mu$-a.e.  $x$, the measure $\eta=\mu_{\pi^{-1}(\pi(x))}$ is well defined,  uniform scaling,  and  generates some ergodic fractal distribution $P'$ with $\dim P'>0$.  
Since $\dim P'>0$,  $P'$-a.e.  measure $\nu$ has positive dimension,  and in particular $\nu$ is non-atomic.  It follows that for any $\epsilon>0$ and $P'$-a.e.  $\nu$, there exists $\rho_\nu>0$ such that $\nu(I)<\epsilon$ for all interval $I$ of length $\rho_\nu$.  If we define $A_\rho$ to be the the set of measures $\nu'$ such that $\nu'(I)<\epsilon$ for every interval $I$ of length $\rho$, then for small enough $\rho>0$, the $P'$-measure of $A_\rho$ is at least $1-\epsilon$.  Since $\eta$ generates $P'$, for $\eta$-a.e.  $x$, we have $\frac{1}{T}\int_0^T\delta_{\{\eta^{x,t}\in A_\rho\}}\to P'(A_\rho)$ as $T\to \infty$.  It follows that for for $\eta$-a.e.  $x$ we have
\begin{equation}\label{eq:pf:prop:slice-cp-distribution:1}
\limsup_{T\to\infty} \frac{1}{T}\int_0^T \left( \sup \eta^{x,t}(I)  \right) dt <2\epsilon,
\end{equation}
where the $\sup$ is over all intervals $I$ of length $\le \rho.$
Note that here we view $\eta$ as a measure on $\R$.  

Now,  let us consider its translation $\eta_h=\eta+h$.   Then for Lebesgue almost every $h$,  $\eta_h$-a.e.  $x$ is $2$-normal, meaning that the sequence $(\{2^nx\})_{n}$ is uniformly distributed on $[0,1]$, where $\{y\}$ denotes the fractional part of $y$.  Let us fix such $h$.  Hence for $\eta_h$-a.e.  $x$,  we have
\[
\lim_{n\to\infty} \frac{1}{n}|\{ 1\le k\le n: \dist(\{2^n x \}, \frac{1}{2}) \ge 1-\frac{1}{2^l} \}| = \frac{1}{2^{l-1}}. 
\]
In particular, we have
\begin{equation}\label{eq:pf:prop:slice-cp-distribution:2}
\liminf_{n\to\infty}\frac{1}{n}|\{ 1\le k\le n:B(x,2^{-k-l})\subset \D_k(x) \}| \ge 1- \frac{1}{2^{l-1}}>1-\epsilon, 
\end{equation}
provided $l$ is large enough so that  $2^{1-l}<\epsilon$.
By \eqref{eq:pf:prop:slice-cp-distribution:1},  for $\eta_h$-a.e.  $x$, we have
\begin{equation*}
\limsup_{n\to\infty} \frac{1}{n}\sum_{k=1}^n \frac{\eta_h(B(x,2^{-k-l}))}{\eta_h(B(x,2^{-k})}  <4\epsilon,
\end{equation*}
provided $l$ is large enough in a way depending on $\rho$.  It follows that for $\eta_h$-a.e.  $x$, 
\begin{equation}\label{eq:pf:prop:slice-cp-distribution:3}
\limsup_{n\to\infty} \frac{1}{n}|\{ 1\le k\le n: \eta_h(B(x,2^{-k-l}) \le 2 \eta_h(B(x,2^{-k-2l})) \}|<8\epsilon.
\end{equation}
Combining \eqref{eq:pf:prop:slice-cp-distribution:2} and \eqref{eq:pf:prop:slice-cp-distribution:3}, and noticing that $\D_{k+2l+1}(x)\subset B(x,2^{-k-2l})$,  we get  for $\eta_h$-a.e.  $x$, 
\begin{equation*}\label{eq:pf:prop:slice-cp-distribution:4}
\limsup_{n\to\infty} \frac{1}{n}|\{ 1\le k\le n: \eta_h(\D_{k}(x)) \le 2 \eta_h(\D_{k+2l+1}(x)) \}|< 9\epsilon.
\end{equation*}
By Egorov's theorem, we obtain the desired conclusion.
\end{proof}

\subsection{An entropy increasing theorem}\label{subsection: entropy-increasing-theorem:1}

\begin{theorem}\label{thm:entropy-increase-1}
Given $0<\gamma<1$ and $l\in \N$, there exists $\delta>0$ such that the following holds for all $n$ large enough: Let $A\subset [0,1]\cap 2^{-n}\Z$,  $\eta\in \cP([0,1]\cap 2^{-n}\Z)$.  Suppose that $\eta(A)\ge 1/2$ and for each $x\in A$,
\begin{equation}\label{eq:thm:ent-incre:1}
\left| \left\{1\le k\le n: \eta(\D_k(x))\ge 2\eta(\D_{k+l}(x))\right\} \right| \ge (1-\gamma/2)n.
\end{equation}
Let $B\subset [0,1]\cap 2^{-n}\Z$,  and for each $a\in A$,  let $B_a\subset B$.   Suppose that $|B|\le 2^{n(1-\gamma)}$ and $|B_a|\ge |B|^{1-\delta}$ for all $a\in A$. Then we have
\[
\left| \bigcup_{a\in A}(a+B_a) \right| \ge |B|^{1+\delta}.
\]
\end{theorem}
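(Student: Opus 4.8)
The plan is to argue by contradiction, using a double‑counting step to extract additive structure, the asymmetric Balog--Szemer\'edi--Gowers theorem (BSG) to convert that structure into a genuine sumset of small size, and finally Hochman's inverse theorem together with the multiscale spreading hypothesis \eqref{eq:thm:ent-incre:1} to force an entropy increase that the small sumset forbids. So, suppose for contradiction that $C:=\bigcup_{a\in A}(a+B_a)$ has $|C|<|B|^{1+\delta}$. For every $a\in A$ we then have $B_a\subseteq B\cap(C-a)$, hence $|B\cap(C-a)|\ge|B|^{1-\delta}$, and summing against $\eta$ gives $\sum_{b\in B}\eta(\{a\in A:a+b\in C\})=\sum_{a\in A}\eta(a)\,|B\cap(C-a)|\ge\tfrac12|B|^{1-\delta}$. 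Consider the bipartite graph $\Gamma=\{(a,b):a\in A,\ b\in B_a\}$; since every edge of $\Gamma$ has its sum in $C$, we get $\sum_{c\in C}r(c)=|\Gamma|\ge|A||B|^{1-\delta}$ where $r(c)=|\{(a,b)\in\Gamma:a+b=c\}|$, and so by Cauchy--Schwarz $\sum_{c\in C}r(c)^2\ge|\Gamma|^2/|C|\ge|A|^2|B|^{1-3\delta}$. Thus $\Gamma$ carries a large restricted additive energy between $A$ and $B$.

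Next I would apply BSG to $\Gamma$. After a harmless dyadic pigeonholing of $\eta$ on $A$ we may assume $\eta|_A$ is comparable to counting measure on its support, so the usual asymmetric BSG applies; the two relevant density parameters---the edge density $|\Gamma|/(|A||B|)$ and the relative sumset size $|C|/|B|$---are both $|B|^{\pm O(\delta)}=2^{\pm O(\delta)n}$, so BSG produces $A^*\subseteq A$, $B^*\subseteq B$ with $\eta(A^*)\ge 2^{-O(\delta)n}$, $|B^*|\ge|B|^{1-O(\delta)}$ and, crucially, $|A^*+B^*|\le|B|^{1+O(\delta)}$. The point of routing the argument through BSG is exactly that the restricted union $\bigcup_a(a+B_a)$ is not a convolution, whereas $A^*+B^*$ does support the honest convolution $\mu*\nu$ with $\mu=\mathrm{Unif}(B^*)$ and $\nu=\eta|_{A^*}/\eta(A^*)$; consequently $H(\mu*\nu,\D_n)\le\log|A^*+B^*|\le\log|B^*|+O(\delta)n$.

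I would then feed this into Hochman's inverse theorem (Theorem~\ref{theorem:Hochman-inverse-thm}) applied to $\mu$ and $\nu$, with $\epsilon$ small relative to $\gamma$, $k_1$ arbitrary and $k_2\ge l$, and with $\delta$ chosen small enough that $O(\delta)<\delta_{\mathrm{Hochman}}(\epsilon,k_1,k_2)$. The theorem then yields $I,J\subseteq\{1,\dots,n\}$ with $|I\cup J|\ge(1-\epsilon)n$ such that $\mu$ has $(1-\epsilon)k_1$‑entropy components at the scales in $I$ and $\nu$ has components concentrated on a single $\D_{k_2}$‑cell (up to mass $\epsilon$) at the scales in $J$. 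This should be impossible: on one hand $H(\mu,\D_n)=\log|B^*|\le\log|B|\le(1-\gamma)n$, so the entropy budget forces $|I|\le(1-\gamma+O(\epsilon))n$; on the other hand every point of $A^*\subseteq A$ still satisfies \eqref{eq:thm:ent-incre:1} at $\ge(1-\gamma/2)n$ scales, and since $k_2\ge l$ an atomic component of $\nu$ inside $\D_k(x)$ concentrated in a sub‑cell of scale $k+k_2$ would force $\eta(\D_{k+k_2})\le\tfrac12\eta(\D_k)$ at a scale good for a typical point of that sub‑cell, contradicting the concentration; a scale count then gives $|J|\le(\gamma/2+O(\epsilon))n$. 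Hence $|I\cup J|\le(1-\gamma/2+O(\epsilon))n<(1-\epsilon)n$ once $\epsilon<\gamma/4$, the desired contradiction.

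The hard part is the last bound $|J|\le(\gamma/2+O(\epsilon))n$, i.e. turning the pointwise multiscale spreading of the ambient measure $\eta$ on $A$ into control of the atomic scales of the renormalised measure $\nu=\eta|_{A^*}/\eta(A^*)$. The difficulty is that BSG may force $\eta(A^*)$ to be exponentially small in $n$, so one cannot simply say ``an atomic component of $\nu$ in $\D_k(x)$ forces $A^*$ to have $\eta$‑density $<\tfrac12$ in $\D_k(x)$, which is rare'': the $\eta$‑density of $A^*$ is genuinely small at many scales. The fix is a more careful multiscale bookkeeping---comparing the cell masses $\nu(\D_k(x))$ with $\eta(\D_k(x))$, summing the good‑scale counts of points of $A^*$ against $\eta$, and separating the scales where $\eta|_{A^*}$ concentrates for purely combinatorial reasons from those where the spreading of $\eta$ applies---while arranging the quantifiers (choosing $\delta$ small in terms of $\gamma$, $l$ and the Hochman/BSG constants) so that all the $o(1)$ losses are beaten by the fixed gap $\gamma/2$. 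This bookkeeping, rather than any single deep input, is where the work lies.
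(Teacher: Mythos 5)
The opening moves (double counting, pigeonholing $\eta$ on $A$, asymmetric BSG, Hochman's inverse theorem, and the entropy budget giving $|I|\le(1-\gamma+o(1))n$) all match the paper's proof. The gap is at the point you explicitly flag as ``the hard part'': you want to prove $|J|\le(\gamma/2+O(\epsilon))n$ and then conclude $|I\cup J|$ is too small. This bound on $|J|$ is not true, and it cannot be, for exactly the reason you yourself give: once BSG has passed to $A^*$ with $\eta(A^*)$ possibly exponentially small in $n$, the measure $\nu=\eta|_{A^*}/\eta(A^*)$ can have atom-type components at far more than $\gamma n/2$ scales without contradicting the spreading of $\eta$, because the concentration can come purely from the porosity of the set $A^*$ inside $\operatorname{supp}\eta$ rather than from $\eta$ failing to spread. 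In the paper's proof $|J|$ is in fact shown to be \emph{large}, $|J|\ge(\gamma-o(1))n$, and the contradiction is manufactured elsewhere.

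Concretely, what is missing from your proposal are the two tools the paper inserts between BSG and Hochman: (i) a Bourgain-type regularization lemma applied to $\nu_1=\mathrm{Unif}(A')$, extracting $A''\subset A'$ with $\nu_1(A'')\ge(2T+2)^{-\lfloor n/T\rfloor}$ and with cell masses of $\nu_1|_{A''}$ comparable within a factor of $2$ at every $T$-grid scale; this regularity is what converts the atom-type scenery of $\tilde\nu_1=\nu_1|_{A''}/\nu_1(A'')$ at the scales in $J$ into the clean combinatorial statement that for each $a\in A''$, at $\ge(\gamma-o(1))n$ scales $k$ one has $N_{2^{-k-l}}(\D_k(a)\cap A'')=1$; and (ii) a porosity lemma, which shows that a set $A''$ with this multiscale isolation property and satisfying the spreading hypothesis \eqref{eq:thm:ent-incre:1} must have $\eta(A'')\le 2^{-n\rho}$ for some $\rho=\rho(\gamma,l)>0$. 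The contradiction is then global: $\eta(A'')$ is simultaneously bounded below (by tracking the losses through the pigeonhole, BSG, and regularization, which are all $2^{-o(n)}$) and bounded above by $2^{-n\rho}$. Your ``more careful multiscale bookkeeping'' is not merely a technical polish of your $|J|\le\gamma n/2$ claim; it replaces that claim with a structurally different endgame, and without the regularization and porosity lemmas the argument does not close.
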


Let us prepare some ingredients that we shall use in the proof of Theorem \ref{thm:entropy-increase-1}.
The conclusion of Theorem \ref{thm:entropy-increase-1} is a consequence of Hochman's inverse theorem (Theorem \ref{theorem:Hochman-inverse-thm}) and the Balog-Szemer\'edi-Gowers theorem. 
Let us recall the asymmetric Balog-Szemer\'edi-Gowers theorem. 

\begin{theorem}[Corollary 2.36 of \cite{TaoVu-book}]\label{thm:Balog-Szemeredi-Gowers}
For any $\epsilon>0$, there is $\delta>0$ such that the following holds for $L$ large enough.  Let $A, B$ be finite subsets of an additive group $Z$ and $G\subset A\times B$ such that $|A|\le L|B|$ and 
\[
|G|\ge L^{-\delta} |A|\cdot|B|
\] 
and 
\[
|\{a+b:(a,b)\in G\}|\le L^\delta|A|.
\]
Then there exist $A'\subset A, B'\subset B$ satisfying
\[
|A'|\ge L^{-\epsilon}|A|,  |B'|\ge L^{-\epsilon}|B|
\]
and
\[|A'+B'|\le L^{\epsilon}|A'|.\]
\end{theorem}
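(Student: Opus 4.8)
The plan is to argue by contradiction: assume $C:=\bigcup_{a\in A}(a+B_a)$ has $|C|<|B|^{1+\delta}$, and produce a contradiction by feeding a bipartite graph into the asymmetric Balog--Szemer\'edi--Gowers theorem (Theorem~\ref{thm:Balog-Szemeredi-Gowers}) and then applying Hochman's inverse theorem (Theorem~\ref{theorem:Hochman-inverse-thm}). Before doing so I would record some reductions. Absorbing the translation in Definition~\ref{definition:spreading measures}, we may take the spreading hypothesis to be about $\eta$ itself. Condition~\eqref{eq:thm:ent-incre:1} forces, for every $x\in A$, a factor-$2$ drop of $k\mapsto\eta(\D_k(x))$ over a window of length $l$ at $\ge(1-\gamma/2)n$ scales $k$, hence $-\log\eta(\{x\})\ge(1-\gamma/2)n/l$ and therefore $|A|\ge\eta(A)\,2^{(1-\gamma/2)n/l}\ge 2^{cn}$ with $c=c(\gamma,l)>0$. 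Discarding the $o(1)$-mass carried by very light atoms and pigeonholing the remaining atom-masses of $A$ into the $O(n)$ relevant dyadic ranges, we pass to $A_0\subset A$ on which $\eta$ is comparable to counting measure, with $\eta(A_0)\gtrsim 1/n$ and $|A_0|\ge 2^{cn}$, the spreading hypothesis being inherited. Finally, double counting the edges of $G:=\{(a,b):a\in A_0,\ b\in B_a\}$ gives $|A_0|\,|B|^{1-\delta}\le\sum_{a}|B_a|\le|C|\cdot|B|<|B|^{2+\delta}$, whence $|A_0|<|B|^{1+2\delta}$; combined with $|A_0|\ge2^{cn}$ this yields $|B|>2^{cn/2}$, so $B$ is also exponentially large --- this is what makes the $L^{\pm\epsilon}$-type losses below affordable.

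The crucial move is to apply the asymmetric BSG theorem to $G$ \emph{with $B$ playing the role of the large set, $A_0$ the small set, and scaling parameter $L:=|B|$ itself}. With this choice all three hypotheses of Theorem~\ref{thm:Balog-Szemeredi-Gowers} hold almost for free: $|B|\le L|A_0|$ is trivial; $|G|\ge|A_0||B|^{1-\delta}\ge L^{-\delta_{\mathrm{BSG}}}|B||A_0|$ once $\delta\le\delta_{\mathrm{BSG}}$; and $|\{a+b:(a,b)\in G\}|\le|C|<|B|^{1+\delta}\le L^{\delta_{\mathrm{BSG}}}|B|$. This produces $B'\subset B$ and $A'\subset A_0$ with $|B'|\ge|B|^{1-\epsilon_{\mathrm{BSG}}}$, $|A'|\ge|B|^{-\epsilon_{\mathrm{BSG}}}|A_0|$, and $|A'+B'|\le|B|^{1+\epsilon_{\mathrm{BSG}}}$; since $\eta$ is comparable to counting measure on $A_0$, also $\eta(A')\ge 2^{-2\epsilon_{\mathrm{BSG}}n}$ for $n$ large. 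Writing $u_{B'}$ for the uniform probability measure on $B'$, the measure $u_{B'}*\eta_{A'}$ is supported on $A'+B'$, so
\[
H(u_{B'}*\eta_{A'},\D_n)\le\log|A'+B'|\le(1+\epsilon_{\mathrm{BSG}})\log|B|\le H(u_{B'},\D_n)+2\epsilon_{\mathrm{BSG}}n .
\]
Choosing $\epsilon_{\mathrm{BSG}}$ small enough that $2\epsilon_{\mathrm{BSG}}$ lies below the Hochman constant attached to $k_1,\,k_2:=l$ and a small $\epsilon_H\ll\gamma$, Theorem~\ref{theorem:Hochman-inverse-thm} applied with $\mu=u_{B'}$, $\nu=\eta_{A'}$ gives $I,J\subset\{1,\dots,n\}$ with $|I\cup J|\ge(1-\epsilon_H)n$ such that $u_{B'}$ has almost full local entropy at scales in $I$ while $\eta_{A'}$ is locally $(1-\epsilon_H)$-concentrated on one $\D_l$-subcell at scales in $J$. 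Since $H(u_{B'},\D_n)=\log|B'|\le(1-\gamma)n$, summing local entropies (via the $k_1$-block formula and Lemma~\ref{lemma:convexity-entropy:1}) the first property forces $|I|\le(1-\gamma+o_{\epsilon_H}(1))n$, hence $|J|\ge(\gamma-o_{\epsilon_H}(1))n$.

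It remains to contradict this using the spreading hypothesis, and this transfer is the part I expect to be the main obstacle: spreading is a statement about $\eta$, whereas the concentration just produced is about $\eta_{A'}$, and $A'$ is only a $|B|^{-\epsilon_{\mathrm{BSG}}}$-fraction of $A_0$, so one must check that non-concentration survives the restriction to $A'$. I would handle it by a density-regularisation argument: a union bound over scales (using $\eta(A')\ge 2^{-2\epsilon_{\mathrm{BSG}}n}$) shows that for $\eta_{A'}$-a.e.\ $x$ the densities $d_j(x):=\eta(A'\cap\D_j(x))/\eta(\D_j(x))$ satisfy $d_j(x)\ge 2^{-3\epsilon_{\mathrm{BSG}}n}$ for all $j\le n$, and then a telescoping estimate along the residue classes of scales modulo $l$ shows that $d_{j+l}(x)\ge 2(1-\epsilon_H)d_j(x)$ for at most $O(l\,\epsilon_{\mathrm{BSG}}n)<\gamma n/4$ values of $j$. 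At any scale $j$ that is both ``$\eta$-good'' for $x$ (a factor-$2$ drop) and has bounded density ratio one gets $\eta_{A'}(\D_{j+l}(x))<(1-\epsilon_H)\eta_{A'}(\D_j(x))$, so $\eta_{A'}$ is \emph{not} concentrated about $x$ at scale $j$. As every $x\in A'\subset A$ has $\ge(1-\gamma/2)n$ good scales and all but $\le\gamma n/4$ density-good scales, it has $\ge(1-\tfrac{5\gamma}{6})n$ such ``non-concentration'' scales; double counting these against $J$ (where, by the choice above, $\eta_{A'}$ \emph{is} concentrated about $x$ for all but a $2\epsilon_H$-fraction of $x$) forces $|J|<(\tfrac{5\gamma}{6}+o_{\epsilon_H}(1))n$, which contradicts $|J|\ge(\gamma-o_{\epsilon_H}(1))n$ once $\epsilon_H$ is small relative to $\gamma$. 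The parameters are fixed in the order: $\epsilon_H$ (from $\gamma$), then $\epsilon_{\mathrm{BSG}}$ (from $\gamma$, $l$ and Hochman's constant), then $\delta:=\delta_{\mathrm{BSG}}(\epsilon_{\mathrm{BSG}})$, and finally $n$ large.
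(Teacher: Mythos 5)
Your proposal does not prove the statement under review. The statement is Theorem \ref{thm:Balog-Szemeredi-Gowers} itself --- the asymmetric Balog--Szemer\'edi--Gowers theorem, a purely combinatorial assertion about finite subsets $A,B$ of an additive group and a graph $G\subset A\times B$ with many edges and a small restricted sumset. What you have written is instead an argument for Theorem \ref{thm:entropy-increase-1}: the measure $\eta$, the sets $B_a$, the spreading condition \eqref{eq:thm:ent-incre:1}, and the appeal to Hochman's inverse theorem all belong to that statement, and you explicitly \emph{invoke} Theorem \ref{thm:Balog-Szemeredi-Gowers} as a known ingredient, so as a proof of that theorem the argument would be circular. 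Nothing in the proposal addresses the combinatorial content that actually has to be verified.

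What is required is short. From the hypotheses $|G|\ge L^{-\delta}|A|\,|B|$ and $|\{a+b:(a,b)\in G\}|\le L^{\delta}|A|$, Cauchy--Schwarz applied to the representation function $r(s)=|\{(a,b)\in G: a+b=s\}|$ gives
\[
|E(A,B)|\;\ge\;\sum_{s}r(s)^2\;\ge\;\frac{|G|^2}{|\{a+b:(a,b)\in G\}|}\;\ge\;L^{-3\delta}|A|\,|B|^2,
\]
where $E(A,B)=\{(a_1,a_2,b_1,b_2)\in A^2\times B^2: a_1+b_1=a_2+b_2\}$; one then cites \cite[Corollary 2.36]{TaoVu-book}, which converts a large additive energy (together with $|A|\le L|B|$) into subsets $A'\subset A$, $B'\subset B$ of the required sizes with $|A'+B'|\le L^{\epsilon}|A'|$. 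This energy reduction plus the citation is the entirety of the paper's proof, and it is absent from your write-up. (As an aside, the argument you did write is broadly in the spirit of the paper's proof of Theorem \ref{thm:entropy-increase-1} --- pigeonholing $\eta$ onto a level set, BSG, Hochman's inverse theorem, then a porosity/concentration contradiction against the spreading hypothesis --- but that is a different statement from the one you were asked to prove.)
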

\begin{proof}
The assumptions $|G|\ge L^{-\delta} |A|\cdot|B|$ and $|\{a+b:(a,b)\in G\}|\le L^\delta|A|$ implies that we have
\[
|E(A,B)|\ge L^{-2\delta}|A||B|^2,
\]
where $E(A,B)=\{(a_1,a_2,b_1,b_2)\in A^2\times B^2: a_1+b_1=a_2+b_2\}.$ Now  \cite[Corollary 2.36]{TaoVu-book} implies the desired conclusion.
\end{proof}

We shall use the following regularization lemma for measures. It is a variant of Bourgain's regularization in \cite{Bourgain2010}.
The following version is proved in \cite[Lemma 3.4]{KS2019}.
\begin{lemma}\label{lemma:regularization measures}
Let $T\in \N$ be fixed.  For any $\mu\in \cP([0,1)^d)$ and any $1\le l\in \N$, there exist $A\subset \D_{lT}([0,1)^d)$ {\color{black} and sequence $(\sigma_1,\ldots,\sigma_l)\in [0,d]^l$} such that (writing $X=\cup_{Q\in A}Q$): (1) $\mu(X)\ge (2Td+2)^{-l}$; (2) for $1\le i\le l$,  any $Q\in  \D_{iT}([0,1)^d)$ with $\mu_{|X}(Q)>0$,   {\color{black} 
\[\mu_{|X}(Q)\le 2^{-\sigma_i T} \mu_{|X}(\widehat{Q})  \le 2\mu_{|X}(Q),\]
where $\widehat{Q}$ is the unique element in $ \D_{(i-1)T}([0,1)^d)$ containing $Q$.}
\end{lemma}

We shall also need the following lemma which is a standard porosity-type fact. 
\begin{lemma}\label{lemma:porous-set-small-mass}
Let $0<\tau<1,\gamma>0, 1\le l\in \N$ be given. Then there exist $\epsilon=\epsilon(\tau,\gamma,l)>0$  such that the following holds for all $n$  large enough.  Let $\mu\in \cP([0,1))$.  Let $A\subset [0,1)$ be such that
\begin{equation}\label{eq:lemma:porous-set-small-mass:1}
\left| \left\{1\le k\le n-l: \mu(\D_{k+l}(x)) \le \tau \mu(\D_{k}(x))\right\} \right| \ge n(1-\frac{\gamma}{2}) \ \textrm{ for } x\in A.
\end{equation}
Then for any set $D\subset A$ satisfying
 \begin{equation}\label{eq:lemma:porous-set-small-mass:2}
\left| \left\{1\le k\le n-l: {\color{black}\left| \left\{Q\in \D_{k+l}:Q\subset \D_k(x),  Q\cap D\neq\emptyset  \right\} \right|=1} \right\} \right| \ge n\gamma \ \textrm{ for } x\in D,
\end{equation}
we have
\[\mu(D)\le 2^{-n\epsilon}.\]
\end{lemma}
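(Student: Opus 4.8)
The plan is to cover $D$ by the dyadic cells of scale $\asymp n$ that it meets and to show that the total $\mu$-mass of this cover decays geometrically in $n$; the mechanism is a telescoping estimate run over blocks of $l$ consecutive dyadic scales, in which the two hypotheses are used \emph{jointly}: a scale at which $\mu$ loses a definite proportion of its mass and at which $D$ is locally confined forces the $\mu$-charged part of the cover to shrink.

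First I would combine the hypotheses. Since $D\subset A$, for every $x\in D$ the set $K_1(x)=\{1\le k\le n-l:\mu(\D_{k+l}(x))\le\tau\mu(\D_k(x))\}$ has $|K_1(x)|\ge(1-\gamma/2)n$ by \eqref{eq:lemma:porous-set-small-mass:1}, while the set $K_2(x)$ of $k$ with $N_{2^{-k-l}}(\D_k(x)\cap D)=1$ has $|K_2(x)|\ge\gamma n$ by \eqref{eq:lemma:porous-set-small-mass:2}, so $|K_1(x)\cap K_2(x)|\ge\gamma n/2$. For a scale $k$ in this intersection (\emph{good for} $x$), $\D_k(x)\cap D$ lies in an interval of diameter $2^{-k-l}$, hence in at most two adjacent dyadic cells of scale $k+l$ contained in $\D_k(x)$, one of which is $\D_{k+l}(x)$, satisfying $\mu(\D_{k+l}(x))\le\tau\mu(\D_k(x))$. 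Because the good scales need not be aligned with multiples of $l$, I would pigeonhole a residue $a\in\{0,\dots,l-1\}$: averaging $\sum_k\mathbf{1}[k\text{ good for }x]\ge\gamma n/2$ against the normalised restriction $\mu_D$ and over $a$ yields an $a$ for which a $\mu_D$-definite proportion of the blocks $B_j=[a+jl,a+(j+1)l)$, $0\le j<m:=\lfloor(n-a)/l\rfloor$, is good. Setting $\mathcal Q_j=\{Q\in\D_{a+jl}:Q\cap D\neq\emptyset\}$ and $W_j=\sum_{Q\in\mathcal Q_j}\mu(Q)$, the cells of $\mathcal Q_j$ cover $D$, so $(W_j)$ is non-increasing with $W_0\le 1$ and $\mu(D)\le W_j$ for all $j$. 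The heart of the argument is to prove that at each good block $W_{j+1}\le(1-c)W_j$ for some $c=c(\tau,\gamma,l)>0$; granting this, running over the $\asymp\gamma n/l$ good blocks gives $\mu(D)\le W_m\le(1-c)^{\Omega(n)}\le 2^{-\epsilon n}$ with $\epsilon=\epsilon(\tau,\gamma,l)>0$, as claimed.

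The step that requires genuine care — and the main obstacle — is the per-block gain $W_{j+1}\le(1-c)W_j$. If $D\cap Q$ were confined to a single child of $Q$ at scale $a+(j+1)l$, the mass-drop would force that child to have $\mu$-mass at most $\tau\mu(Q)$ and we would be done with $c=1-\tau$; the difficulties are that \eqref{eq:lemma:porous-set-small-mass:2} only confines $D\cap Q$ to \emph{two} adjacent children, with the drop known only for the one containing the base point, and that a priori $\mu$ could be almost entirely carried near $D$, leaving no slack on refinement. I would deal with both by first applying the regularisation Lemma~\ref{lemma:regularization measures} to $\mu$: decomposing $\mu$, with a loss of only a $2^{o(n)}$ factor that is absorbed into $\epsilon$, into pieces on each of which the dyadic cells of a common scale carry comparable mass, the drop condition turns into a genuine lower bound on the number of equal-mass $\mu$-charged children per parent, so the at most two children meeting $D$ occupy only a uniformly small $\mu$-fraction of the parent, which produces the required gain. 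A secondary point, handled by a routine counting that charges the at most $\gamma n/2$ scales outside $K_1(x)$, is to ensure these non-dropping scales cannot spoil more than a small fraction of the good blocks; the passage from a union of dyadic cells back to the set $D$, and the degenerate case of very small $l$, are dealt with by trivial adjustments to $l$ and $\epsilon$.
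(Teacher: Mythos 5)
You take a genuinely different route from the paper. The paper does not track the $\mu$-mass of a cover of $D$. It builds an auxiliary probability measure $\nu$ by restricting $\mu$, level by level, to the $D$-meeting children of each cell and renormalizing; for $x\in D$ one then has $\nu(\D_n(x))=\mu(\D_n(x))\prod_{k=0}^{n-1}a_{\D_k(x)}^{-1}$, where $a_Q$ is the $\mu$-fraction of $Q$ carried by its $D$-meeting children, and the gain at a good scale $k$ comes from the telescoping identity \eqref{eq:proof:lemma:porous-set-small-mass:3}, which combined with the drop hypothesis gives $a_{\D_k(x)}\cdots a_{\D_{k+l-1}(x)}\le\tau$. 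Accumulating $\tau^{-1}$ over disjoint good blocks yields $\nu(\D_n(x))\ge\mu(\D_n(x))2^{n\epsilon}$ on $D$, and $\mu(D)\le 2^{-n\epsilon}$ follows since $\nu$ is a probability measure.

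The gap in your scheme is exactly the per-block decrement $W_{j+1}\le(1-c)W_j$, which you correctly flag as the crux; your proposed repair via Lemma~\ref{lemma:regularization measures} does not close it. That lemma produces a set $X$ with $\mu(X)\ge 2^{-o(n)}$ on which $\mu_{|X}$ assigns comparable mass to every charged cell of a given $T$-scale. But the drop hypothesis \eqref{eq:lemma:porous-set-small-mass:1} is a statement about $\mu$, not about $\mu_{|X}$: the regularization can, and generically will, discard precisely those heavy children of $\D_k(x)$ that sit far from $D$, in which case $\mu_{|X}(\D_{k+l}(x))$ may be essentially all of $\mu_{|X}(\D_k(x))$ and there is no lower bound on the number of $\mu_{|X}$-charged children of $\D_k(x)$. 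The inference you want — that ``the drop condition turns into a genuine lower bound on the number of equal-mass $\mu$-charged children per parent'' — therefore does not follow, and without it the block argument only gives the trivial $W_{j+1}\le W_j$. (There is also the secondary point that regularization would then control $\mu_{|X}(D)$ rather than $\mu(D)$.) The mechanism that makes the paper's construction work where yours stalls is the renormalization by $a_Q$ at every level: it absorbs whatever mass lies outside the $D$-meeting cells, so the decay factor $\tau^{-1}$ is extracted along the single chain of cells through $x$ without ever having to compare against a heavy sibling cell. If you want a cover-counting argument, the natural fix is to weight the cover by the tilted measure $\nu$ rather than by $\mu$ — at which point you have essentially rederived the paper's proof.
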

\begin{proof}
Assuming $\epsilon>0$ is small enough that we shall specify later,  we proceed to construct a probability measure $\nu$ on $[0,1)$ satisfying
\[
\nu(\D_n(x))>\mu(\D_n(x))2^{n\epsilon}  \   \textrm{ for } x\in D.
\]
This will immediate yield the desired conclusion.

We start with assigning the mass of $\nu$ on $\D_1([0,1))$.  Let $Q_0=[0,1)$ and 
\[a_{Q_0}=\left( \sum_{\substack{Q\in \D_1 \\ Q\cap D\neq \emptyset }}\mu(Q) \right)/\mu(Q_0).\]
For each $Q\in \D_1$ with $Q\cap D\neq \emptyset$, let
\[
\nu(Q)=\mu(Q)/a_{Q_0}.
\]
This defines a probability measure on $\D_1$.  
Let $1\le k\le n-1$. Suppose we have assigned the mass of $\nu$ on $\D_k$.  Let  $Q\in \D_k$ with $Q\cap D\neq \emptyset$.  Set
\[a_{Q}=\left( \sum_{\substack{Q'\in \D_{k+1} \\ Q'\cap D\neq \emptyset }}\mu(Q') \right)/\mu(Q).\]
For each $Q'\in \D_{k+1}$ with $Q'\cap D\neq \emptyset$, let
\[
\nu(Q')=\nu(Q) \frac{\mu(Q')}{\mu(Q)a_{Q_0}}.
\]
In such way,  we  construct a probability measure on  $\D_n$.  By definition, for each $x\in D$, we have
\[
\nu(\D_n(x))=\prod_{k=0}^{n-1}\frac{\mu(\D_{k+1}(x))}{\mu(\D_{k}(x))a_{\D_{k}(x)}}=\mu(\D_n(x))\prod_{k=0}^{n-1}\frac{1}{a_{\D_{k}(x)}}.
\]
{\color{black}For notational convenience,  we shall write $\left|\D_{k+l}\cap \D_k(x)\cap D \right|=1$ to mean  \[\left| \left\{Q\in \D_{k+l}:Q\subset \D_k(x),  Q\cap D\neq\emptyset  \right\} \right|=1.\] }
By \eqref{eq:lemma:porous-set-small-mass:1} and \eqref{eq:lemma:porous-set-small-mass:2}, for each $x\in D$ we have
 \begin{equation}\label{eq:proof:lemma:porous-set-small-mass:1}
\left| \left\{1\le k\le n-l: \mu(\D_{k+l}(x)) \le \tau \mu(\D_{k}(x)) \textrm{ and } \left|\D_{k+l}\cap \D_k(x)\cap D \right|=1  \right\} \right| \ge n(\gamma-\gamma/2).
\end{equation}
Observe that if {\color{black}$\left|\D_{k+l}\cap \D_k(x)\cap D \right|=1$,}  we must have 
\[
a_{\D_{k+j}(x)}=\frac{\mu(\D_{k+j+1}(x))}{\mu(\D_{k+j}(x))} \textrm{ for } 0\le j\le l-1,
\]
which implies that 
\begin{equation}\label{eq:proof:lemma:porous-set-small-mass:3}
a_{\D_{k}(x)}\ldots a_{\D_{k+l-1}(x)}=\frac{\mu(\D_{k+l}(x))}{\mu(\D_{k}(x))}. 
\end{equation}
By \eqref{eq:proof:lemma:porous-set-small-mass:1} and \eqref{eq:proof:lemma:porous-set-small-mass:3},  it follows that
\[
\prod_{k=0}^{n-1}\frac{1}{a_{\D_{k}(x)}}\ge \tau^{-n\rho}
\]
for some $\rho>0$ depending on  $\gamma/2$ and $l$.  Thus for each $x\in D$, we have
\[
\nu(\D_n(x))\ge \mu(\D_n(x))\tau^{-n\rho}.
\]
Therefore
\[
\mu(D)\le \sum_{\substack{Q\in \D_n \\ Q\cap D\neq \emptyset}}\mu(Q)\le \sum_{\substack{Q\in \D_n \\ Q\cap D\neq \emptyset}}\nu(Q) \tau^{n\rho} \le  \tau^{n\rho}\le  2^{-n\epsilon}
\]
provided $\epsilon$ is small enough in terms of $\tau$ and $\rho$.
\end{proof}

Now we are ready to prove Theorem \ref{thm:entropy-increase-1}.  
\begin{proof}[Proof of Theorem \ref{thm:entropy-increase-1}]
Let $\eta$ and $A$ be as in Theorem \ref{thm:entropy-increase-1}.  Let us fix a $T\in \N$.  We suppose that $T$ is large enough in a way depending of $\gamma$ and $l$ that we shall specify later. 

For $j\in \N$,  let $A(j)=\{a\in A: \eta(\D_n(a))\in [2^{-nj/T},2^{-n(j+1)/T}) \}$.  Let us pick $j_0\in \{0,\ldots, T\}$ such that 
\[
\sum_{a\in A(j_0)}\eta(\D_n(a))=\max_{0\le j\le T}\sum_{a\in A(j)}\eta(\D_n(a)).
\]
Observe that we have
\[
\sum_{j\ge T+1}\sum_{a\in A(j)}\eta(\D_n(a))\le 2^n 2^{-n(1+1/T)} =2^{-n/T}.
\]
Thus we have
\begin{equation}\label{eq:pf-Thm-ent-inc:1}
\sum_{a\in A(j_0)}\eta(\D_n(a))\ge (1-2^{-n/T})/(T+1).
\end{equation}
In the following, we shall simply write $A_0$ for $A(j_0)$. 
By definition of $A_0$ we have
\begin{equation}\label{eq:pf-Thm-ent-inc:2}
2^{-n/T}\le \frac{\eta(\D_n(a))}{\eta(\D_n(a'))}\le 2^{n/T} \textrm{ for } a,a'\in A_0.
\end{equation}

Now, let $B\subset [0,1]\cap 2^{-n}\Z$ and $B_a\subset B, a\in A_0$ be such that  
\begin{equation}\label{eq:pf-Thm-ent-inc:3}
|B_a|\ge |B|^{1-\epsilon} \textrm{ for all } a\in A_0,
\end{equation}
and
\begin{equation}\label{eq:pf-Thm-ent-inc:4}
\left|\bigcup_{a\in A_0}(a+B_a)\right|\le |B|^{1+\epsilon}.
\end{equation}
Our goal is to show that \eqref{eq:pf-Thm-ent-inc:3} and \eqref{eq:pf-Thm-ent-inc:4} could not hold simultaneously if $\epsilon$ is small enough in a way depending on $\gamma$ and $l$.

Let 
\[G=\bigcup_{a\in A_0}\{a\times B_a\}\subset A_0\times B.\]
Note that we have $|B|\le 2^n|A|$.  Thus we may apply the asymmetric Balog-Szemer\'edi-Gowers theorem (Theorem \ref{thm:Balog-Szemeredi-Gowers}) to conclude that there exist $\delta=o_\epsilon(1)$,  explicitly depending on $\epsilon$, and $A'\subset A_0$, $B'\subset B$ such that 
\begin{equation}\label{eq:pf-Thm-ent-inc:5}
|A'|\ge  2^{-n\delta}|A_0|,
\end{equation}
\begin{equation}\label{eq:pf-Thm-ent-inc:6}
|B'|\ge  2^{-n\delta}|B|,
\end{equation}
\begin{equation}\label{eq:pf-Thm-ent-inc:7}
|A'+B'|\le  2^{n\delta}|B'|.
\end{equation}
Let 
\begin{equation}\label{eq:pf-Thm-ent-inc:8-bis}
\nu_1=\frac{1}{|A'|}\sum_{a\in A'}\delta_{a} \  \textrm{ and } \  \nu_2=\frac{1}{|B'|}\sum_{b\in B'}\delta_{b}.
\end{equation}
Then $\nu_1$ and $\nu_2$ are probability measures on $A'$ and $B'$,  respectively.  Observe that we have
\begin{equation}\label{eq:pf-Thm-ent-inc:8}
H(\nu_2,\D_n)=\log |B'|.
\end{equation}
Write $m=\lfloor n/T\rfloor$.  
{\color{black}By Lemma \ref{lemma:regularization measures},   there exist $A''\subset A'$ and  $(\sigma_1,\ldots,\sigma_m)\in [0,d]^m$  such that 
\begin{equation}\label{eq:pf-Thm-ent-inc:9}
\nu_1(A'') > (2T+2)^{-m}
\end{equation}
and for $1\le i\le m$,  any $Q\in  \D_{iT}([0,1)^d)$ with ${\nu_1}_{|A''}(Q)>0$,   
\begin{equation}\label{eq:pf-Thm-ent-inc:10}
{\nu_1}_{|A''}(Q)\le 2^{-\sigma_i T} {\nu_1}_{|A''}(\widehat{Q})  \le 2{\nu_1}_{|A''}(Q),
\end{equation}
where $\widehat{Q}$ is the unique element in $ \D_{(i-1)T}([0,1)^d)$ containing $Q$.}
Let \[\tilde{\nu}_1=\frac{{\nu_1}_{|A''}}{\nu_1(A'')}.\]
Observe that since $T$ is large, we have
\begin{equation}\label{eq:pf-Thm-ent-inc:11}
(2T+2)^{-m}=2^{-mT\log (2T+2)/T} = 2^{-n \cdot o_T(1)}.
\end{equation}
Note that we have
\[
H(\tilde{\nu}_1*\nu_2,\D_n)\le \log |A''+B'| \le \log |A'+B'|\le \log|B'|+n\delta=H(\nu_2,\D_n)+n\delta.
\]
Applying Hochman's inverse theorem (Theorem \ref{theorem:Hochman-inverse-thm}),  we get that whenever $\delta$ is small enough,  there exist small constant $\tau=o_{\delta}(1)$ and $I,J\subset \{1,\ldots,n\}$ such that $|I\cup J|>(1-\tau)n$ and 
\begin{equation}\label{eq:pf-Thm-ent-inc:12}
\nu_2\left(x:H(\nu_2^{\D_i(x)},\D_T)>(1-\tau)T\right) >1-\tau \textrm{ for } i\in I,
\end{equation}
\begin{equation}\label{eq:pf-Thm-ent-inc:13}
\tilde{\nu}_1\left(x: \tilde{\nu}_1^{\D_j(x)}(Q)>(1-\tau) \textrm{ for some } Q\in \D_{2T} \right) >1-\tau \textrm{ for } j\in J.
\end{equation}
Recall that by \eqref{eq:pf-Thm-ent-inc:8} we have
\begin{equation}\label{eq:pf-Thm-ent-inc:14-1}
H(\nu_2,\D_n)=\log |B'|\le \log |B|\le (1-\gamma)n. 
\end{equation}
We now need  the following formula  which holds for any measure $\nu\in \cP([0,1])$ (see \cite[Lemma 3.4]{Hochman2014}):
\begin{equation}\label{eq:pf-Thm-ent-inc:14-2}
H(\nu,\D_n)=\sum_{k=0}^{n-1}\sum_{Q\in \D_k}\nu(Q)\frac{H(\nu^Q,\D_T)}{T} +O(T).
\end{equation}
Combining \eqref{eq:pf-Thm-ent-inc:12},  \eqref{eq:pf-Thm-ent-inc:14-1} and \eqref{eq:pf-Thm-ent-inc:14-2}, it follows that 
we must have
\[
|I|\le (1-\gamma+o_\tau(1))n.
\]
Thus we have
\begin{equation}\label{eq:pf-Thm-ent-inc:15}
|J| \ge (\gamma-o_\tau(1))n.
\end{equation}

{\color{black}
For $j\in J$,  let $F_j$ be the set of $x$ such that 
\begin{equation}\label{eq:pf-Thm-ent-inc:16*}
\tilde{\nu}_1^{\D_j(x)}(Q)>(1-\tau) \textrm{ for some } Q\in \D_{2T} .
\end{equation}
Then by \eqref{eq:pf-Thm-ent-inc:13} we have $\tilde{\nu}_1(F_j)>1-\tau.$  

Let us fix $j\in J$ and $x\in F_j$. There exists a unique $k$ such that $j<kT\le j+T$.  Hence $(k+1)T\in (j+T,j+2T]$.  By Property \eqref{eq:pf-Thm-ent-inc:10},   it follows that for each $Q\in \D_{(k-1)T}$ and all $Q_1,Q_2\in \D_{(k+1)T}$ such that $Q_1,Q_2\subset Q$ and $\tilde{\nu}_1(Q_1)>0, \tilde{\nu}_1(Q_2)>0$,  we have 
\begin{equation}\label{eq:pf-Thm-ent-inc:16.1}
\frac{1}{4}\le \frac{\tilde{\nu}_1(Q_1)}{\tilde{\nu}_1(Q_2)} \le 4.
\end{equation}
Since $j\in J$ and $x\in F_j$,  by property \eqref{eq:pf-Thm-ent-inc:16*}, there exists $Q_0\in \D_{j+2T}$ such that $Q_0\subset \D_j(x) $ and 
\begin{equation}\label{eq:pf-Thm-ent-inc:16.2}
\tilde{\nu}_1(Q_0)>(1-\tau)\tilde{\nu}_1(\D_j(x)).
\end{equation}
We claim that by \eqref{eq:pf-Thm-ent-inc:16.1} and \eqref{eq:pf-Thm-ent-inc:16.2}, we must have 
\begin{equation}\label{eq:pf-Thm-ent-inc:16**}
\left| \left\{ Q'\in \D_{(k+1)T}: Q'\subset  D_j(x) \textrm{  and  }   \tilde{\nu}_1(Q')>0  \right\} \right| \le \frac{4}{1-\tau}.
\end{equation}
For otherwise, there exists $ Q'\in \D_{(k+1)T}$ such that $Q'\subset  D_j(x) $ and 
\begin{equation}\label{eq:pf-Thm-ent-inc:16.3}
0<\tilde{\nu}_1(Q')< \frac{(1-\tau)}{4}\tilde{\nu}_1(\D_j(x)).
\end{equation}
Recall that $Q_0\in \D_{j+2T}$ and $j+2T\ge (k+1)T$.   Let $\widehat{Q}_0\in \D_{(k+1)T}$ be the unique element containing $Q_0$. Then by \eqref{eq:pf-Thm-ent-inc:16.2} we have 
\begin{equation}\label{eq:pf-Thm-ent-inc:16.4}
 \tilde{\nu}_1(\widehat{Q}_0) \ge \tilde{\nu}_1(Q_0)\ge (1-\tau) \tilde{\nu}_1(\D_j(x)).
\end{equation}
Combining \eqref{eq:pf-Thm-ent-inc:16.3} and \eqref{eq:pf-Thm-ent-inc:16.4}, we get 
\[
\tilde{\nu}_1(\widehat{Q}_0) > (1-\tau)\frac{4}{(1-\tau)}\tilde{\nu}_1(Q')=4\tilde{\nu}_1(Q').
\]
This is a contradiction to \eqref{eq:pf-Thm-ent-inc:16.1} as we have $\widehat{Q}_0,  Q'\subset  \D_{(k-1)T}(x)$, $\widehat{Q}_0,  Q'\in  \D_{(k+1)T}$  and $\tilde{\nu}_1(\widehat{Q}_0) >0,  \tilde{\nu}_1(Q')>0$.  Thus we must have \eqref{eq:pf-Thm-ent-inc:16**}.

Recall that $\tau$ has been chosen small enough, in particular, we may assume that $\tau<1/2$,   so that $\frac{4}{1-\tau}<8$.

Note that by property \eqref{eq:pf-Thm-ent-inc:10}, if $Q'\in \D_{(k+1)T}$, then $\tilde{\nu}_1(Q')>0$ if and only if $Q'\cap A''\neq \emptyset.$ Thus \eqref{eq:pf-Thm-ent-inc:16**} becomes 
\begin{equation}\label{eq:pf-Thm-ent-inc:16**.1}
\left| \left\{ Q'\in \D_{(k+1)T}: Q'\subset  D_j(x) \textrm{  and  }  Q'\cap A''\neq \emptyset  \right\} \right| \le \frac{4}{1-\tau}<8.
\end{equation}
It is not hard to see that the above property implies the following 
\begin{equation}\label{eq:pf-Thm-ent-inc:16***}
\left|\left\{j< i\le (k+1)T: \left| \left\{ Q'\in \D_{i}: Q'\subset  D_{i-1}(x) \textrm{  and  }  Q'\cap A''\neq \emptyset  \right\} \right|>1\right\}\right| \le 8.
\end{equation}
Recall that $j<kT$.   Thus we have shown that for all $j\in J$ and $x\in F_j$, the property \eqref{eq:pf-Thm-ent-inc:16***} holds.

Recall that $\tilde{\nu}_1(F_j)>1-\tau$ for each $j\in J$.  By Fubini's theorem and Markov's inequality, we can find a set $F$ with   $\tilde{\nu}_1(F)>1-\sqrt{\tau}$ and for each $x\in F$, we have
\begin{equation}\label{eq:pf-Thm-ent-inc:16****}
\left|\left\{ j\in J: x\in F_j  \right\}\right| \ge |J|(1-\sqrt{\tau}).
\end{equation}
Now,  let us fix $x\in F$. Let $J_x=\{j\in J: x\in F_x\}$.
Then we have 
\[
|J_x| \ge |J|(1-\sqrt{\tau})\ge \gamma(1-o_\tau(1))n.
\]
Recall that $m=\lfloor n/T\rfloor$.  Since $|J_x|\ge \gamma(1-o_\tau(1))n$,  we must have 
\begin{equation}\label{eq:pf-Thm-ent-inc:16.5-1}
\left| \left\{ 1\le k\le m: J_x\cap[(k-1)T,kT )\neq \emptyset  \right\} \right| \ge  \gamma(1-o_\tau(1)-o_T(1))n.
\end{equation}
Note that for each $j\in J_x$, we have \eqref{eq:pf-Thm-ent-inc:16***}.  Hence for each $j\in J_x$,  with $j\in [(k-1)T,kT )$, we have 
\begin{equation}\label{eq:pf-Thm-ent-inc:16.6}
\left|\left\{kT\le i\le (k+1)T: \left| \left\{ Q\in \D_{i}: Q\subset  D_{i-1}(x) \textrm{  and  }  Q\cap A''\neq \emptyset  \right\} \right|>1\right\}\right| \le 8.
\end{equation}
Recall that $T$ is large compared to $l$. Thus it follows from \eqref{eq:pf-Thm-ent-inc:16.6} that we have 
\begin{equation}\label{eq:pf-Thm-ent-inc:16.7}
\left|\left\{kT\le i\le (k+1)T: \left| \left\{ Q\in \D_{i+l}: Q\subset  D_{i}(x) \textrm{  and  }  Q\cap A''\neq \emptyset  \right\} \right|=1\right\}\right| \ge T(1-o_T(1)).
\end{equation}
Thus, combining \eqref{eq:pf-Thm-ent-inc:16.5-1} and \eqref{eq:pf-Thm-ent-inc:16.7},  we obtain 
\begin{equation*}
\left|\left\{1\le i\le n: \left| \left\{ Q\in \D_{i+l}: Q\subset  D_{i}(x) \textrm{  and  }  Q\cap A''\neq \emptyset  \right\} \right|=1\right\}\right| \ge \gamma(1-o_\tau(1)-o_T(1))n \ge \frac{\gamma}{2}n.
\end{equation*}
The above property holds true for all $x\in F$.

Recall that we have assumed that $\eta$ satisfies \eqref{eq:thm:ent-incre:1}.  Thus we may apply Lemma \ref{lemma:porous-set-small-mass} to conclude that 
\begin{equation}\label{eq:pf-Thm-ent-inc:19}
\eta(F)\le 2^{-n\rho}
\end{equation}
for some absolute constant $\rho$ depending only on $\gamma$ and $l$.

We recall that $F\subset A''$ and $\tilde{\nu}_1(F)=\nu_1(F)/\nu_1(A'')\ge 1-\sqrt{\tau}$.  }
We also recall that (see \eqref{eq:pf-Thm-ent-inc:1}, \eqref{eq:pf-Thm-ent-inc:2}, \eqref{eq:pf-Thm-ent-inc:5},  \eqref{eq:pf-Thm-ent-inc:8-bis}, \eqref{eq:pf-Thm-ent-inc:8} and \eqref{eq:pf-Thm-ent-inc:9}):
\begin{eqnarray*}
 & & A''\subset A'\subset A_0\subset A,  \\
& & \eta(A)\ge 1/2 \textrm{ and } \eta(A_0)\ge (1-2^{-n/T})/(T+1), \\
& & 2^{-n/T}\le \frac{\eta(a)}{\eta(a')}\le 2^{n/T} \textrm{ for } a,a'\in A_0,\\
& & |A'|\ge 2^{-n\delta}|A_0|, \\
& & |A''| \ge (2T+2)^{-m}|A'|. 
\end{eqnarray*}
From these estimates, we get 
\[
\eta(F)\ge (1-\sqrt{\tau})\cdot(2T+2)^{-m}\cdot 2^{-n\delta} \cdot 2^{-2n/T}  \cdot \frac{1}{2}  \cdot (1-2^{-n/T})/(T+1) >2^{-n\rho}
\]
 provided $T$ is large enough and $\delta$ is small enough in terms of $\rho$. This is a contradiction to \eqref{eq:pf-Thm-ent-inc:19}.
\end{proof}

\subsection{Proof of Theorem \ref{thm:proj-thm-cp-distribution:-2}}\label{subsection: proof-of-proj-theorem-cp-dist:1}

Now we are ready to prove Theorem  \ref{thm:proj-thm-cp-distribution:-2}.
\begin{theorem}\label{thm:proj-cp-distribution:1}
Let $Q$ be an ergodic CP-distribution on $\R^2$. Then the following set is at most countable:
\begin{equation}\label{eq:thm:proj-cp-distribution:1}
\{ \pi\in S^1: \dim\pi Q < \min(1,\dim Q)\}.
\end{equation}
\end{theorem}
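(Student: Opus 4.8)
The plan mirrors the architecture of Section~\ref{section:easy-proof-proj-self-similar-without-rotation}: first reduce the countability assertion to a local ``perturbation increases dimension'' statement, then prove the latter by a strip decomposition along $\pi_0$-fibres combined with the entropy-increasing Theorem~\ref{thm:entropy-increase-1}, which replaces the convolution-based arguments of the self-similar case. The set $E=\{\pi\in G(2,1):\dim\pi Q<\min(1,\dim Q)\}$ is easily seen to be Borel, so if it were uncountable it would carry a non-atomic Borel probability measure. Exactly as in the deduction of Theorem~\ref{thm:main:-2} from Theorem~\ref{thm:main:pertubate-increase:1} (Egorov's theorem, followed by choosing two nearby points in a positive-measure subset), it therefore suffices to prove: for every $\pi_0$ with $\beta_0:=\dim\pi_0 Q<\min(1,\dim Q)$ there are $\delta>0$ and $\epsilon_0>0$ such that $\dim\pi Q\ge\beta_0+\delta$ for all $\pi\in B(\pi_0,\epsilon_0)\setminus\{\pi_0\}$.

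Fix such a $\pi_0$, write $\alpha=\dim Q$ and $\kappa=\alpha-\beta_0>0$, and note $\beta_0<1$. By Proposition~\ref{prop:dim-projection-cp-distribution:1}, for $Q$-a.e.\ $\mu$ the measures $\mu$, $\pi_0\mu$ and, for each fixed $\pi$, $\pi\mu$ are exact dimensional of dimensions $\alpha$, $\beta_0$, $\dim\pi Q$; and from the local-dimension estimate \eqref{eq:prop:dim-projection-cp-distribution:1:2} (a set of $\mu$-mass $<\epsilon$ contributes $<\epsilon+o(1)$ to the entropy density), one gets $\frac1N H(\pi\mu,\D_N)\to\dim\pi Q$ as $N\to\infty$. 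By Proposition~\ref{prop:slice-cp-distribution:1}, for $Q$-a.e.\ $\mu$ and $\mu$-a.e.\ $x$ the conditional measure $\mu_{\pi_0^{-1}(\pi_0(x))}$, viewed as a measure on $\R$, is exact dimensional of dimension $\kappa>0$ and is $(n,l_0,\epsilon_1)$-dyadic spreading for all large $n$, with $l_0$ and the associated good sets depending only on $Q,\pi_0,\epsilon_1$. Fix $\epsilon_1>0$ small, with $\epsilon_1<(1-\beta_0)/3$, put $\gamma=1-\beta_0-\epsilon_1\in(0,1)$, and let $\delta=\delta(\gamma,l_0)>0$ be the constant supplied by Theorem~\ref{thm:entropy-increase-1}.

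Now fix $\pi=\pi_\theta$ with $0<|\theta|<\epsilon_0$ and set $n_0=\lceil\log_2(1/|\theta|)\rceil$, so that a $\pi_0$-fibre strip of width $2^{-n_0}$ is sheared by $\pi_\theta$ by the comparable amount $|\theta|\asymp 2^{-n_0}$. Decompose $\mu=\sum_{I\in\D_{n_0}(\R)}a_I\nu_I$ along the $\pi_0$-strips ($a_I=\pi_0\mu(I)$, $\nu_I=\mu_{\pi_0^{-1}(I)}$); then $\pi_\theta\mu=\sum_I a_I\,\pi_\theta\nu_I$, each $\pi_\theta\nu_I$ is supported in $O(1)$ cells of $\D_{n_0}(\R)$, and if $\widehat\nu_I$ denotes the rescaling of $\pi_\theta\nu_I$ from its $O(2^{-n_0})$-diameter support up to $[0,1]$, then $\widehat\nu_I\approx\int\delta_{u(x)}*(\lambda\,\widetilde\mu_x)\,d\rho_I(x)$, where $\rho_I=(\pi_0\mu)^I$ is exact dimensional of dimension $\beta_0$, $\widetilde\mu_x$ is the fibre measure over $x$ (dimension $\kappa$, dyadic spreading), $\lambda\asymp1$, and $x\mapsto u(x)$ is affine. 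An entropy-addition identity in the spirit of Lemma~\ref{lemma:pf-thm-pertubate-increase-2:1} (using parts (1) and (3) of Lemma~\ref{lemma:almost-continuity-entropy:1} and convexity) gives, for every large $N$,
\[
H(\pi_\theta\mu,\D_N)\ \ge\ H(\pi_0\mu,\D_{n_0})+\sum_I a_I\,H\!\big(\widehat\nu_I,\D_{N-n_0}\big)-O(1).
\]
The crux is that, for all $I$ in a set of $a_I$-mass at least $1-\epsilon_1$,
\[
H\!\big(\widehat\nu_I,\D_{N-n_0}\big)\ \ge\ (1+\delta)\,\beta_0\,(N-n_0)-o(N).
\]
Granting this, and choosing $\epsilon_0$ (hence $n_0$) large enough that $H(\pi_0\mu,\D_{n_0})\ge(\beta_0-\epsilon_1)n_0$ and the good-strip conditions hold, we obtain $\frac1N H(\pi_\theta\mu,\D_N)\ge\beta_0+\delta\frac{N-n_0}{N}-O(\epsilon_1)-o_N(1)$ for all large $N$; letting $N\to\infty$ with $\theta$ fixed and using $\frac1N H(\pi_\theta\mu,\D_N)\to\dim\pi_\theta Q$, we get $\dim\pi_\theta Q\ge\beta_0+\delta-O(\epsilon_1)\ge\beta_0+\delta/2$, with $\delta/2$ independent of $\theta$, as required.

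The main obstacle is the displayed lower bound on $H(\widehat\nu_I,\D_{N-n_0})$, where the proof genuinely departs from the self-similar case. Unlike there, $\pi_\theta\mu$ carries \emph{no convolution structure}---the fibre measures $\widetilde\mu_x$ vary with $x$---so one cannot feed a convolution into Hochman's inverse theorem directly, and must instead invoke the sumset estimate of Theorem~\ref{thm:entropy-increase-1}. After regularizing $\rho_I$ (Lemma~\ref{lemma:regularization measures}) to be essentially uniform on a set $B\subset 2^{-k}\Z$ with $|B|\approx 2^{\beta_0 k}\le 2^{(1-\gamma)k}$, $k=N-n_0$, the support of $\widehat\nu_I$ at scale $k$ contains $\bigcup_{a\in A}(a+B_a)$, where $A$ is the support of a (suitably translated) spreading fibre measure and $B_a=\{u(x):\widetilde\mu_x\text{ charges the cell of }a\}$; the spreading hypothesis \eqref{eq:thm:ent-incre:1} then holds for $A$, Theorem~\ref{thm:entropy-increase-1} yields $|\bigcup_{a\in A}(a+B_a)|\ge|B|^{1+\delta}$, and the uniformity of $\rho_I$ converts this into $H(\widehat\nu_I,\D_k)\ge(1-o(1))(1+\delta)\log|B|$. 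The genuinely delicate point---where I expect the real work to lie---is verifying the hypothesis $|B_a|\ge|B|^{1-\delta}$: one must show that for most positions $a$ in a reference fibre, almost every $\pi_0$-position $x$ has $\widetilde\mu_x$ charging the cell of $a$. This is precisely where the CP-structure is used beyond dimension counting: all conditional measures $\mu_{\pi_0^{-1}(\pi_0(x))}$ generate the same ergodic fractal distribution (Proposition~\ref{prop:slice-cp-distribution:1} and its proof), so on a typical block of scales a large sub-population of the $x$'s has fibre measures that are simultaneously spreading and ``coherent'', which makes the $B_a$ essentially fill out $B$. Carrying out this sub-population extraction together with the accompanying regularization bookkeeping at the correct scales is the principal difficulty.
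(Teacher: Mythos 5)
Your proposal takes a genuinely different architectural route from the paper's. You aim to prove a ``perturbation increases dimension'' lemma for CP-distributions analogous to Theorem~\ref{thm:main:pertubate-increase:1}, decomposing $\mu$ along $\pi_0$-strips and bounding $H(\pi_\theta\mu,\D_N)$ below as $N\to\infty$. The paper's proof does not establish such a lemma for CP-distributions; it instead selects two exceptional directions $\pi_1,\pi_2\in E_2$ with $0<\dist(\pi_1,\pi_2)\le\rho_0$ and $|\dim\pi_1 Q-\dim\pi_2 Q|<\epsilon_2$, and derives a contradiction from two incompatible covering-number estimates for the single set $\pi_2(\tilde D_\mu)$ at the single scale $2^{-2l}$, where $l=\lfloor-\log\dist(\pi_1,\pi_2)\rfloor$. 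This ``one scale, one set'' strategy side-steps having to control entropy at all fine scales $N\to\infty$, which your plan requires because you want $\frac1N H(\pi_\theta\mu,\D_N)\to\dim\pi_\theta Q$. In the self-similar case, Section~\ref{section:easy-proof-proj-self-similar-without-rotation} could afford a single-scale entropy estimate thanks to the super-multiplicativity $\dim\eta\ge\frac1k H(\eta,\D_k)-\frac Ck$, but no analogue of this is available for a CP-measure $\mu$, so your route would need the crux lower bound at \emph{every} large $k=N-n_0$ --- a substantively harder task than what the paper actually proves.

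The sharper gap is in the crux step itself. You define $B_a=\{u(x):\tilde\mu_x\text{ charges the cell of }a\}$, a ``which fibers charge $a$'' set, and assert that $|B_a|\ge|B|^{1-\delta}$ follows because the conditional measures $\mu_{\pi_0^{-1}(x)}$ all generate the same ergodic fractal distribution $P'$. That inference does not hold: generating a common $P'$ is a scenery-flow statement and puts no constraint on how the actual supports of distinct fibers overlap at any fixed finite scale; nothing prevents the fibers over different $x$ from charging nearly disjoint collections of $2^{-k}$-cells, in which case $|B_a|$ is tiny for every $a$. The paper makes an essentially opposite choice of decomposition: it fixes a single fiber $\pi_1^{-1}(z_i)$, lets $A'=\{x_a\}$ be a $2^{-l}$-net on that fiber intersected with the good set, and sets $B_a'=a\cap\tilde D_\mu$ and $B_a=\pi_1(B_a')$ --- the \emph{$\pi_1$-projection of a local piece of the good set near $x_a$}, not a fiber-coherence set. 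The hypothesis $N_{2^{-2l}}(B_a)\ge 2^{l(\beta_0-o(1))}\gtrsim|B|^{1-\delta}$ is then supplied by the uniform local projection entropy of the CP-distribution, namely \eqref{eq:prop:dim-projection-cp-distribution:1:4} applied to the component $\mu^{\D_l(x_a)}$, together with Lemma~\ref{lemma:entropy of restricted meas-small delete} to pass from $\mu$ to $\mu_{|\tilde D_\mu}$. This is a completely different mechanism from your coherence claim, and without this change of viewpoint the sub-population-extraction step you gesture at has no valid starting point.
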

We shall use the following easy fact in the proof of Theorem \ref{thm:proj-cp-distribution:1}.
\begin{lemma}\label{lemma:entropy of restricted meas-small delete}
Let $\mu\in \cP([0,1]^d)$. Suppose that $A\subset [0,1]^d$ such that $\mu(A)>1-\epsilon$. Then for any partition $\mathcal{A}$ of $[0,1]^d$,
\[
H(\mu_{|A},\mathcal{A}) >  H(\mu,\mathcal{A})-c\sqrt{\epsilon}\log |\mathcal{A}|,
\]
where $c$ is a constant depending only on $\R^d$.
\end{lemma}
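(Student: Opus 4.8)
The plan is to prove the equivalent non-strict bound $H(\mu_{|A},\mathcal{A})\ge H(\mu,\mathcal{A})-c\sqrt{\epsilon}\log|\mathcal{A}|$ by a direct estimate on the concave function $\phi(x)=-x\log x$; this is essentially the classical total-variation continuity of Shannon entropy, but for a sub-probability restriction it is cleanest to argue by hand. First I would dispose of the trivial cases: we may assume $|\mathcal{A}|\ge2$, so that $\log|\mathcal{A}|\ge1$, and---after enlarging $c$ if necessary---we may assume $\epsilon<e^{-1}$, since for larger $\epsilon$ one has $c\sqrt{\epsilon}\log|\mathcal{A}|\ge\log|\mathcal{A}|\ge H(\mu,\mathcal{A})\ge H(\mu,\mathcal{A})-H(\mu_{|A},\mathcal{A})$ and there is nothing to prove (here $H(\mu_{|A},\mathcal{A})\ge0$ because $\mu$ is a probability measure).

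For $B\in\mathcal{A}$ set $p_B=\mu(B)$, $q_B=\mu(A\cap B)$ and $r_B=\mu(A^c\cap B)=p_B-q_B\ge0$, so that $\sum_{B\in\mathcal{A}}r_B=\mu(A^c)<\epsilon$. Since $\phi$ is concave on $[0,1]$ with $\phi(0)=0$, it is subadditive there, i.e. $\phi(q_B+r_B)\le\phi(q_B)+\phi(r_B)$ for all $B$; summing over $B$ gives
\[
H(\mu,\mathcal{A})-H(\mu_{|A},\mathcal{A})=\sum_{B\in\mathcal{A}}\bigl(\phi(p_B)-\phi(q_B)\bigr)\le\sum_{B\in\mathcal{A}}\phi(r_B).
\]
To bound the last sum I would apply Jensen's inequality to $\phi$ over the $|\mathcal{A}|$ summands, obtaining $\sum_{B}\phi(r_B)\le|\mathcal{A}|\,\phi\!\bigl(\mu(A^c)/|\mathcal{A}|\bigr)=\mu(A^c)\log|\mathcal{A}|+\mu(A^c)\log\bigl(1/\mu(A^c)\bigr)$. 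Using $\mu(A^c)<\epsilon<e^{-1}$, the monotonicity of $t\mapsto t\log(1/t)$ on $(0,e^{-1}]$, and the elementary inequality $t\log(1/t)\le c_0\sqrt{t}$ (valid because $\sqrt{t}\,\ln(1/t)\le2/e$), this is at most $\epsilon\log|\mathcal{A}|+c_0\sqrt{\epsilon}$, which is $\le(1+c_0)\sqrt{\epsilon}\log|\mathcal{A}|$ since $\epsilon\le\sqrt{\epsilon}$ and $\log|\mathcal{A}|\ge1$. Taking $c=1+c_0$, an absolute constant (hence in particular depending only on $\R^d$), finishes the proof.

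I do not expect a genuine obstacle. The one point that needs a little care is that Jensen's inequality leaves an additive error $\mu(A^c)\log(1/\mu(A^c))$, comparable to $\epsilon\log(1/\epsilon)$, which carries no $\log|\mathcal{A}|$ factor; absorbing it into the target $c\sqrt{\epsilon}\log|\mathcal{A}|$ is precisely why one reduces first to $|\mathcal{A}|\ge2$ (to supply the missing factor $\log|\mathcal{A}|\ge1$) and why one bounds $\epsilon\log(1/\epsilon)$ by $O(\sqrt{\epsilon})$ rather than by a multiple of $\epsilon$---hence the appearance of $\sqrt{\epsilon}$, not $\epsilon$, in the statement.
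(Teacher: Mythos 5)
The paper does not actually supply a proof of this lemma --- it is stated without argument as ``the following easy fact'' before the proof of Theorem~\ref{thm:proj-cp-distribution:1} --- so there is nothing in the source to compare against. Your argument is correct and is the natural one: the identity $H(\mu,\mathcal{A})-H(\mu_{|A},\mathcal{A})=\sum_{B}\bigl(\phi(p_B)-\phi(q_B)\bigr)$ with $\phi(x)=-x\log x$, the subadditivity $\phi(q_B+r_B)\le\phi(q_B)+\phi(r_B)$, Jensen to get $\sum_B\phi(r_B)\le\mu(A^c)\log|\mathcal{A}|+\mu(A^c)\log\bigl(1/\mu(A^c)\bigr)$, and the elementary bound $t\log(1/t)=O(\sqrt{t})$ all check out, and the preliminary reductions ($|\mathcal{A}|\ge 2$, $\epsilon<e^{-1}$, and $H(\mu_{|A},\mathcal{A})\ge0$ since $\phi\ge0$ on $[0,1]$) are handled correctly. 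The only cosmetic point is that as literally written (with strict ``$>$'') the lemma fails in the degenerate case $|\mathcal{A}|=1$, where both sides are $0$; you correctly set that aside at the start, and away from it a slight enlargement of $c$ converts your ``$\ge$'' into the stated ``$>$''. Your constant is in fact absolute (independent of $d$), which is only stronger than the paper's ``depending only on $\R^d$''.
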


\begin{proof}[Proof of Theorem \ref{thm:proj-cp-distribution:1}]
Let us suppose,  by contradiction, that the exceptional set \eqref{eq:thm:proj-cp-distribution:1}  is uncountable.  It is not difficult to check that the set \eqref{eq:thm:proj-cp-distribution:1} is a Borel set.  There exists $\kappa>0$ such that the set 
\[
E=\{ \pi\in G(2,1): \dim \pi Q<\min(1,\dim Q)-\kappa \}
\]
is uncountable and there exists a non-atomic Borel probability measure $\nu$ that is supported on a compact subset of $E$.

Applying Proposition \ref{prop:slice-cp-distribution:1} to $Q$ and $\pi\in E$ and by Egorov's theorem,  we obtain that for any $\epsilon_0>0$, there exist $l_0,  n_0\in \N$,  and $E_0\subset E$ with $\nu(E_0)>1-\epsilon_0$, and for each $\pi\in E_0$, there exists $A_\pi^0$ such that $Q(A_\pi^0)>1-\epsilon_0$ and for each $\mu\in A_\pi^0$, and $n\ge n_0$  there exists $D_{\pi,\mu}^0$ with $\mu(D_{\pi,\mu}^0)>1-\epsilon_0$ and for each $x\in D_{\pi,\mu}^0$, the measure $\mu_{\pi^{-1}(\pi(x))}$ is well defined  and $(n,l_0,\epsilon_0)$-dyadic spreading.

Applying Proposition \ref{prop:dim-projection-cp-distribution:1} to $Q$ and $\pi\in E$ and by Egorov's theorem,  we obtain that for any $\epsilon_1>0$,  there exists $l_1\in \N$ and $E_1\subset E$ with $\nu(E_1)>1-\epsilon_1$ such that for all  $\pi\in E_1,$ and  $l\ge l_1$, there exists $A_\pi^1$ such that $Q(A_\pi^1)>1-\epsilon_1$ and for any $\mu\in A_\pi^1$, there exists $D_{\pi,\mu}^1$ with $\mu(D_{\pi,\mu}^1)>1-\epsilon_1$ and for each $x\in D_{\pi,\mu}^1$ we have (writing $\beta_\pi=\dim \pi Q$)
\begin{eqnarray}
 & & \mu(B(x,r))= r^{\alpha\pm \epsilon_1} \  \textrm{ for }  r\le 2^{-l_1},  \label{eq:thm-proj-cp-distribution-1:2}\\
& & \pi\mu(B(\pi(x),r)) = r^{\beta_\pi\pm \epsilon_1} \  \textrm{ for } r\le 2^{-l_1}, \label{eq:thm-proj-cp-distribution-1:3}\\
& & \frac{1}{l}H\left(\mu^{\D_l(x)},\D_l\right) = \alpha\pm \epsilon_1,\label{eq:thm-proj-cp-distribution-1:4}\\
& & \frac{1}{l}H\left(\pi\left(\mu^{\D_l(x)}\right),\D_l\right) = \beta_\pi\pm \epsilon_1. \label{eq:thm-proj-cp-distribution-1:5}
\end{eqnarray}
Note that by \eqref{eq:thm-proj-cp-distribution-1:3},  for each $\pi\in E_1$ and $\mu\in A_{\pi}^1$, we necessarily have
\begin{equation}\label{eq:thm-proj-cp-distribution-1:6}
N_{r}(\pi(D_{\pi,\mu}^1)) = r^{-\beta_\pi\pm \epsilon_1} \ \textrm{ for } r\le 2^{-l_1}.
\end{equation}
Note that we have $\nu(E_0\cap E_1)>1-\epsilon_0-\epsilon_1>0$, provided $\epsilon_0$ and $\epsilon_1$ were assumed small enough.  Since $\nu$ is non-atomic, the set $E_0\cap E_1$ is uncountable. In particular,  for any $\epsilon_2>0$,   there must exist $E_2\subset E_0\cap E_1$ with $\nu(E_2)>0$ such that 
 \begin{equation}\label{eq:thm-proj-cp-distribution-1:7}
|\dim \pi Q-\dim \pi' Q | <\frac{\epsilon_2}{2} \ \textrm{ for }\pi,\pi'\in E_2.
\end{equation}

In the following, we shall show that there is a contradiction if $\epsilon_0,\epsilon_1$ and $\epsilon_2$ were assumed  small enough in terms of $\beta$ and $l_0$. 

Let us pick some $\pi_0\in E_2$ and denote  $\beta_0=\pi_0 Q$.  Then for each $\pi\in E_2$, we have 
\[
\dim \pi Q = \beta_0\pm \frac{\epsilon_2}{2} .
\]
Let $\rho_0=2^{-l_1-1}$.   
 Since $E_2$ is infinite (uncountable), there exist $\pi_1,\pi_2\in E_2$ such that 
\begin{equation}\label{eq:thm-proj-cp-distribution-1:8}
0<\dist(\pi_1,  \pi_2)\le \rho_0. 
\end{equation}
Then  we have 
\[
l:=\lfloor -\log \dist(\pi_1,\pi_2) \rfloor \ge \lfloor -\log \rho_0 \rfloor \ge l_1+1.
\]
Recall that the logarithm $\log$ is in base 2. Note that we have $\dist (\pi_1,\pi_2)=2^{-l\pm 1}$.  We have seen that (consequence of Proposition \ref{prop:dim-projection-cp-distribution:1}),  for each $\pi\in \{\pi_1,\pi_2\}$, there exists $A_{\pi}^1$ with $Q(A_\pi^1)>1-\epsilon_1$ and for each $\mu\in A_\pi^1$, there exists $D_{\pi,\mu}^1$ with $\mu(D_{\pi,\mu}^1)>1-\epsilon_1$ and for each $x\in D_{\pi,\mu}^1$ we have  \eqref{eq:thm-proj-cp-distribution-1:2},  \eqref{eq:thm-proj-cp-distribution-1:3}, \eqref{eq:thm-proj-cp-distribution-1:4}, and \eqref{eq:thm-proj-cp-distribution-1:5}.
Let 
\[
A=A_{\pi_1}^0 \cap A_{\pi_1}^1 \cap A_{\pi_2}^0 \cap A_{\pi_2}^1. 
\]
Then we have $Q(A)\ge 1-2(\epsilon_0+\epsilon_1)$.   For $\mu\in A$, let 
\[
D_\mu=D_{\pi_1,\mu}^0 \cap D_{\pi_1,\mu}^1 \cap D_{\pi_2,\mu}^0 \cap D_{\pi_2,\mu}^1.
\]

Recall that we denoted $\beta_\pi=\dim \pi Q$,   and   $\beta_0=\dim \pi_0 Q$ for some $\pi_0\in E_2$.  By \eqref{eq:thm-proj-cp-distribution-1:6} and \eqref{eq:thm-proj-cp-distribution-1:7},  we have
\begin{equation}\label{eq:thm-proj-cp-distribution-1:9}
N_{r}(\pi(D_\mu)) \le r^{-\beta_0-\epsilon_2- \epsilon_1} \ \textrm{ for } \pi\in\{\pi_1,\pi_2\},r\le 2^{-l_1}.
\end{equation}
Note that we have  $\mu(D_\mu)>1-2(\epsilon_0+\epsilon_1)$.
By Markov's inequality,  there exists $\epsilon_3=o_{\epsilon_0,\epsilon_1}(1)$ (we may take $\epsilon_3=\sqrt{2(\epsilon_0+\epsilon_1)}$) and $\tilde{D}_\mu\subset D_\mu$ with $\mu(\tilde{D}_\mu)>1-\epsilon_3$ such that for each $x\in \tilde{D}_\mu$,   we have 
\begin{eqnarray}
 & & \mu_{\pi_1^{-1}(\pi_1(x))} \textrm{ is well defined and }   \mu_{\pi_1^{-1}(\pi_1(x))}(\tilde{D}_\mu) >1-\epsilon_3, \label{eq:thm-proj-cp-distribution-1:i}\\
& & \frac{\mu(\D_l(x)\cap \tilde{D}_\mu)}{\mu(\D_l(x))} > 1-\epsilon_3. \label{eq:thm-proj-cp-distribution-1:iii}
\end{eqnarray}

Since $\pi_1\mu(\pi_1\tilde{D}_\mu)\ge \mu(\tilde{D}_\mu)>1-\epsilon_3$ and for each $x\in \tilde{D}_\mu$ we have (recall \eqref{eq:thm-proj-cp-distribution-1:3}) 
\[
\pi_1\mu(B(\pi_1(x),  r)) \le r^{\beta_0-\epsilon_2-\epsilon_1} \  \textrm{ for } r\le 2^{-l_1}, 
\]
we must have
\begin{equation}\label{eq:thm-proj-cp-distribution-1:10}
N_{r}(\pi_1(\tilde{D}_\mu)) \ge c \cdot  r^{-\beta_0+\epsilon_2+ \epsilon_1} \ \textrm{ for } r\le 2^{-l_1}
\end{equation}
for some absolute constant $c$ depending only on $\R$.  Specific to $r=2^{-l}$,  it follows from \eqref{eq:thm-proj-cp-distribution-1:10} that we can find points $z_i\in \pi_1(\tilde{D}_\mu),  1\le i\le \lfloor\frac{1}{16}\cdot c \cdot  2^{l(\beta_0-\epsilon_2- \epsilon_1)}  \rfloor$, such that 
\begin{equation}\label{eq:thm-proj-cp-distribution-1:10.1}
\dist(z_i,z_j)\ge 2^{-l+3} \ \textrm{ for } i\neq  j.
\end{equation}
Let us now fix any $i\in \{1,\ldots,  \lfloor\frac{1}{16}\cdot c \cdot  2^{l(\beta_0-\epsilon_2- \epsilon_1)}  \rfloor\}$.  
In the following, we are going to show that 
\begin{equation}\label{eq:thm-proj-cp-distribution-1:10-1}
N_{2^{-2l}}\left( \pi_2\left( \pi_1^{-1}(B(z_i,2^{-l+1})) \right) \right) \ge 2^{l(\beta_0+\delta/2)}
\end{equation}
for some $\delta>0$ depending only on $l_0$ and $\beta$.
Note that since  $\dist(\pi_1,\pi_2)= 2^{-l\pm 1}$,  in view of \eqref{eq:thm-proj-cp-distribution-1:10.1}, we infer that  whenever $i\neq j$, we have 
\[
 \pi_2\left( \pi_1^{-1}(B(z_i,2^{-l+1})) \right) \cap  \pi_2\left( \pi_1^{-1}(B(z_j,2^{-l+1})) \right) = \emptyset.
\]
Hence, once \eqref{eq:thm-proj-cp-distribution-1:10-1} is proved,  we then can deduce that 
\[
N_{2^{-2l}}\left( \pi_2(\tilde{D}_\mu)\right) \ge \sum_{i=1}^{\lfloor\frac{1}{16}\cdot c \cdot  2^{l(\beta_0-\epsilon_2- \epsilon_1)}  \rfloor} 2^{l(\beta_0+\delta/2)} \ge 2^{2l(\beta_0+\delta/4)} >  2^{2l(\beta_0+2(\epsilon_2+\epsilon_1))},
\]
provided $\epsilon_2 $ and $\epsilon_1$ were assumed small enough.  This is a contradiction to \eqref{eq:thm-proj-cp-distribution-1:9}, thus concluding the proof of Theorem \ref{thm:proj-cp-distribution:1}.

Let us proceed to the proof of \eqref{eq:thm-proj-cp-distribution-1:10-1}. Let 
\[L'=\pi_1^{-1}(z_i)\cap \tilde{D}_\mu\ \textrm{ and } \ A=\{a\in \D_l(\R^2): a\cap L'\neq \emptyset\}.\]
For each $a\in A$, let us fix $x_a\in a\cap L'$.   For each $a\in A$, let $B_a'=a\cap \tilde{D}_\mu$.  Finally, let 
\[B_a=\pi_1 (B'_a) \ \textrm{ and } \  B=\pi_1(\cup_{a\in A}B'_a)=\cup_{a\in A}B_a.\]
Observe that the set $B$ has diameter bounded by $4\cdot 2^{-l}$.  By  \eqref{eq:thm-proj-cp-distribution-1:3},  for each $y\in B$ we have
\[
\pi_1\mu(B(y,  r)) = r^{\beta_0\pm\epsilon_2\pm\epsilon_1} \  \textrm{ for } r\le 2^{-l_1}.
\]
Hence we must have 
\begin{equation}\label{eq:thm-proj-cp-distribution-1:11}
N_{2^{-2l}}(B) \le  c \cdot  2^{l(\beta_0+2\epsilon_2+2 \epsilon_1)},
\end{equation}
for some absolute constant $c$ depending only on $4$ and $\R$.  

Recall that for each $x\in \tilde{D}_\mu$,  the property \eqref{eq:thm-proj-cp-distribution-1:5} is satisfied.  Combining this with  \eqref{eq:thm-proj-cp-distribution-1:iii} and Lemma \ref{lemma:entropy of restricted meas-small delete},  we obtain that the following holds for each $a\in A$:
\[
\frac{1}{l}H\left(\pi_1\left(\left(\mu_{|\tilde{D}_\mu}\right)^{\D_l(x_a)}\right),  \D_l\right)\ge \beta_0-\epsilon_2- \epsilon_1-o_{\epsilon_3}(1). 
\]
Since the measure $\pi_1\left(\left(\mu_{|\tilde{D}_\mu}\right)^{\D_l(x_a)}\right)$ is supported on $\pi_1(B_a')=B_a$, we have
\begin{equation}\label{eq:thm-proj-cp-distribution-1:12}
N_{2^{-2l}}(B_a) \ge   c \cdot  2^{l(\beta_0-\epsilon_2- \epsilon_1-o_{\epsilon_3}(1))}
\end{equation}
for some absolute constant $c>0$ depending only on $\R$.

Now, let us consider the following set 
\[
\pi_2\left( \bigcup_{a\in A}B_a' \right).
\]
Since $\dist(\pi_1,\pi_2)\le 2^{-l+1}$ and the diameter of $B_a'$ is bounded by $2^{-l+1}$,  we have
\begin{equation}\label{eq:thm-proj-cp-distribution-1:13}
d_{\rm H}(\pi_2(B_a'),  \pi_1(B_a')+ \pi_2(x_a) -\pi_1(x_a)) \le  c\cdot 2^{-2l},
\end{equation}
where $c>0$ is some absolute constant only depending on $\R^2$.  Here $d_{\rm H}$ stands for the Hausdorff distance between subsets of $\R^1$. 
Recall that for each $a\in A$ we have
\begin{equation}\label{eq:thm-proj-cp-distribution-1:14}
\pi_1(x_a)=z_i.
\end{equation}
Hence by \eqref{eq:thm-proj-cp-distribution-1:13} we have
{\color{black}
\[
d_{\rm H}\left(\pi_2\left(\bigcup_{a\in A}B_a'\right),  \bigcup_{a\in A}\left(\pi_1(B_a')+ \pi_2(x_a) -z_i\right)\right)\le  c\cdot 2^{-2l}
\]}
Recall that we denoted $B_a=\pi_1(B_a')$.  Thus, to estimate the $2^{-2l}$-cover number of $\pi_2\left( \bigcup_{a\in A}B_a' \right)$, we only need to estimate the following
\[
N_{2^{-2l}}\left( \bigcup_{a\in A}(B_a+ \pi_2(x_a) -z_i) \right).
\]
For this, we shall use Theorem \ref{thm:entropy-increase-1}.   
First,  let us recall that we have $\mu_{\pi_1^{-1}(z_i)} (L')>1-\epsilon_3$ (recall  \eqref{eq:thm-proj-cp-distribution-1:i}) and the 
measure $\mu_{\pi_1^{-1}(z_i)}$ is $(n,l_0,\epsilon_0)$-dyadic spreading.
The set $A'=\{x_a:a\in A\}\subset L'$ and $d_{\rm H}(A',L')\le c\cdot 2^{-l}$.
The set $B$ satisfies \eqref{eq:thm-proj-cp-distribution-1:11}.  Since we initially assume $\epsilon_0,\epsilon_1,\epsilon_2$ are small enough, so that we have $\beta_0+2(\epsilon_2+ \epsilon_1) \le \beta_0+(1-\beta_0)/2<1$ and  $1-\epsilon_0>\frac{1}{2}$ and $1-2\epsilon_0>\beta_0+2(\epsilon_2+ \epsilon_1) $.  Thus we may effectively apply Theorem \ref{thm:entropy-increase-1} to conclude that  there exists $\delta>0$, only depends on $l_0$ and $\beta$ such that 
\[
N_{2^{-2l}}\left( \bigcup_{a\in A}(B_a+ \pi_2(x_a) -z_i) \right)\ge \left(N_{2^{-2l}}(B)\right)^{1+\delta}\ge 2^{l(\beta_0+\delta/2)}
\]
provided $\epsilon_1, \epsilon_2,  \epsilon_3$ are assumed small enough.
Thus we have shown that 
\begin{equation}\label{eq:thm-proj-cp-distribution-1:19}
N_{2^{-2l}}\left( \pi_2\left( \bigcup_{a\in A}B_a' \right) \right) \ge c\cdot 2^{l(\beta_0+\delta/2)}
\end{equation}
for some absolute constant $c>0$.  Note that for each $a\in A$,  we have $\pi_1(B_a')\subset B(z_i,2^{-l+1})$.  Hence in particular \eqref{eq:thm-proj-cp-distribution-1:19} implies that 
\begin{equation*}\label{eq:thm-proj-cp-distribution-1:20}
N_{2^{-2l}}\left( \pi_2\left( \pi_1^{-1}(B(z_i,2^{-l+1})) \right) \right) \ge c\cdot 2^{l(\beta_0+\delta/2)}.
\end{equation*}
This is what we aimed to prove.
\end{proof}

\section{Miscellaneous Proofs}\label{section:remainning proofs}

\subsection{Proof of Theorem  \ref{thm:main-proj-upper-unif-entropy-dim}}

In the following we shall prove Theorem  \ref{thm:main-proj-upper-unif-entropy-dim} using Theorem   \ref{thm:proj-thm-cp-distribution:-2}.   Before giving the proof, we need some preparations.

Let us first prove the following properties for measures with upper uniform entropy dimension $\alpha$.
\begin{lemma}\label{lemma:unif-dim-meas-generate-unif-dim-CP-dist}
Let $\mu\in \cP(\R^d)$. Suppose that $\mu$ has upper uniform entropy dimension $\alpha$. Then for any $\epsilon>0$, there exists Borel set $A\subset \R^d$ with $\mu(A)>1-\epsilon$ such that for each $x\in A$, there exist $(m_k)_k\subset \N$ and CP-distribution $Q_x$ such that 
\[
\frac{1}{m_k}\sum_{n=1}^{m_k} \delta_{\{ \mu^{\D_n(x)} \}}  \rightharpoonup  Q_x \  \textrm{ as } k\to \infty
\]
and
\[
Q_x\left(\eta: |\dim \eta-\alpha|<\epsilon\right)>1-\epsilon.
\]
\end{lemma}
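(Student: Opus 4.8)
The plan is to show that $\mu$-typical points $x$ have the property that the empirical measures $\frac1{m}\sum_{n=1}^m\delta_{\mu^{\D_n(x)}}$ have, along a suitable subsequence $(m_k)$, a weak-$*$ limit which is a CP-distribution concentrated on measures of dimension close to $\alpha$. First I would recall the standard construction (going back to Furstenberg, and as used in \cite{Furstenber2008,HS2012,Hochman2010-fractal-distributions}) that shows any weak-$*$ subsequential limit of the \emph{symmetrized} empirical distributions built from the magnification dynamics is automatically $M$-invariant and adapted, hence a CP-distribution. The only point requiring care is that the raw averages $\frac1m\sum_{n=1}^m\delta_{\mu^{\D_n(x)}}$ on $\cP([0,1]^d)$ are not literally invariant under $M$ because $M$ also tracks the point coordinate; the fix, as in the cited references, is to lift to $\cP([0,1]^d)\times[0,1]^d$ using the base point $S_{\D_n(x)}(x)$, take the average $\frac1m\sum_{n=1}^m\delta_{(\mu^{\D_n(x)},\,S_{\D_n(x)}(x))}$, and extract a weak-$*$ convergent subsequence; compactness of $\cP(\cP([0,1]^d)\times[0,1]^d)$ gives such a subsequence for $\mu$-a.e.\ $x$, and a telescoping/boundedness argument shows the limit is $M$-invariant, while adaptedness is inherited because each averaged atom is of the form $(\eta,y)$ with $y$ distributed as $\eta$ in the limit (this is the standard ``CP at a point'' argument; I will cite \cite{HS2012} or \cite{Hochman2010-fractal-distributions} rather than reproduce it).

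Next, the dimension concentration. By hypothesis $\mu$ has upper uniform entropy dimension $\alpha$, so by \eqref{eq:def:upper-unif-entrop-dim:2} there is, for every small $\epsilon'>0$, an integer $l$ and a set of $x$ of $\mu$-measure $>1-\epsilon'$ such that for a density-$(1-\epsilon')$ set of scales $k$ we have $|H(\mu^{\D_k(x)},\D_l)-l\alpha|\le l\epsilon'$. The key observation is that $H(\eta,\D_l)/l$ is (for fixed $l$) a bounded continuous function of $\eta\in\cP([0,1]^d)$, so this scale-average statement transfers to any weak-$*$ subsequential limit $Q_x$: averaging $\eta\mapsto H(\eta,\D_l)/l$ against $\frac1{m_k}\sum_{n=1}^{m_k}\delta_{\mu^{\D_n(x)}}$ and passing to the limit yields $\int H(\eta,\D_l)\,dQ_x(\eta)=l\alpha\pm l\epsilon'$ up to the density-$\epsilon'$ exceptional scales, hence $\big|\int H(\eta,\D_l)\,dQ_x(\eta)-l\alpha\big|=O(l\epsilon')$. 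Combining this across a sequence $l\to\infty$ (using the entropy formula $H(\eta,\D_{nl})=\sum_{k}\frac1l H(\eta^{\D_k},\D_l)+O(l)$ as in \cite[Lemma 3.4]{Hochman2014}, together with $M$-invariance of $Q_x$ to identify the Cesàro average of the component entropies with the single integral $\int H(\eta,\D_l)\,dQ_x$) gives $\dim Q_x = \alpha\pm O(\epsilon')$, and since $Q_x$-a.e.\ $\eta$ is exact dimensional, a further Markov/Egorov step upgrades this to $Q_x(\eta:|\dim\eta-\alpha|<\epsilon)>1-\epsilon$ once $\epsilon'$ is chosen small in terms of $\epsilon$.

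Finally I would assemble the two ingredients: choose $\epsilon'$ small enough depending on $\epsilon$, let $A$ be the intersection of the full-measure set where the empirical distributions converge along some subsequence with the $\mu$-measure $>1-\epsilon'$ set coming from uniform entropy dimension; then $\mu(A)>1-\epsilon$ (adjusting constants), and for each $x\in A$ the limit $Q_x$ is a CP-distribution with the required dimension concentration. I expect the main obstacle to be the second paragraph: one must be careful that the scale-averaged entropy control, which holds only along a density-$(1-\epsilon')$ set of scales and only for a fixed finite resolution $l$, genuinely passes to the limiting distribution and can be bootstrapped from ``$\int H(\eta,\D_l)\,dQ_x\approx l\alpha$'' to ``$\dim Q_x\approx\alpha$'' — this requires invoking $M$-invariance of $Q_x$ and the subadditivity/telescoping of dyadic entropy, and checking that the exceptional scales contribute negligibly in the Cesàro average. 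The lifting-and-invariance argument in the first paragraph is routine and will be handled by citation.
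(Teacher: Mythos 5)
Your overall strategy coincides with the paper's: take a subsequential weak-$*$ limit of the empirical scenery distribution to obtain a CP-distribution $Q_x$ at $\mu$-a.e.\ $x$ (the paper cites \cite[Section 7.5, Theorem 7.1]{HS2012} for this), then transfer the entropy concentration coming from the upper-uniform-entropy-dimension hypothesis through the limit, and finish with the CP dimension formula $\dim Q=\int\frac{1}{l}H(\eta,\D_l)\,dQ(\eta)$. So the route is the right one.

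There is, however, a genuine gap. You assert that $\eta\mapsto H(\eta,\D_l)/l$ is a bounded \emph{continuous} function on $\cP([0,1]^d)$, and you rely on this to pass the Ces\`aro-averaged entropy bounds to $Q_x$. This is false: the dyadic entropy is discontinuous in the weak-$*$ topology at measures that charge a face of a level-$l$ dyadic cell. For example, on $[0,1]$ take $\eta_n=\frac{1}{2}\delta_{1/2-1/n}+\frac{1}{2}\delta_{1/2+1/n}\rightharpoonup\delta_{1/2}$; then $H(\eta_n,\D_1)=1$ for all large $n$ while $H(\delta_{1/2},\D_1)=0$. Without a remedy, weak-$*$ convergence of the empirical distributions does not give convergence of $\int H(\eta,\D_l)\,d(\,\cdot\,)$, and your transfer step collapses. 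The paper handles this precisely by replacing $H(\eta,\D_l)$ with the translate-averaged entropy $f(\eta)=\int_{[0,1]^d}H(\eta(\cdot+t),\D_l)\,dt$, which is continuous (for Lebesgue-a.e.\ $t$ the shifted measure charges no dyadic boundary) and satisfies $|f(\eta)-H(\eta,\D_l)|=O_d(1)$ by Lemma \ref{lemma:almost-continuity-entropy:1}(2), a negligible error once $l$ is large. You should insert the same smoothing before taking the weak-$*$ limit. Relatedly, in your last step be aware that the approximate identity $\int\dim\eta\,dQ_x(\eta)\approx\alpha$ alone does not yield concentration of $\dim\eta$ near $\alpha$; what one actually uses, after the smoothing above, is that the density-$(1-\epsilon)$ hypothesis pushes through to show that $Q_x$ gives mass $\ge 1-O(\epsilon)$ to a closed set of $\eta$ with $f(\eta)/l$ close to $\alpha$, and this, combined with the ergodic decomposition of $Q_x$ and exact dimensionality of CP-typical measures, delivers the stated conclusion. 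Spell that out rather than leaving it as ``a further Markov/Egorov step.''
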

\begin{proof}
By definition of upper uniform entropy dimension, for any $\epsilon>0$, there exist arbitrarily large  $l\in \N$ such that 
\begin{equation}\label{eq:proof:lemma:unif-dim-meas-generate-unif-dim-CP-dist:1}
\limsup_{n\to\infty}\mu\left( x:\frac{1}{n}\left| \left\{ 1\le k\le n: \left|H(\mu^{\D_k(x)},\D_l)-l\alpha\right| \le l \epsilon \right\} \right| \ge n(1-\epsilon) \right) \ge 1-\epsilon.
\end{equation}
Let 
\[
A_n=\left\{ x:\frac{1}{n}\left| \left\{ 1\le k\le n: \left|H(\mu^{\D_k(x)},\D_l)-l\alpha\right| \le l \epsilon \right\} \right| \ge n(1-\epsilon)  \right\}.
\]
Then there exists subsequence $(n_k)_k\subset \N$ such that $\mu(A_{n_k})>1-2\epsilon$ for all  $k\ge 1$.  Let 
\[
A'=\limsup_{k\to\infty} A_{n_k}.
\]
Thus we have 
\[
\mu(A')=\mu\left( \cap_{N\ge 1}\cup_{k\ge N} A_{n_k} \right)\ge 1-2\epsilon.
\]
For each $x\in A'$, there exist infinite sequence $(m_k(x))_k\subset (n_k)_k$ such that  $x\in A_{m_k(x)}$ for all $k\ge 1$.  For each $x\in A'$, let us fix a distribution $Q_x$, which is limit point  of the sequence 
\begin{equation}\label{eq: proof:lemma:unif-dim-meas-generate-unif-dim-CP-dist:2}
\left( \frac{1}{m_k(x)}\sum_{n=1}^{m_k(x)} \delta_{\{\mu^{\D_n(x)}\}} \right)_{k\ge 1}.
\end{equation}
By \cite[Section 7.5, Theorem 7.1]{HS2012}, we know that for $\mu$-a.e.  $x\in A$, $Q_x$ is a CP-distribution.  Note that the dimension of a CP-distribution $Q$ is given by the following formula (see \cite[Proposition 6.26]{Hochman-lecture-notes2014})
\begin{equation}\label{eq: proof:lemma:unif-dim-meas-generate-unif-dim-CP-dist:3}
\dim Q= \int \frac{1}{l} H(\eta,\D_l)dQ(\eta) \ \  \textrm{for any } l\ge 1.
\end{equation}
Recall that the dimension of $Q$ is also defined as 
\begin{equation}\label{eq: proof:lemma:unif-dim-meas-generate-unif-dim-CP-dist:3.1}
\dim Q = \int \dim\eta dQ(\eta).
\end{equation}
Since $Q_x$ is a limit point of \eqref{eq: proof:lemma:unif-dim-meas-generate-unif-dim-CP-dist:2}, up to taking a subsequence of $(m_k(x))_k$, we may assume 
\begin{equation}\label{eq: proof:lemma:unif-dim-meas-generate-unif-dim-CP-dist:4}
\frac{1}{m_k(x)}\sum_{n=1}^{m_k(x)} \delta_{\{\mu^{\D_n(x)}\}}  \rightharpoonup  Q_x \textrm{ as } k\to \infty.
\end{equation}
By definition of $(A_{m_k(x)})_k$, we have for each $k\ge 1$, 
\begin{equation}\label{eq: proof:lemma:unif-dim-meas-generate-unif-dim-CP-dist:5}
\frac{1}{m_k(x)}\left| \left\{ 1\le k\le m_k(x): \left|H(\mu^{\D_k(x)},\D_l)-l\alpha\right| \le l \epsilon \right\} \right| \ge m_k(x)(1-\epsilon)
\end{equation}
Define $f:\cP(\R^d)\to \R$ by setting
\[
f(\eta)=\int_{[0,1]^d}H\left(\eta(\cdot+t),\D_l\right)dt.
\]
Since for $\mathcal{L}^d$-a.e.  $t\in [0,1]^d$, the measure $\eta(\cdot+t)$ gives zero mass to the boundaries of elements of $\D_l$, the function $f$ is continuous.  This is the advantage of considering $f$ instead of the function $\eta\mapsto H(\eta,\D_l)$.  By Lemma \ref{lemma:almost-continuity-entropy:1} (2), we know that 
\begin{equation}\label{eq: proof:lemma:unif-dim-meas-generate-unif-dim-CP-dist:6}
\left|f(\eta)-H(\eta,\D_l)\right| = O_d(1). 
\end{equation}
By the weak convergence \eqref{eq: proof:lemma:unif-dim-meas-generate-unif-dim-CP-dist:4},  we get 
\[
\lim_{k\to\infty}\frac{1}{m_k(x)}\sum_{n=1}^{m_k(x)}f(\mu^{\D_n(x)})  = \int f(\eta)  dQ_x(\eta).
\]
By \eqref{eq: proof:lemma:unif-dim-meas-generate-unif-dim-CP-dist:5} and \eqref{eq: proof:lemma:unif-dim-meas-generate-unif-dim-CP-dist:6},  we have
\[
\frac{1}{m_k(x)}\sum_{n=1}^{m_k(x)}f(\mu^{\D_n(x)})  \ge \alpha-O_d(\frac{1}{l})-o_\epsilon(1).
\]
It follows that we have
\[
\int \frac{1}{l} H(\eta,\D_l)dQ_x(\eta) \ge  \alpha-O_d(\frac{1}{l})-o_\epsilon(1).
\]
Combining this with \eqref{eq: proof:lemma:unif-dim-meas-generate-unif-dim-CP-dist:3} and \eqref{eq: proof:lemma:unif-dim-meas-generate-unif-dim-CP-dist:3.1},   we deduce that there exists $\epsilon'=O_d(\frac{1}{l})+o_\epsilon(1)$ such that 
\[
Q_x\left(\eta: |\dim \eta-\alpha|<\epsilon'\right)>1-\epsilon'.
\]
Putting together the above arguments, we thus obtain the desired conclusion.
\end{proof}

\begin{proof}[Proof of Theorem \ref{thm:main-proj-upper-unif-entropy-dim}.]
Let $\mu$ be a measure satisfying the assumptions of Theorem \ref{thm:main-proj-upper-unif-entropy-dim}.  Suppose, towards a contradiction that the set \eqref{eq:thm:main-proj-upper-unif-entropy-dim:1} is uncountable.  One may check that the set \eqref{eq:thm:main-proj-upper-unif-entropy-dim:1} is a Borel set.  Then for small enough $\delta>0$, the following set is still uncountable and it supports a non-atomic Borel probability measure $\nu$. 
\[
E=\{\pi\in G(2,1): \dimP \pi\mu<\min (1,\alpha)-\delta\}.
\]
Recall that we have the following  characterization for Packing dimension of measures (see Section \ref{section:notation}, \eqref{eq:chara-packing-dim-meas:1}): for $\eta\in \cP(\R^d)$,  
\begin{equation}\label{eq: Packing-dim-characterization:1}
\dimP \eta = \ess-inf_{x\sim \eta} \overline{D}(\eta,x).
\end{equation}
It follows that for each $\pi\in E$, there exists $F_\pi\subset \R^2$  with $\mu(F_\pi)>0$ and
\begin{equation}\label{eq:proof:thm-proj-upp-uni-dim-by-proj-cp-dist: 2}
\overline{D}(\pi\mu,\pi(x)) \le \min(1,\alpha)-\delta \textrm{ for } x\in F_\pi.
\end{equation} 
Let us recall a useful fact from \cite[Theorem 5.4]{HS2012}: there exists absolute constant $c>0$, depending only on $\R^2$, such that for any $\eta\in \cP(\R^2)$ and $\pi\in G(2,1)$,   we have
\begin{equation}\label{eq:proof:thm-proj-upp-uni-dim-by-proj-cp-dist: 3}
\limsup_{n\to\infty} \frac{1}{n}\sum_{k=1}^n \frac{1}{l}H(\pi(\eta^{\D_k(x)}),\D_l) \le \overline{D}(\pi\eta,\pi(x)) +\frac{c}{l} \textrm{ for all } l\ge 1.  
\end{equation} 
From \eqref{eq:proof:thm-proj-upp-uni-dim-by-proj-cp-dist: 2} and \eqref{eq:proof:thm-proj-upp-uni-dim-by-proj-cp-dist: 3},  it follows that, assuming $l_0$ is large enough so that $c/l_0\le \delta/2$,  for each $\pi\in E$, there exists $F_\pi$ with $\mu(F_\pi)>0$ and for each $x\in F_\pi$,
\begin{equation}\label{eq:proof:thm-proj-upp-uni-dim-by-proj-cp-dist: 4}
\limsup_{n\to\infty} \frac{1}{n}\sum_{k=1}^n \frac{1}{l}H(\pi(\mu^{\D_k(x)}),\D_l) \le \min(1,\alpha)-\frac{\delta}{2} \  \textrm{ for all } l\ge l_0.  
\end{equation} 
By Fubini's theorem and Egorov's theorem,  there exist $\epsilon_0>0$ and $F\subset \R^2$ with $\mu(F)>\epsilon_0$ such that for each $x\in F$ there exists $E^0_x\subset E$ with $\nu(E^0_x)>\epsilon_0$ and for all $\pi\in E^0_x$,  the inequalities \eqref{eq:proof:thm-proj-upp-uni-dim-by-proj-cp-dist: 4} hold. 

Now, let us fix $\epsilon_1$ with $0<\epsilon_1<\epsilon_0/2$.  Applying Lemma \ref{lemma:unif-dim-meas-generate-unif-dim-CP-dist},  we obtain a set  $A\subset \R^2$ with $\mu(A)>1-\epsilon_1$ such that for each $x\in A$, there exist $(m_k(x))_k\subset \N$ and CP-distribution $Q_x$ such that 
\begin{equation}\label{eq:proof:thm-proj-upp-uni-dim-by-proj-cp-dist: 5}
\frac{1}{m_k(x)}\sum_{n=1}^{m_k(x)} \delta_{\{ \mu^{\D_n(x)} \}}  \rightharpoonup  Q_x \  \textrm{ as } k\to \infty
\end{equation} 
and
\begin{equation}\label{eq:proof:thm-proj-upp-uni-dim-by-proj-cp-dist: 6}
Q_x\left(\eta: |\dim \eta-\alpha|<\epsilon_1\right)>1-\epsilon_1.
\end{equation} 
Let us fix such  $x\in A$,  $(m_k(x))_k\subset \N$ and $Q_x$.   We know that (see \cite[theorem 1.22]{Hochman2010-fractal-distributions}) for any $\pi\in G(2,1)$,  for $Q_x$-a.e.  $\eta$,   the measure $\pi\eta$ is exact dimensional. Thus by Egorov's theorem, there exist $A'\subset A$ and $l_1\in \N$ with $\mu(A')\ge \mu(A)-\epsilon_1$ such that for any $x\in A'$, there exists $E^1_x\subset G(2,1)$ with $\nu(E^1_x)>1-\epsilon_1$ and  for each  $\pi\in E^1_x$ there exists $D_x^1\subset \cP(\R^2)$ so that $Q_x(D_x^1)>1-\epsilon_1$ and for each $\eta\in D^1_x$,  we have
\begin{equation}\label{eq:proof:thm-proj-upp-uni-dim-by-proj-cp-dist: 7}
\left| \dim \pi\eta -\frac{1}{l}H(\pi\eta,\D_l) \right| <\epsilon_1 \ \textrm{ for } \eta\in D_x^1,  l\ge l_1.
\end{equation}
Define  $f:\cP(\R^2)\to \R$ by setting
\[
f(\eta)=\int_{[0,1]^2}\frac{H\left(\pi(\eta(\cdot+t)),\D_l\right)}{l}dt.
\]
Then the  function $f$ is continuous.  By Lemma \ref{lemma:almost-continuity-entropy:1} (2), we know that for each $\eta\in \cP(\R^2)$, 
\begin{equation}\label{eq:proof:thm-proj-upp-uni-dim-by-proj-cp-dist: 8}
\left|f(\eta)-\frac{1}{l}H(\pi\eta,\D_l)\right| = O\left(\frac{1}{l}\right). 
\end{equation}
By the weak convergence \eqref{eq:proof:thm-proj-upp-uni-dim-by-proj-cp-dist: 5}, we have 
\begin{equation}\label{eq:proof:thm-proj-upp-uni-dim-by-proj-cp-dist: 9}
\lim_{k\to\infty}\frac{1}{m_k(x)}\sum_{n=1}^{m_k(x)}   f(\mu^{\D_n(x)})  =  \int  f(\eta) d Q_x(\eta).
\end{equation}
By \eqref{eq:proof:thm-proj-upp-uni-dim-by-proj-cp-dist: 8}, we get 
\begin{equation}\label{eq:proof:thm-proj-upp-uni-dim-by-proj-cp-dist: 10}
\liminf_{k\to\infty}\frac{1}{m_k(x)}\sum_{n=1}^{m_k(x)}  \frac{1}{l}H(\pi(\mu^{\D_n(x)}),\D_l)   \ge   \int  \frac{1}{l}H(\pi\eta,\D_l) d \ Q_x(\eta) - O\left(\frac{2}{l}\right).
\end{equation}
Let $A''=A'\cap F$. Then 
\[\mu(A'')>\epsilon_0-\epsilon_1-\epsilon_1>0.\] 
In the following, we fix $l\ge \max(l_0,l_1)$.
By \eqref{eq:proof:thm-proj-upp-uni-dim-by-proj-cp-dist: 4} and \eqref{eq:proof:thm-proj-upp-uni-dim-by-proj-cp-dist: 10},  we get that for each $\pi\in E_x^0$, 
\begin{equation}\label{eq:proof:thm-proj-upp-uni-dim-by-proj-cp-dist: 11}
\int  \frac{1}{l}H(\pi\eta,\D_l) \ d Q_x(\eta) \le \min(1,\alpha)-\frac{\delta}{2} + O\left(\frac{2}{l}\right).
\end{equation}
By \eqref{eq:proof:thm-proj-upp-uni-dim-by-proj-cp-dist: 7},  we know that for any $\pi\in E_x^1$,  we have
\begin{equation}\label{eq:proof:thm-proj-upp-uni-dim-by-proj-cp-dist: 12}
\int  \frac{1}{l}H(\pi\eta,\D_l) \ d Q_x(\eta) \ge \int \dim \pi\eta dQ_x(\eta)- o_{\epsilon_1}(1).
\end{equation}
Combining \eqref{eq:proof:thm-proj-upp-uni-dim-by-proj-cp-dist: 11} and \eqref{eq:proof:thm-proj-upp-uni-dim-by-proj-cp-dist: 12},  we obtain that for each $\pi\in  E_x^0\cap  E_x^1$,  we have 
\[
 \int \dim \pi\eta \ dQ_x(\eta)\le  \min(1,\alpha)-\frac{\delta}{2} + O\left(\frac{2}{l}\right) +o_{\epsilon_1}(1).
\]
We may assume that $l_0, l_1$ has been chosen large enough and $\epsilon_1$ small enough so that $O\left(\frac{2}{l}\right) +o_{\epsilon_1}(1)<\delta/4$.  Thus we have
\begin{equation}\label{eq:proof:thm-proj-upp-uni-dim-by-proj-cp-dist: 13}
 \int \dim \pi\eta \  dQ_x(\eta)\le  \min(1,\alpha)-\frac{\delta}{4} \ \textrm{ for } \pi\in E_x^0\cap E_x^1.
\end{equation}
Note that we have 
\begin{equation}\label{eq:proof:thm-proj-upp-uni-dim-by-proj-cp-dist: 14}
 \nu(E_x^0\cap E_x^1)\ge \epsilon_0-\epsilon_1.
\end{equation}
Recall that we have $0<\epsilon_1<\epsilon_0/2$.
Let $Q_x=\int Q_x^\omega \ d\tau(\omega)$ be the ergodic decomposition of $Q_x$. Then we have 
\begin{equation}\label{eq:proof:thm-proj-upp-uni-dim-by-proj-cp-dist: 15}
\int \dim \pi\eta \ dQ_x(\eta) = \int \left(\int \dim \pi\eta \ dQ_x^\omega(\eta)\right)  d\tau(\omega).
\end{equation}
we also have 
\begin{equation}\label{eq:proof:thm-proj-upp-uni-dim-by-proj-cp-dist: 16}
\int \dim  \eta \ dQ_x(\eta) = \int \left(\int \dim  \eta \ dQ_x^\omega(\eta)\right)  d\tau(\omega).
\end{equation}
Combining \eqref{eq:proof:thm-proj-upp-uni-dim-by-proj-cp-dist: 6},  \eqref{eq:proof:thm-proj-upp-uni-dim-by-proj-cp-dist: 13}, \eqref{eq:proof:thm-proj-upp-uni-dim-by-proj-cp-dist: 14}, \eqref{eq:proof:thm-proj-upp-uni-dim-by-proj-cp-dist: 15} and \eqref{eq:proof:thm-proj-upp-uni-dim-by-proj-cp-dist: 16},  we obtain that,  assuming $\epsilon_1$ is small enough in terms of $\delta$,  there exist $G$ with $\tau(G)>0$ and $\delta'>0$ such that for each $\omega\in G$, there exist $E_2^\omega\subset G(2,1)$ with $\nu(E_2^\omega)>\delta'$, such that 
\[
\dim \pi Q_x^\omega  = \int \dim \pi \eta \ dQ_x^\omega(\eta) \le \min(1,\dim Q_x^\omega)-\delta' \textrm{ for } \omega\in G,  \pi\in E_2^\omega.
\]
Since $\nu$ is non-atomic, the set $E_2^\omega$ is uncountable,  thus we obtain a contradiction to Theorem \ref{thm:proj-thm-cp-distribution:-2}.
\end{proof}

\subsection{Proof of Theorem \ref{thm:proj-Assouad:-1}}\label{subsection:Proj-Assouad}

Before giving the proof, we need some preparations.  Recall that for $D\in \D_k(\R^2)$,  $S_{D}$ is the unique orientation-preserving homothety sending $D$ to $[0,1)^2$.  Let $F\subset \R^2$.  We say $F'$ is s micro set of $F$ if there exist a sequence $(k_j)_j\subset \N$ and $D_j\in \D_{k_j}(\R^2)$ such that 
\[d_{\rm H}(F',D_j\cap F)\to  0 \ \textrm{ as } j\to\infty,\]
where $d_{\rm H}$ stands for the Hausdorff distance. 

We will use the following well known facts about Assouad dimension.
\begin{lemma}[\cite{Furstenber2008,Fraser-book}]\label{lemma:proof-proj-Assouad:1}
\begin{itemize}
\item[(1)] Let $F\subset \R^d$.  For any micro set $F'$ of $F$, we have $\dim_{\rm A}F\ge \dim_{\rm A}F'$.
\item[(2)] Let $F\subset \R^d$. There is a micros set $F'$ of $F$ such that $F'$ supports a probability measure $\mu$ and for $\mu$-a.e.  $x$, the scenery sequence of $\mu$ at $x$ generates an ergodic CP-distribution $Q$ with $\dim Q=\dim_{\rm A}F$.
\end{itemize}
\end{lemma}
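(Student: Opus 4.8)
\medskip
\noindent\textbf{Proof proposal.}\quad Part (1) is a purely combinatorial covering estimate, which I would dispatch first. Fix any $s>\dim_{\rm A}F$, so that there is $C>0$ with $N_r(B(x,R)\cap F)\le C(R/r)^s$ for all $x\in\R^d$ and $0<r\le R$. Let $F'$ be a micro set, say $S_{D_j}(D_j\cap F)\to F'$ in $d_{\rm H}$ with $D_j\in\D_{k_j}(\R^d)$. Given $x'\in F'$ and $0<r\le R\le1$, for $j$ large the Hausdorff distance between $F'$ and $S_{D_j}(D_j\cap F)$ is $<r/4$, whence $N_r(B(x',R)\cap F')\le N_{r/2}\bigl(B(x',2R)\cap S_{D_j}(D_j\cap F)\bigr)$. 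Because $S_{D_j}$ is a homothety of ratio $2^{k_j}$, pulling the right-hand side back through $S_{D_j}$ turns it into $N_{2^{-k_j}r/2}\bigl(B(y_j,2^{-k_j}\cdot2R)\cap F\bigr)\le C(8R/r)^s$ for the $S_{D_j}$-preimage $y_j$ of $x'$. Thus $\dim_{\rm A}F'\le s$, and letting $s\downarrow\dim_{\rm A}F$ proves (1). The identical computation shows that $\dim_{\rm A}$ does not increase under passage to a weak tangent obtained by an arbitrary homothety, which is the form actually needed when applying the lemma to projections.

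For Part (2), write $s=\dim_{\rm A}F$; the plan is to realise $s$ as the dimension of an ergodic CP-distribution carried by measures supported on micro sets of $F$, following the ``CP-process'' philosophy of Furstenberg \cite{Furstenber2008} (see also \cite{Fraser-book}). By the definition of the Assouad dimension, for each $n$ I would choose $x_n\in F$ and scales $R_n\ge r_n$ with $R_n/r_n\to\infty$ and $N_{r_n}(B(x_n,R_n)\cap F)\ge(R_n/r_n)^{s-1/n}$. Rescaling $B(x_n,R_n)$ to $[0,1]^d$ produces a set $A_n\subset[0,1]^d$ (an affine copy of a piece of $F$) together with a maximal $2^{-m_n}$-separated subset of cardinality $\ge2^{m_n(s-1/n)-O(1)}$, where $2^{-m_n}\approx r_n/R_n$; let $\mu_n$ be the normalised counting measure on that subset, so that $\tfrac1{m_n}H(\mu_n,\D_{m_n})\ge s-o(1)$. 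Let $\Theta_n$ be the (adapted) distribution of $(\mu_n,x)$ with $x\sim\mu_n$, set $Q_n=\tfrac1{m_n}\sum_{k=0}^{m_n-1}(M^k)_*\Theta_n$, and let $Q$ be a weak-$*$ subsequential limit of $(Q_n)$. A Krylov--Bogolyubov argument shows $Q$ is $M$-invariant; since $M$, convex combinations, and weak-$*$ limits all preserve adaptedness, $Q$ is adapted, hence a CP-distribution. Moreover the family of measures supported on micro sets of $F$ is, via a diagonal argument, closed under $M$ and under weak-$*$ limits, so $Q$-a.e.\ $\eta$ is supported on a micro set of $F$.

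Next I would compute $\dim Q$. Using the decomposition $H(\mu_n,\D_{m_n})=\sum_{k=0}^{m_n-1}\sum_{Q'\in\D_k}\mu_n(Q')\,\tfrac1l H(\mu_n^{Q'},\D_l)+O(l)$ of \cite[Lemma 3.4]{Hochman2014} and the identity $\int\tfrac1lH(\eta,\D_l)\,dQ_n(\eta)=\tfrac1{m_n}\sum_{k=0}^{m_n-1}\int\tfrac1lH(\mu_n^{\D_k(x)},\D_l)\,d\mu_n(x)$, one gets $\int\tfrac1lH(\eta,\D_l)\,dQ_n(\eta)\ge s-o(1)$ for each fixed $l$. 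Passing to the limit through the continuous surrogate $f(\eta)=\int_{[0,1]^d}H(\eta(\cdot+t),\D_l)\,dt$, which satisfies $|f(\eta)-H(\eta,\D_l)|=O(1)$ by Lemma \ref{lemma:almost-continuity-entropy:1} (exactly as in the proof of Lemma \ref{lemma:unif-dim-meas-generate-unif-dim-CP-dist}), yields $\dim Q=\int\tfrac1lH(\eta,\D_l)\,dQ(\eta)\ge s-O(1/l)$ for every $l$, hence $\dim Q\ge s$; conversely, for $Q$-a.e.\ $\eta$ one has $\dim\eta\le\dimH({\rm supp}\,\eta)\le\dim_{\rm A}({\rm supp}\,\eta)\le s$, so $\dim Q\le s$, and therefore $\dim Q=s$. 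In the ergodic decomposition $Q=\int Q^\omega\,d\tau(\omega)$ every $Q^\omega$ is again carried by measures on micro sets of $F$, so $\dim Q^\omega\le s$; since $\int\dim Q^\omega\,d\tau=s$, we get $\dim Q^\omega=s$ for $\tau$-a.e.\ $\omega$. Fixing such an ergodic CP-distribution $Q'$, the pointwise ergodic theorem for $(M,Q')$ together with adaptedness shows that $Q'$-a.e.\ $\eta$ is uniformly scaling with scenery generating $Q'$; choosing one such $\eta=:\mu$ and letting $F'$ be a micro set of $F$ containing ${\rm supp}\,\mu$ completes the proof.

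The delicate point is the middle step: one must set up the magnification construction carefully enough that the limiting distribution is genuinely adapted and genuinely carried by micro sets of $F$ (the dyadic grid is fixed throughout, which keeps ``magnification'' consistent with the definition of a micro set, but the closure-under-weak-limits claims require the diagonal arguments above and some control of grid-boundary effects, cf.\ \cite{HS2012,Hochman2010-fractal-distributions}), and that the near-maximal entropy $\tfrac1{m_n}H(\mu_n,\D_{m_n})\ge s-o(1)$ at the coarsest scale really does descend, on $\mu_n$-average, to all intermediate scales — which is precisely what the Cesàro averaging over $M^k$ achieves. Once these are in place, the remainder is routine bookkeeping with the CP-distribution toolkit summarised in Section \ref{subsection: properties-of-CP-dist:1}, which is why this lemma is customarily quoted rather than reproved.
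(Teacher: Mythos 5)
The paper does not prove this lemma; it is quoted as a known result from Furstenberg and Fraser's book, so there is no internal argument to compare against. Your reconstruction is sound and follows exactly the route those sources take: Part (1) is the standard covering estimate (your constant $8R/r$ could be tightened to $4R/r$, but that is irrelevant), and Part (2) is the Furstenberg/CP-chain construction — extract near-extremal dyadic configurations witnessing $\dim_{\rm A}F$, build high-entropy discrete measures $\mu_n$, take Cesàro averages of the magnification orbit, pass to a weak-$*$ limit, use Hochman's local-entropy decomposition to push the top-scale entropy down to $\int \frac1l H(\eta,\D_l)\,dQ$, and squeeze $\dim Q=s$ using Part (1) as the upper bound before passing to ergodic components. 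The two genuine technical points are the ones you flag yourself: the map $M$ is discontinuous on grid boundaries, so Krylov--Bogolyubov and the continuity claims need the smoothing/averaging devices from \cite{HS2012,Hochman2010-fractal-distributions} (your surrogate $f(\eta)=\int H(\eta(\cdot+t),\D_l)\,dt$ is precisely the right tool); and the closure of ``supported on a microset of $F$'' under $M$ and under weak-$*$ limits requires a diagonal extraction together with the elementary fact that $\eta_j\to\eta$ weakly and $\spt\eta_j\subset A_j\to A$ in $d_{\rm H}$ forces $\spt\eta\subset A$. One small mismatch worth noting: you start from balls $B(x_n,R_n)$, whereas the paper's definition of micro set uses dyadic cells $D_j$; to land squarely inside that definition you should run the extremal-configuration argument with dyadic cells (which is harmless, since $\dim_{\rm A}$ is unchanged if defined via dyadic cells at comparable scales), otherwise the measures $\mu_n$ are not a priori carried on rescaled dyadic pieces of $F$ and the ``closed under $M$'' claim does not literally apply.
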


Now we are ready to prove Theorem \ref{thm:proj-Assouad:-1}.
\begin{proof}[Proof of Theorem \ref{thm:proj-Assouad:-1}]
By Lemma \ref{lemma:proof-proj-Assouad:1} (2), there exists a micro set $F'$ of $F$ such that one can find $\mu\in \cP(F')$ satisfying for $\mu$-a.e.  $x$,  
\[\lim_{n\to\infty} \frac{1}{n}\sum_{k=1}^n\delta_{\mu^{\D_k(x)}}  \ = \  Q \]
for some ergodic CP-distribution $Q$ with $\dim Q\ge \dim_{\rm A} F$. By \cite[Theorem 8.1]{HS2012},  we have for any $\pi\in G(2,1)$, 
\begin{equation}\label{eq:pf-thm-assouad-proj:1}
\dimH \pi\mu\ge \dim \pi Q.
\end{equation}
By Theorem \ref{thm:proj-thm-cp-distribution:-2},  we have 
\begin{equation}\label{eq:pf-thm-assouad-proj:2}
\dim \pi Q = \min(1,\dim Q)
\end{equation}
for all $\pi\in G(2,1)\setminus E$ where $E$ is at most countable.  By Lemma \ref{lemma:proof-proj-Assouad:1} (1),  we have for each $\pi\in G(2,1)$,
\begin{equation}\label{eq:pf-thm-assouad-proj:3}
\dim_{\rm A}\pi(F) \ge \dim_{\rm A}\pi(F').
\end{equation}
Since $\dim_{\rm A}\pi(F')\ge \dimH \pi\mu$,   by combining \eqref{eq:pf-thm-assouad-proj:1}, \eqref{eq:pf-thm-assouad-proj:2} and \eqref{eq:pf-thm-assouad-proj:3},   we obtain the desired conclusion

\end{proof} 
\begin{remark}
Theorem \ref{thm:proj-Assouad:-1} is sharp: there exists a self-similar set $K\subset \R^2$ satisfying the open set condition (thus $K$ is Alfhors-David regular) such that $\{\pi\in G(2,1): \dim_{\rm A}\pi K<\min(1\dim_{\rm A}K)\}$ is dense in $G(2,1)$ (hence at least countable). Here is such an example: consider the $\frac{1}{4}$-four-corner Cantor set $K\subset [0,1]^2$ generated by the IFS 
\[
\F=\l\{(x,y)\mapsto \l(\frac{x}{4},\frac{x}{4}\r)+\l(\frac{a}{4},\frac{b}{4}\r): (a,b)\in \{(0,0),(3,0),(0,3),(3,3)\}\r\}=\{f_i\}_{i\in\Lambda}.
\]
Then $K$ satisfies the open set condition and $\dimH K=1$.  Thus $K$ is Alfhors-David regular and $\dim_{\rm A}K=\dim_{\rm H}K=1$.  It is well known that the set $K$ is purely 1-unrectifiable.  By a classical projection theorem  of Besicovitch we know that $\mathcal{L}(\pi K)=0$ for $\mathcal{L}$-a.e.  $\pi\in G(2,1)$.  

Let us fix any $\pi\in G(2,1)$ such that $\mathcal{L}(\pi K)=0$.   We will show that for any $\epsilon>0$, there exists $\pi'\in G(2,1)$ with $0<|\pi-\pi'|<\epsilon$ such that
\[
\dim_{\rm A}\pi'K<\dim_{\rm A} K=1.
\]
Since $\mathcal{L}(\pi K)=0$, for any (large) $M\in \N$, there exist large enough $n\in\N$ such that there exist distinct $I_1,\cdots, I_M\in \Lambda^n$ satisfying 
\[
\pi (f_{I_i}([0,1]^2))\bigcap \pi (f_{I_j}([0,1]^2))\neq \emptyset \ \textrm{ for all } 1\le i,j\le M.
\]
Observe that we have
\[
\dist\l(f_{I_i}([0,1]^2),f_{I_j}([0,1]^2)\r)\ge \frac{1}{4^n}\  \textrm{ for all } i\neq j.
\]
Thus there must exist $1\le i,j\le M$ with $i\neq j$ such that 
\[
\dist\l(f_{I_i}([0,1]^2),f_{I_j}([0,1]^2)\r)\ge \frac{1}{4^n}\cdot \frac{M}{2}.
\]
Let $\pi'\in G(2,1)$ be such that the projected sets $\pi'(f_{I_i}([0,1]^2))$ and $\pi'(f_{I_j}([0,1]^2))$ exactly coincide, that is, $\pi'(f_{I_i}([0,1]^2))=\pi'(f_{I_j}([0,1]^2))$.
Then $\pi'$ is rational  and $|\pi-\pi'|\le o_M(1)$.  Moreover, we have 
\[
\dimH \pi' K<\dimH K=1.
\]
Since $\pi'$ is rational, we know that the projected IFS $\F_{\pi'}=\{\pi' f_i\}_{i\in \Lambda}$ satisfies the weak separation condition (see e.g.  \cite{Ruiz}).  By a result of Fraser et al \cite{FHOR}, we know that the attractor of $\F_{\pi'}$, which is $\pi'(K)$, is Alfhors-David regular.  In particular, we have 
\[
\dim_{\rm A}\pi' K=\dim_{\rm H}\pi' K.
\]
Since we have seen that $\dim_{\rm H}\pi' K<1$, we have $\dim_{\rm A}\pi' K<\dim_{\rm A}K=1$. 
Thus we have proved that for any $\pi\in G(2,1)$ with $\mathcal{L}(\pi  K)=0$ and any $\epsilon>0$,  there exists $\pi' \in S^1$ with $0<|\pi-\pi'|<\epsilon$ such that
\[
\dim_{\rm A}\pi' K<\dim_{\rm A} K=1.
\]
Since $\mathcal{L}(\pi  K)=0$ for $\mathcal{L}$-a.e. $\pi\in G(2,1)$,  the set $E_{\rm A}(K)$ must be dense in $G(2,1)$.

\end{remark}

\medskip

\noindent{\textbf{Acknowledgements.}} I am grateful to Mike Hochman for his encouragement and valuable suggestions, which have significantly improved this paper. I also thank Pablo Shmerkin for his encouragement and Amir Algom for his comments on an earlier version.


\end{document}